\documentclass[12pt,a4paper]{amsart}
\usepackage{microtype}
\usepackage[margin=2.54cm]{geometry}
\usepackage[utf8]{inputenc}
\usepackage[T1]{fontenc}
\usepackage{lmodern}
\usepackage{amsfonts}
\usepackage{epsfig}
\usepackage{graphicx}
\usepackage{breqn}
\usepackage{amsmath}
\usepackage{amssymb}
\usepackage{latexsym}
\usepackage{rotating}
\usepackage{pstricks, pst-node, pst-text, pst-3d}
\usepackage{amsbsy}
\usepackage{subfig}
\usepackage{enumerate}
\usepackage[shortlabels]{enumitem}
\usepackage{amsthm}
\usepackage{mathrsfs}
\usepackage{hyperref}
\usepackage{textcomp}
\usepackage{color}
\usepackage{verbatim}
\usepackage{subfig}
\usepackage{afterpage}
\usepackage{pdflscape}
\usepackage{rotating}
\usepackage[super]{nth}
\usepackage{nag}
\usepackage{enumitem}
\usepackage{rotating}

\setcounter{MaxMatrixCols}{30}
\allowdisplaybreaks[1]

\newtheorem{theorem}{Theorem}
\newtheorem{proposition}{Proposition}

\newtheorem{lemma}{Lemma}

\newtheorem{definition}{Definition}
\newtheorem{remark}{Remark}

\theoremstyle{remark}
\newtheorem{example}{Example}
\numberwithin{equation}{section}

\renewcommand{\Re}{\,\rm{Re}\,}
\renewcommand{\Im}{\,\rm{Im}\,}
\renewcommand{\th}{\,\rm{th}}
\newcommand{\sh}{\,\rm{sh}}
\newcommand{\ch}{\,\rm{ch}}
\renewcommand{\th}{\,\rm{th}}
\newcommand{\cth}{\,\rm{cth}}
\newcommand{\tg}{\,\rm{tg}}

\DeclareMathOperator{\id}{id}

\DeclareMathOperator{\de}{\partial}
\DeclareMathOperator{\dS}{d^{\rm{S}}}
\DeclareMathOperator{\dI}{d^{\rm{It\^{o}}}}
\DeclareMathOperator{\Lc}{\mathcal{L}}
\DeclareMathOperator{\Ac}{\mathcal{A}}
\DeclareMathOperator{\Dc}{\mathcal{D}}

\DeclareMathOperator{\D}{\mathbb{D}}
\DeclareMathOperator{\C}{\mathbb{C}}
\DeclareMathOperator{\HH}{\mathbb{H}}

\DeclareMathOperator{\SSS}{\mathbb{S}}
\DeclareMathOperator{\LL}{\mathbb{L}}
\DeclareMathOperator{\Hc}{\mathcal{H}}

\DeclareMathOperator{\K}{\mathcal{K}}

\DeclareMathOperator{\map}{\rightarrow}
\DeclareMathOperator{\then}{~\Rightarrow~}

\DeclareMathOperator{\supp}{\rm{supp }}
\newcommand{\Ev}[1]{\mathbb{E}\left[{#1}\right]}
\newcommand{\Evv}[2]{\mathbb{E}_{#1}\left[{#2}\right]}

\begin{document}
\title{Coupling of Gaussian Free field with general slit SLE}
\author{Alexey Tochin}
\author{Alexander Vasil'ev}

\address{Department of Mathematics,
University of Bergen, P.O.~Box~7803, Bergen, N-5020, Norway}
\email{alexey.tochin@gmail.com}\email{alexander.vasiliev@math.uib.no}

\thanks{The authors were  supported by EU FP7 IRSES program STREVCOMS, grant no. PIRSES-GA-2013-612669, and by the grants of the Norwegian Research Council \#239033/F20}

\subjclass[2010]{30C35, 34M99, 60D05, 60G57, 60J67}

\keywords{L\"owner equation, stochastic flows, general L\"owner theory, SLE,  
conformal field theory, boundary conformal field theory, Gaussian free field, coupling}

\begin{abstract}
We consider a coupling of the Gaussian free field with  slit holomorphic stochastic flows,  called
($\delta,\sigma$)-SLE, which contains known SLE processes (chordal, radial, and dipolar) as
particular cases. In physical terms, we study a free boundary conformal field theory with one scalar bosonic field, where Green's function is assumed to have some general regular harmonic part. We establish which of these models allow coupling with ($\delta,\sigma$)-SLE, or equivalently, when the correlation functions induce  local ($\delta,\sigma$)-SLE martingales (martingale observables).
\end{abstract}

\maketitle

\tableofcontents

\section{Introduction}

\subsection{Simple example of coupling}

The paper is focused on the coupling of the Gaussian free field (GFF) with the ($\delta,\sigma$)-SLE, and studies in detail some special cases. Let us explain briefly  the case of the chordal $\text{SLE}_4$ and its coupling with the Gaussian free field subject to the Dirichlet boundary conditions, see  \cite{Kang2011} and \cite{Schramm2005} for details. We review some known generalizations, and then, explain  the main results of the paper. 

Let 
$G_t:\HH\setminus K_t\map \HH$
be a conformal map from a subset $\HH\setminus K_t$ of the upper half-plane 
$\HH:=\{z\in \mathbb{C}: \Im{z}>0\}$ 
onto $\HH$ defined by the  initial value problem for the stochastic differential equation
\begin{equation}
 \dI G_t(z) = \frac{2}{G_t(z)}dt - \sqrt{\kappa} \dI B_t,\quad
 G_0(z) = z,\quad
 t\geq 0,\quad \kappa>0,
 \label{Formula: Chordal SLE}
\end{equation} 
where $\dI$ denotes the It\^{o} differential and $B_t$ stands for the standard Brownian motion (we avoid writing $\circ$ for the Stratonovich integral because
of possible confuse with superposition used at the same time, so we explicitly write $\dS$ or $\dI$).  
This initial value problem is known as the chordal SLE whose solution $G_t$ is a random conformal map which defines a strictly growing random set 
$K_t:=\HH\setminus G_t^{-1}(\HH)$, called the SLE hull,
($K_t\subset K_s,~s>t$)
 bounded almost surely. In particular, it is a random simple curve for $\kappa\leq 4$ that generically tends to infinity as 
$t\map \infty$. 
It turns out that the random law defined this way is related to various problems of mathematical physics.

It is straightforward to check that the following random processes are local martingales for 
$\kappa=4$
\begin{equation*}\begin{split}
 {M_1}_t(z) :=& \arg G_t(z),\\
 {M_2}_t(z,w) : =& -\log\frac{|G_t(z)-G_t(w)|}{|G_t(z)-\overline{G_t(w)}|} + \arg G_t(z) \arg G_t(w),\\
 {M_3}_t(z,w,u) : =& 
 -\log\frac{|G_t(z)-G_t(w)|}{|G_t(z)-\overline{G_t(w)}|}\arg G_t(u) 
 + \arg G_t(z) \arg G_t(w)\arg G_t(u) 
 +\\+&
  \{z \leftrightarrow w \leftrightarrow u \},\\
 \dotso &  
\end{split}\end{equation*}
for $z,w,u\in\HH$ until a stopping time  $t_{z}$, which is geometrically defined by the fact that $K_t$ touches $z$ for the first time, i.e. 
$G_t(z)$ is no longer defined for $t>t_z$. The Wick pairing structure can be recognized  in this collection of martingales. It appears in higher moments of multivariant Gaussian distributions as well as in the correlation functions in the free quantum field theory.
We consider  
the \emph{Gaussian free field} (GFF)
$\Phi$ which 
is a random real-valued functional over smooth functions in $\HH$. 
Let us use the heuristic notation `$\Phi(z)$' ($z\in\HH$) until 
a rigorous definition is given in Section 
\ref{Section: Gaussian free field}.

For some special choice of GFF the moments are  
\begin{equation*}\begin{split}
 S_1(z):=\Ev{\Phi(z)} =& \arg{z},\\
 S_2(z,w):=\Ev{\Phi(z)\Phi(w)} =& 
 -\log\frac{|z-w|}{|z-\overline{w}|} + \arg z \arg w ,\\
 S_3(z,w,u) : =\Ev{\Phi(z)\Phi(w)\Phi(u)}= & 
 -\log\frac{|z-w|}{|z-\overline{w}|}\arg u 
 + \arg z \arg w \arg u + \{z \leftrightarrow w \leftrightarrow u \},\\
 \dotso & 
\end{split}\end{equation*}
Roughly speaking, $\Phi(z)$ is a Gaussian random variable for each $z$ with the expectation 
$S_1(z)$ 
proportional to  
$\arg{z}$, and with
the covariance 
$-\log\frac{|z-w|}{|z-\overline{w}|}$. 
It can be shown that the stochastic processes  
\begin{equation}
 {M_n}_t:=S_n(G_t(z_1),G_t(z_2),\dotso G_t(z_n))
 \label{Formula: M_n = S_n(G,G...)}
\end{equation}
are martingales for all $n=1,2,\dotso$ .

This fact is closely related to the observation that the random variable 
$\Phi(G_t(z))$
agrees in law with 
$\Phi(z)$, 
where 
$\Phi$ and $G_t$ are sampled independently. It was also obtained, see \cite{Miller2012}, that the properly defined zero-level line of the random distribution $\Phi(z)$ starting at the origin in the vertical direction agrees in law with $\text{SLE}_4$.

This connection, i.e.,  coupling, can be generalized
for an arbitrary $\kappa>0$, see \cite{Sheffield2010}.
In order to explain its geometric meaning let us associate another field $J(z)$ 
of unit vectors $|J(z)|= 1$ with  $\Phi(z)$ by 
\begin{equation*}
 J(z): = e^{i \Phi(z)/\chi}
\end{equation*}
for some $\chi>0$.
A unit vector field $J$ transforms from a domain $D$ to  $D\setminus K$ according to the rule
\begin{equation*}
 J(z)\map \tilde J(\tilde z) = \sqrt{ 
  \frac{G'(\tilde z)} {\overline{ G'(\tilde z)} } 
 } J(G(\tilde z)),
\end{equation*}  
where $G$ is the conformal map $G:D\setminus K \map D$ (the coordinate transformation).
Thereby, $\Phi(z)$ transforms as
\begin{equation}
 \Phi(z) \map \tilde \Phi(\tilde z) = \Phi(G(\tilde z)) 
  - \chi \, \rm{arg}\, {G}'(\tilde z).
 \label{Formula: Phi ->Phi - chi arg}
\end{equation}
The coupling  for $\kappa>0$ is the agreement in law of   
$\Phi(G_t(z))-\chi \arg G'(z)$
with
$\Phi(z)$, 
where
\begin{equation}
 \chi = 
  \frac{2}{\sqrt{\kappa}}-\frac{\sqrt{\kappa}}{2}.
 \label{Formula: chi = 2/k - k/2}
\end{equation}
Besides,  the flow line of 
$J(z)$ 
starting at the origin agrees in law with the SLE${}_{\kappa}$ curve.

The  statement above can be extended from 
the chordal equation 
(\ref{Formula: Chordal SLE})
to the radial equation, see \cite{Bauer2004,Kang2012a}, or
 to the dipolar one, see \cite{Bauer2004a,Kang,Kanga}, see also \cite{Izyurov2010} for the SLE-($\kappa,\rho$) case.

In \cite{Ivanov2014, IvanovKangVas}, we  considered slit holomorphic stochastic flows, called in this paper $(\delta,\sigma)$-SLE, 
which contain all the above mentioned SLEs  as special cases except SLE-($\kappa,\rho$). 
In the present paper, we consider a general case of the coupling of  $(\delta,\sigma)$-SLE with GFF. In particular, we study the known cases of the couplings in a systematic way as well as consider some new ones. 

There is also another type of coupling  described in \cite{Sheffield2010} for the chordal case. The usual forward L\"owner equation is replaced by its reverse version with the opposite sign at the drift term proportional to $dt$ in
(\ref{Formula: Chordal SLE}).
The corresponding solution is a conformal map
$G_t: \HH\map\HH\setminus K_t$,
and 
$\Phi(z)$ agrees in law with
$\Phi(G_t(z))+\gamma \log|G_t'(z)|$
for some real $\kappa$-dependent constant $\gamma$.

In addition to the coupling, there are also interesting connections to other aspects of conformal field theory such as the highest weight representations of the Virasoro algebra \cite{Bauer2004b}, and the vertex algebra \cite{Kang2011}.
On the other hand, the crossing probabilities, such as touching the real line by an SLE curve, are connected with the CFT stress tensor correlation functions, see \cite{Bauer2004b,Gruzberg2006}. Both of these directions, as well as  the reverse coupling, exceed the scope of this paper.

\subsection{($\delta,\sigma$)-SLE overview}

The ($\delta,\sigma$)-SLE is a unification of the well-known chordal, radial, and dipolar stochastic L\"owner equations described first in \cite{Ivanov2014}. We will give a rigorous definition in Section \ref{Section: Preliminaries} and will use a simplified version in Introduction. Let 
$D\subset\C$ is a simply connected hyperbolic domain. A ($\delta,\sigma$)-SLE or a \emph{slit holomorphic stochastic flow} is the solution to the stochastic differential equation
\begin{equation}
 \dS G_t(z) = \delta(G_t(z))dt + \sigma(G_t(z)) \dS B_t,\quad
 G_0(z)=z, \quad z\in D, \quad t\geq 0,
 \label{Formula: SLE in S form}
\end{equation}
where $B_t$ is the standard Brownian motion,
$\sigma\colon D\map \C$ and $\delta \colon D\map \C$ are some fixed holomorphic vector fields defined in such a way that
the solution $G_t$ is always a conformal map $G_t\colon D\setminus K_t\map D$ for some random curve-generated subset $K_t$.  The differential $\dS$ is the Stratonovich differential, which is more convenient in our setup than the more frequently used It\^{o} differential $\dI$, see for example \cite{Gardiner1982}. The same equation in the It\^{o} form is
\begin{equation}\begin{split}
 &\dI G_t(z) = \left(\delta(G_t(z))+\frac12 \sigma'(G_t(z))\sigma(G_t(z))\right)dt + \sigma(G_t(z)) \dI B_t,\\
 &G_0(z)=z,
 \quad t\geq 0,
 \label{Formula: SLE in It\^{o} form}
\end{split}\end{equation}
see Appendix 
\ref{Appendix: Some relations from stochastic calculus}
for details. Equation (\ref{Formula: SLE in It\^{o} form}) can be easily reformulated for any other domain $\tilde D\subset\C$ using the transition function 
$\tau\colon \tilde D\map D$. Define 
$\tilde G_t := \tau^{-1} \circ G_t \circ \tau$. The equation for $\tilde G_t$ is of the same form as 
(\ref{Formula: SLE in S form}), 
but with new fields $ \tilde\delta$ and $ \tilde\sigma$ defined as
\begin{equation*}\begin{split}
 \tilde \delta(\tilde z) = \frac{1}{\tau'(\tilde z)} \delta(\tau(\tilde z)),\quad
 \tilde \sigma(\tilde z) = \frac{1}{\tau'(\tilde z)} \sigma(\tau(\tilde z)).
\end{split}\end{equation*} 
The general form of $\delta(z)$ and $\sigma(z)$ for $D=\HH$ (the upper half-plane) is
\begin{equation}\begin{split}
 &\delta^{\HH}(z) = \frac{\delta_{-2}}{z} + \delta_{-1} + \delta_{0} z + \delta_{1} z^2, \\
 &\sigma^{\HH}(z) =  \sigma_{-1} + \sigma_{0} z + \sigma_{1} z^2, \\
 &\delta_{-1},~\delta_0,~\delta_1,~\sigma_{-1},~\sigma_0,~\sigma_1\in\mathbb{R},~\quad
 \sigma_{-1}\neq 0,\quad
 \delta_{-2}>0.
 \label{Formula: allowed delta and sigma}
\end{split}\end{equation}
The values of $\delta$ and $\sigma$ for the classical SLEs (in the upper half-plane) are summarized in the following table.
\bigskip

\begin{center}
\begin{tabular}{|c|c|c|}
 \hline 
 SLE type & $\delta^{\HH}(z)$ & $\sigma^{\HH}(z)$ \\ 
 \hline 
 Chordal & $2/z$ & $-\sqrt{\kappa}$ \\ 
 \hline 
 Radial & $1/(2z)+z/2 $ & $-\sqrt{\kappa}(1+z^2)/2$ \\ 
 \hline 
 Dipoloar & $1/(2z)-z/2 $ & $-\sqrt{\kappa}(1-z^2)/2$ \\ 
 \hline 
 ABP SLE, see \cite{Ivanov2012a}  & $1/(2z)$ & $-\sqrt{\kappa}(1+z^2)/2$ \\ 
 \hline 
\end{tabular} 
\end{center}
\bigskip

We use the half-plane formulation in this table just for simplicity. In order to obtain the commonly used form of the radial equation  in the unit disk
one  applies  the transition map
\begin{equation}
 \tau_{\HH,\D}(z):=i \frac{1-z}{1+z},\quad \tau_{\HH,\D} :\D \map\HH,
 \label{Formula: tau_H D}
\end{equation}
for the unit disk
$\D:=\{z\in \C\colon |z|<1\}$. 
For more details, see Example 
\ref{Example: Radial Leowner equation} 
and Section 
\ref{Section: Radial SLE with drift}.
The same procedure with the transition map
\begin{equation}
 \tau_{\HH,\mathbb{S}}(z)=\rm{th}\frac{z}{2},\quad \tau_{\HH,\SSS}:\mathbb{S} \map\HH,
 \label{Formula: tau_H S}
\end{equation}
for the strip $\SSS:=\{z\in\C~:~0< \Im z < \pi\}$,
gives the common form for the dipolar SLE, see also Section \ref{Section: Dipolar SLE with drift}.

It was shown in \cite{Ivanov2014}, that the choice 
of $\delta$ and $\sigma$ given by
(\ref{Formula: allowed delta and sigma}) guarantees that
the random set $K_t$ is curve-generated
similarly to the standard known $\rm{SLE}_{\kappa}$. Moreover, $K_t$ has the same local behaviour (the fractal dimension, dilute phases, and etc.). 

The main difference between the classical SLEs and the case of the general ($\delta,\sigma$)-SLE is the absence of  fixed normalization points, in general, e.g., in the classical cases the solution $G_t$ to the radial SLE equation always fixes an interior point ($0$ in the unit disk formulation), 
the dipolar SLE preserves two boundary points  
($-\infty$ and $+\infty$ in the strip coordinates $\SSS$), 
the chordal equation is normalized at the boundary point at infinity in the 
half-plane formulation.
The chordal case can be considered as the limiting case of the dipolar one as  both fixed boundary points collide, or the limiting case of the radial equation when the fixed interior point approaches  the boundary.
In the general case of ($\delta,\sigma$)-SLE, there is no such a simple normalization. Numerical experiments show that the curve (or the curve-generated hull for $\kappa\geq 4$) $K_t$ tends to a random point inside the disk as $t\to\infty$. A version of the domain Markov property still holds due to an autonomous form of equation 
(\ref{Formula: SLE in It\^{o} form}).

\subsection{Overview and purpose of this paper}

We consider the general form of 2-dimensional GFF $\Phi$ with some expectation
\begin{equation*}
	\eta(z):=\Ev{\Phi(z)},
\end{equation*}
and the covariance
\begin{equation*}
	\Gamma(z,w):=\Ev{\Phi(z)\Phi(w)}-\Ev{\Phi(z)}\Ev{\Phi(w)}.
\end{equation*}
We postulate that $\Gamma(z,w)$ is symmetric fundamental solution to the Laplace equation,
\begin{equation*}
	\triangle_z \Gamma(z,w) = 2\pi \delta(z,w),\quad \Gamma(z,w)=\Gamma(w,z), 
\end{equation*}
but  imposing no boundary conditions. In other words, $\Gamma(z,w)$ has the from
\begin{equation*}
	\Gamma(z,w)= -\frac12 \log|z-w|~+~\text{symmetric harmonic function of $z$ and $w$}.
\end{equation*}
We address the following question: which of so defined GFF can be coupled with some ($\delta,\sigma$)-SLE in the sense that $\Phi(z)$ and $\Phi(G_t(z))$ agree in law? In particular, we show 
(Theorem \ref{Theorem: The coupling theorem}) 
that this is possible if and only if the following system of partial differential equation is satisfied 
\begin{equation}
\begin{array}{c}
	\left( \Lc_{\delta}  + \frac12\Lc_{\sigma}^2 \right) \eta(z) = 0, \\
	\Lc_{\delta} \Gamma(z,w) + \Lc_{\sigma} \eta(z) \Lc_{\sigma} \eta(w) = 0, \\
	\Lc_{\sigma} \Gamma(z,w) = 0,
\end{array}
	\label{Formula: The main system}
\end{equation}
where,
following \cite{Kang2011},
 we understand under $\Lc$ a generalized version of the Lie derivatives defined by
\begin{equation*}
\begin{array}{c}
	\Lc_v \eta(z) := 
	v(z)\de_{z} + \overline{v(z)} \de_{\bar z} + \chi\, \rm{Im}\, v'(z),\\
	\Lc_v \Gamma(z,w) := 
	v(z)\de_{z} + \overline{v(z)} \de_{\bar z} 
	+ v(w)\de_{w} + \overline{v(w)} \de_{\bar w}.
\end{array}
\end{equation*}
Here, $v=\delta$ or $v=\sigma$ and $\chi$ refers to the SLE parameter $\kappa$ by
\eqref{Formula: chi = 2/k - k/2}. These equations generalize the corresponding relation from 
\cite{Sheffield2010}, see
the table on page 45, and they are versions of
(M-cond') and (C-cond') 
from \cite{Izyurov2010}.
The second equation 
in \eqref{Formula: The main system}
is known as a version of Hadamard's variation formula, and the third 
states that the covariance 
$\Gamma$ must be invariant with respect to the M\"obius automorphisms generated by the vector field $\sigma$.

The paper is a continuation and extension of \cite{IvanovKangVas} and is organized as follows.
The rest of Introduction provides some remarks about the relations between equations
\eqref{Formula: The main system}
and the BPZ equation considered first in 
\cite{Belavin1984}. 
In Section 
\ref{Section: Preliminaries},
we give all necessary  definitions  for ($\delta,\sigma$)-SLE as well as a very basic definitions and properties of 
GFF that we will need in what follows.
Further, in Section \ref{Section: Coupling between SLE and GFF}, we prove the coupling Theorem 
\ref{Theorem: The coupling theorem}.
It states that for any $(\delta,\sigma$)-SLE
a proper pushforward of $S_n$ denoted by
${G_t^{-1}}_* S_n$ 
(a generalisation of (\ref{Formula: M_n = S_n(G,G...)}) with an additional $\chi$-term similar to 
(\ref{Formula: Phi ->Phi - chi arg})) 
is a local martingale if and only if the 
system 
\eqref{Formula: The main system}
is satisfied for given $\delta$, $\sigma$, $\Gamma$, and $\eta$.
The same theorem also states that  both: the local martingale property and the system of equations are equivalent to a \emph{local coupling} which is a weaker version of the coupling from \cite{Sheffield2010} discussed above. We expect, but do not consider in this paper, that the local coupling leads to the same property of the flow lines of 
$e^{i\Phi(z)/\chi}$ to agree in law with the 
($\delta,\sigma$)-SLE curves.

The general solution to the system 
\eqref{Formula: The main system}
gives all possible ways to couple $(\delta,\sigma)$-SLEs with the GFF at least in the frameworks of our assumptions of the pre-pre-Schwarzian behaviour of $\eta$ and of the scalar behaviour of $\Gamma$. 

In Section 
\ref{Section: The coupling in case of Dirichlet boundary condition of the covariance function},
we assume the simplest choice of the  Dirichlet boundary conditions for $\Gamma$ and study all 
($\delta,\sigma$)-SLEs that can be coupled. It turns out that only the classical SLEs with drift are allowed plus some exceptionals cases, 
see Sections
\ref{Section: Chordal SLE with fixed time reparametrization}
and
\ref{Section: SLE with one fixed boundary point},
that define the same measure as for the chordal SLE up to  time reparametrization 
Observe however, that the coupling with the classical SLEs with drift has not been considered in the literature so far.

Further, in Sections 
\ref{Section: Coupling to GFF with Dirichlet-Neumann boundary condition}
and
\ref{Section: Coupling to twisted GFF},
we assume less trivial choices of $\Gamma$ and obtain that among all ($\delta,\sigma$)-SLEs only the dipolar and radial SLEs with $\kappa=4$ can be coupled. The first case was considered in \cite{Kanga}. The second one requires a construction of a specific ramified GFF 
$\Phi_{\text{twisted}}$,
that changes its sign to the opposite while being turned once around some interior point in $D$. 
This construction was considered before and it is called the `twisted CFT' as we were informed by Nam-Gyu Kang, see \cite{Kang_preparation}.

\subsection{Nature of  coupling}

One of the approaches to conformal field theory (CFT) boils down to consideration of a probability measure in a space of function in $D$. The simplest choice is GFF. The chordal SLE/CFT correspondence is revealed in, for example, 
\cite{Friedrich2003}
and
\cite{Kang2011}. Here, we extend this treatments to a $(\delta,\sigma)$-SLE/CFT correspondence. 
This section is less important for the general objective of the paper and is dedicated mostly to a reader with physical background. 

We consider a boundary CFT (BCFT) defined oin a domain $D\subset\C$ with one free scalar bosonic field 
$\Phi$. 
One of the standard approach to define a quantum field theory is the heuristic functional integral formulation with the classical action $I[\Phi]$, see for instance, \cite{Faddeev1991}. 
The following triple definition of the Schwinger functions 
$S(z_1,z_2,\dotso z_n)$
manifests the relation between the probabilistic notations, functional integral formulation, and the operator approach.
\begin{equation}\begin{split}
 S_n(z_1,z_2,\dotso z_n):=& 
 \Ev{\Phi(z_1)\Phi(z_2)\dotso \Phi(z_n)} 
 =\\=&
 c^{-1}\int
 \Phi(z_1)\Phi(z_2)\dotso \Phi(z_n) e^{-I[\Phi]} \mathfrak{D}\Phi
 =\\=&
 \langle| 
 \mathbb{T} 
 \left[ \hat \Psi(a) \hat \Phi(z_1) \hat \Phi(z_2) \dotso \hat \Phi(z_n) \right] 
 |\rangle,\\
 & z_i\in D, \quad i=1,2,\dotso n,\quad n=1,2,\dotso
\end{split}\label{heuristic}\end{equation}
Here in the second term,  `$|\rangle$' is the vacuum state, $\hat \Phi(z)$ the primary operator field, 
$\hat\Psi$ a certain operator field taken at a boundary point $a\in D$,  
and $\mathbb{T}[\dotso]$ is the time-ordering, which is often dropped in the physical literature, we refer to \cite{Schottenloher2008} for details.
The second string in \eqref{heuristic} contains a heuristic integral over some space of functions $\Phi(z)$ on $D$, which corresponds to the operator $\hat \Phi(z)$. 

The first string in \eqref{heuristic} is a mathematically precise formulation of the second one. The expectation $\Ev{\dotso}$ can be understood as the Lebesgue integral over the space $\mathcal{D}'(D)$ of linear functionals over smooth functions in $D$ with compact support.
The expression $e^{-I[\Phi]} \mathfrak{D}\Phi$ can  be in its turn understood as the 
differential w.r.t. the measure.

We emphasize here that the correlation functions are not completely defined by the action $I[\Phi]$ because one has to specify also the space of functions the integral is taken over and the measure. For instance, the Euclidean free field action 
\begin{equation*}
	I[\Phi]=\frac12 \int_{D} \de \Phi(z) \bar \de \Phi(z) d^2 z
\end{equation*}
defines only the singular part of the 2-point correlation function, which is $-(2\pi)^{-1}\log |z|$. 
To illustrate the statement above let us consider the following example. Let the expectation of $\Phi$ be identically zero,
\begin{equation*}
	S_1(z) = 
	\int\limits
	\Phi(z) e^{-I[\Phi]} \mathfrak{D}\Phi  
	= 0, \quad z\in D.
\end{equation*}
Depending on the choice of the space of functions $\Hc_s'\ni\Phi$ and of the measure on it, the 2-point correlation function 
\begin{equation*}
	\Gamma(z_1,z_2) =
	\int\limits
	\Phi(z_1)\Phi(z_2) e^{-I[\Phi]} \mathfrak{D} \Phi 
\end{equation*}
varies. For example in \cite{Kang2011,Kang2012a,Kang,Miller2012},
$\Gamma$ was assumed to possess the Dirichlet boundary conditions,
see (\ref{Formula: Gamma_D = Log...}). But in 
\cite{Kanga}, $\Gamma$ vanishes only on a part of the boundary and on the other part the boundary conditions are Neumann's. In \cite{Izyurov2010}, even more general boundary conditions were considered.

In fact, under CFT one can understand a probability measure on the space of functions 
$\Hc_s'$ in $D$, which is just a version of $\mathcal{D}'(D)$.
In the case of the free field, the Schwinger functions (equivalently, correlation functions or moments) $S_n$ are of the form 
\begin{equation*}\begin{split}
 S_1[z_1]=&\eta[z_1],\\
 S_2[z_1,f_2]=&\Gamma[f_1,f_2] + \eta[f_1]\eta[f_2],\\
 S_3[z_1,f_2,f_3]=&\Gamma[f_1,f_2]\eta[f_3] + 
  \Gamma[z_3,z_1] \eta[f_2] + 
  \Gamma[z_2,z_3] \eta[f_1]+
  \eta[f_1] \eta[z_2] \eta[f_3],\\
 S_4[f_1,f_2,f_3,f_4]=&
 \Gamma[z_1,z_2]\Gamma[z_3,z_4]+\Gamma[f_1,f_3]\Gamma[f_2,f_4]+\Gamma[f_1,f_4]\Gamma[z_2,z_3]
 +\\+&
 \Gamma[z_1,z_2]\eta(z_3)\eta[z_4]+\Gamma[z_1,z_3]\eta[z_2]\eta[z_4]+\Gamma[z_1,z_4]\eta[f_2]\eta[z_3]
 +\\+&
 \eta[z_1]\eta[z_2]\eta[z_3]\eta[z_4],\\
 \dotso&
\end{split}\end{equation*}
In this case, the measure 
is characterized by the space $\Hc_s'$ equipped with certain topology,
by  the field expectation
$\eta(z)=S_1(z)$, and
by Green's function (covariance) 
$\Gamma(z,w)=S_2(z,w)-S_1(z)S_1(w)$. 

Assume now that 
\begin{equation}
	\Lc S_n(z_1,z_2,\dotso,z_n) = 0,\quad n=1,2,\dotso
	\label{Formula: L S_n = 0}
\end{equation}
for some first order differential operator $\Lc$. Then, in the language of quantum field theory, the functions $S_n$ are called `correlation functions' and the relations 
\eqref{Formula: L S_n = 0}
can be called the Ward identities. This identities usually correspond to invariance of $S_n$ with respect to some Lie group.
For example, if 
\begin{equation}
	\Lc:= \sum\limits_{i=1}^{\infty}
	\sigma(z_i)\de_{z_i} + \overline{\sigma(z_i)} \de_{\bar z_i}
	\label{Formual: Lc := sigma de + ...}
\end{equation}
for a vector field $\sigma$ of the form
\eqref{Formula: allowed delta and sigma}, 
then \eqref{Formula: L S_n = 0} is equivalent to
\begin{equation*}
	S_n(z_1,z_2,\dotso,z_n) = S_n(H_s[\sigma](z_1),H_s[\sigma](z_2),\dotso,H_s[\sigma](z_n))
	,\quad s\in\mathbb{R},
\end{equation*}
where $\{H_t[\sigma]\}_{t\in\mathbb{R}}$ is a one parametric Lie group of M\"obious automorphisms 
$H_t[\sigma]:D\map D$
induced by $\sigma$:
\begin{equation}
	d H_t[\sigma](z) = \sigma(H_t[\sigma](z))dt,\quad z\in D,\quad t\in\mathbb{R},\quad 
	H_0[\sigma](z)=z,
	\label{Formula: dG = sigma(G) dt}
\end{equation}
because
\begin{equation}
	\left.
		d S_n(H_s[\sigma](z_1),H_s[\sigma](z_2),\dotso,H_s[\sigma](z_n)) 
	\right|_{s=0}	
	= 
	\Lc S_n(z_1,z_2,\dotso,z_n) ds.	
	\label{Formula: dS(H) = L S ds}
\end{equation}

The relations 
\eqref{Formula: L S_n = 0}
with
\eqref{Formual: Lc := sigma de + ...}
are satisfied if and only if
\begin{equation}\begin{split}
	&\left(
		\sigma(z)\de_{z} + \overline{\sigma(z)} \de_{\bar z}
	\right)
	\eta(z) = 0,\\
	&\left(
		\sigma(z)\de_{z} + \overline{\sigma(z)} \de_{\bar z} 
		+ \sigma(w)\de_{w} + \overline{\sigma(w)} \de_{\bar w} 
	\right) 
	\Gamma(z,w)=0.
	\label{Formula: formula 1_4}
\end{split}\end{equation}

Consider now the vector field $\delta$ in  place of $\sigma$. It
is still holomorphic but generates a semigroup 
$\{H_t[\sigma]\}_{t\in(-\infty,0]}$ 
of endomorphisms
$G_t:D\map D\setminus \gamma_t$ ($\gamma_t$ is some growing curve in $D$)
instead of the group of automorphisms of $D$. 
Then the second identity in
\eqref{Formula: formula 1_4}
does not hold because
\begin{equation*}\begin{split}
	&\left( 
		\delta(z) \de_z + \overline{\delta(z)} \de_{\bar z} 
		+ \delta(w) \de_w + \overline{\delta(w)} \de_{\bar w} 
	\right) \Gamma(z,w) \neq 0.
\end{split}\end{equation*}
Geometrically this can be explained by the fact that $H_t[\delta]$ is not just a change of coordinates but also it necessarily shrinks the domain $D$. 

Let now $G_t$ satisfy 
\eqref{Formula: SLE in S form},
which is just a stochastic version of
\eqref{Formula: dG = sigma(G) dt},
where $v(z)dt$ is replaced by $\delta(z)dt+\sigma(z) \dS B_t$.
The vector field $\delta$ induces endomorphisms, $\sigma$ induces automorphisms, and the multiple of 
$\dS B_t$ can be understood as the white noise. 
Such a stochastic change of coordinates $G_t$ in the infinitesimal form, according to the It\^{o} formula, leads to the second order differential operator 
$\delta(z)\de_z + \frac12 (\sigma(z)\de_z)^2$ 
instead of the first order Lie derivative $v(z)\de_z$.

The first two Ward identities become
\begin{equation}\begin{split}
	&\left( 
		\delta(z)\de_z + \frac12 (\sigma(z)\de_z)^2 
		+ \text{complex conjugate}
	\right) 
		\eta (z) = 0,\\
	&\left(
		\delta(z)\de_z + \delta(w)\de_w +
		\frac12 \left( \sigma(z)\de_z + \sigma(w)\de_w \right)^2
		+ \text{complex conjugate}
	\right)
	\times\\ \times&
	\left(\Gamma(z,w) + \eta(z)\eta(w)\right) = 0,
	\label{Formula: delta + 1/2 sigma^2 eta = 0, (delta + 1/2 sigma^2 eta)(Gamma + eta eta) = 0}
\end{split}\end{equation}
which is equivalent to 
\eqref{Formula: The main system}
if $\chi=0$.
Due to a version of the It\^o formula  we have
\begin{equation*}\begin{split}
	&\left.
		\dI S_n(G_s(z_1),G_s(z_2),\dotso,G_s(z_n)) 
	\right|_{s=0}	
	=\\=& 
	\left( \Lc_{\sigma} + \frac12 \Lc_{\sigma}^2 \right) S_n(z_1,z_2,\dotso,z_n) dt +
	\Lc_{\sigma} S_n(z_1,z_2,\dotso,z_n) \dI B_t,	
\end{split}\end{equation*}
which is an analog to the relation 
\eqref{Formula: dS(H) = L S ds}.
In other words, the system
\eqref{Formula: The main system} represents
the local martingale conditions.

For the case 
\begin{equation}
	\delta(z)=\frac{2}{z}, \quad 
	\sigma(z)=-\sqrt{\kappa},\quad \kappa>0,
	\quad z\in D=\HH,
	\label{Formula: delta=2/z, sigma=-kappa^1/2}
\end{equation}
the identities 
\eqref{Formula: delta + 1/2 sigma^2 eta = 0, (delta + 1/2 sigma^2 eta)(Gamma + eta eta) = 0}
is an analog to the BPZ equation (5.17) in \cite{Belavin1984}. 
The choice of
\eqref{Formula: delta=2/z, sigma=-kappa^1/2}
corresponds to the chordal SLE
(see Example \ref{Example: Chordal Loewner equation})
and it was considered first in
\cite{Friedrich2003},
and later in \cite{Kang2011}.
In this paper, we assume that the field $\delta$
is holomorphic, has a simple pole of positive residue at a boundary point $a\in\de D$, and tangent at the rest of the boundary.

\section{Preliminaries}
\label{Section: Preliminaries}

Each version of the L\"{o}wner equations and holomorphic flows are usually associated with a certain canonical domain 
$D\subset\mathbb{C}$
in the complex plane specifying fixed interior or boundary points, for example, in the case of the upper half-plane, the unit disk, etc. It is always possible to map these domains one to another if necessary.
In this paper, we focus on conformally invariant properties, i.e., those which are not related to a specific choice of the canonical domain.
This can be achieved by considering a generic hyperbolic simply connected domain and conformally invariant structures from the very beginning. 
It could be also natural to go further and work with a simply connected hyperbolic Riemann surface $\Dc$ (with the boundary $\de\Dc$). In what follows, $\Dc$ is thought of as a generic domain with a well-defined boundary or a Riemann surface.

We  use mostly global chart maps 
$\psi\colon\Dc\map D^{\psi}\subset \mathbb{C}$ from 
\index{chart map $\psi$}
$\Dc$ to a domain of the complex plane, 
writing $\psi$ for a chart 
$(D,\psi)$ for simplicity.
The charts $\psi_{\HH}:\Dc\map\HH$ corresponding to the upper half-plane or 
$\psi_{\D}:\Dc\map\D$ to the unit disk are related by
\eqref{Formula: tau_H D}. 
Another example is a multivalued map 
$\psi_{\mathbb{L}}:\Dc\map\HH$
onto the upper half-plane related to $\psi_{\D}$ by
\eqref{Formula: tau_D LL = ...}.
A single-valued branch of $\psi_{\mathbb{L}}:\Dc\map\HH$ is not a global chart map.

\subsection{Vector fields and coordinate transform}
\label{Section: Vector fields and coordinate transform}

Consider a holomorphic vector field $v$ on $\Dc$, which can be defined as a holomorphic section of the tangent bundle. 
We also can define it
as a map 
$\psi\mapsto v^{\psi}$ 
from the set of all possible global chart maps 
$\psi:\Dc\map D^{\psi}$ 
to the set of holomorphic functions 
$v^{\psi}:D^{\psi}\map\C$
defined in the image of these maps $D^{\psi}:=\psi(\Dc)$.
For  the vector fields,  the following coordinate change holds. For any 
$\tilde \psi:\Dc\map D^{\tilde \psi}$,
\begin{equation}
 v^{\tilde \psi}(\tilde z) = 
 \frac{1}  { \tau'(\tilde z)} 
 v^{\psi}(\tau(\tilde z)),\quad
 \tau :=\psi\circ\tilde\psi^{-1},\quad
 \tilde z \in \tilde D.
 \label{Formula: tilde v = 1/dtau v(tau)}
\end{equation}

We consider  a conformal homeomorphism 
$G:\Dc\setminus\K \map \Dc$ (inverse endomorphism) that
in a given chart $\psi$  is given as 
\begin{equation*}
	G^{\psi}:= \psi \circ G \circ \psi^{-1}:
	D^{\psi}\setminus K^{\psi} \map D^{\psi}. 
\end{equation*}
A vector field in other words is a $(-1,0)$-differential and the \emph{pushforward of a vector field} 
$G_*^{-1}:v^{\psi} \mapsto \tilde v^{\tilde \psi}$ 
is defined
by the rule
\begin{equation}\begin{split}
 & G_*^{-1} v^{\psi}(z) := 
 v^{\psi \circ G}(z) = 
 v^{\psi \circ G \circ \psi^{-1} \circ \psi}(z) =
 v^{G^{\psi} \circ \psi}(z) 
 =\\=&
 \frac{1}{\left(G^{\psi}\right)'(z)} 
 v^{\psi}\left(G^{\psi}(z)\right),\quad 
 z\in D^{\psi},
 \label{Formula: G v = 1/G' v(G)}
\end{split}\end{equation} 
thereby $G^{\psi}$ plays the role of $\tau$.

It is easy to see that for any two given maps $G$ and $\tilde G$ as above
\begin{equation*}
 \tilde G_*^{-1} G_*^{-1} = \left( {\tilde G}^{-1} \circ G^{-1} \right)_*, 
\end{equation*}
which motivates the upper index $-1$ in the definition of $G_*$, because in this case we are working with a left module.

The pushforward map $G_*^{-1}$ also can  be obtained as a map from the tangent space to $\Dc$ induced by $G$. 
We follow the way above because it can be generalized then to sections of tangent and cotangent spaces and their tensor products, see Section
\ref{Section: Pre-pre-Schwarzian}.

Let $v_t$ and $\tilde v_t$ be two holomorphic vector fields depending on time
continuously such as the following differential equations has continuously
differentiable solutions $F_t$ and $\tilde F_t$ in some time interval
\begin{equation*}\begin{split}
	&\dot F_t = v_t \circ F_t,\\
	&\dot {\tilde F}_t = \tilde v_t \circ \tilde F_t.
\end{split}\end{equation*}
Then, we can conclude that
\begin{equation*}
	\frac{\de}{\de t}(F_t \circ {{\tilde F}_t}) = 
	\left(
		v_t + {F_t}_* \tilde F_t{}_* \tilde v_t
	\right) \circ F_t \circ \tilde F_t
\end{equation*}
\begin{equation*}
	\dot F_t^{-1}  = 
	- \left( {F_t^{-1}}_* v_t \right) \circ F_t^{-1}. 
\end{equation*}
in the same $t$-interval and in the region of $\Dc$ where $F_t$ and $\tilde F_t$ are
defined.
The latter relation can be reformulated in a fixed chart $\psi$ as
\begin{equation*}
	\dot {F_t^{-1}}^{\psi}(z)  = 
	- \left(\left( {F_t^{-1}}\right)^{\psi}\right)'(z)~  v^{\psi}_t(z). 
\end{equation*}

\subsection{($\delta,\sigma$)-SLE basics.}
Here we  repeat briefly some necessary material about the slit holomorphic stochastic flows
SLE($\delta,\sigma$) considered first in the paper \cite{Ivanov2014} that we advice to follow for more details.

A holomorphic  vector field $\sigma$ on $\Dc$ is called \emph{complete} if the solution 
$H_s[\sigma]^{\psi}(z)$ 
of the initial valued problem 
\begin{equation}
 \dot H_s[\sigma]^{\psi}(z) = \sigma^{\psi}(H_s[\sigma]^{\psi}(z)),\quad 
 H_0[\sigma]^{\psi}(z) = z,\quad 
 z\in D^{\psi}
 \label{Formula: d G = sigma G ds in a chart}
\end{equation}
is defined for
$s\in(-\infty,\infty)$. The solution $H_s[\sigma]^{\psi}:D^{\psi}\map D^{\psi}$ 
 is 
 a conformal automorphism of 
$D^{\psi}=\psi(\Dc)$. 
Here and further on, we denote the partial derivative with respect to $s$ as
$\dot H_s:=\frac{\de}{\de s} H_s$. 
It is straightforward to see that the differential equation is of the same form in any chart $\psi$. So it is  reasonable to drop the index $\psi$ and variable $z$ as follows
\begin{equation}
 \dot H_s[\sigma] = \sigma \circ H_s[\sigma] ,\quad 
 H_0[\sigma] = \id.
 \label{Formula: d H = sigma H ds}
\end{equation}
Let $\psi_{\HH}$ be a chart map onto the upper half-plane
$D^{\psi}=\HH$. 
We denote $X^{\HH}:=X^{\psi_{\HH}}$ if $X$ is a vector field, a conformal map, or a pre-pre-Schwarzian form (defined below) on $\Dc$. 

A complete vector field $\sigma$ in the half-plane chart admits the form
\begin{equation}
 \sigma^{\HH}(z) = \sigma_{-1} + \sigma_0 z + \sigma_1 z^2,\quad z\in \HH,\quad
 \sigma_{-1}, \sigma_0, \sigma_1 \in \mathbb{R}.
 \label{Formula: sigma with 1 slit}
\end{equation}

A vector field $\delta$ is called \emph{antisemicomplicate } if the initial valued problem
\begin{equation*}
 \dot H_s[\delta] = \delta \circ H_s[\delta] ,\quad 
 H_0[\delta] = \id.
\end{equation*}
has a solution $H_t[\sigma]$, which is a conformal map 
$H_t[\sigma]:\Dc\setminus \mathcal{K}_t\map\Dc$ for all 
$t\in[0,+\infty)$ for some family of subsets $\mathcal{K}_t\subset \Dc$.

The linear space of all antisemicomplete fields is essentially bigger and infinite-dimensional, but we restrict ourselves to the from
\begin{equation}
 \delta^{\HH}(z) = \frac{\delta_{-2}}{z} + \delta_{-1} + \delta_0 z + \delta_1 z^2,\quad z\in \HH,\quad
 \delta_{-1}, \delta_0, \delta_1 \in \mathbb{R},\quad \delta_{-2}>0,
 \label{Formula: delta with 1 slit}
\end{equation}
which guarantees that the set
$\mathcal{K}_t$
is curve-generated, see \cite{Ivanov2014}.
The first term gives just a simple pole in the boundary at the origin with a positive residue. The sum of the last three terms is just a complete field.  

Let us consider first a continuously differentiable \emph{driving function} function
$u_t:t\mapsto \mathbb{R}$ for  motivation. 
Then the solution $G_t$ of the initial value problem 
\begin{equation}
 \dot G_t = \delta \circ G_t + \dot u_t\, \sigma \circ G_t ,\quad 
 G_0 = \id,\quad t\geq 0,
 \label{Formula: d G = delta G dt + sigma G du}
\end{equation}
is a family of conformal maps $G_t:\Dc\setminus \mathcal{K}_t\map\Dc$, 
where the family of subsets $\{\mathcal{K}_t\}_{t\geq 0}$ depends on the driving function $u$. To avoid the requirement of continuous differentiability we use the following method.

Define  a conformal map 
\begin{equation*}
 g_t := H_{u_t}[\sigma]^{-1} \circ G_t,\quad t\geq 0.
\end{equation*}
It satisfies the equation 
\begin{equation}
 \dot g_t = (H_{u_t}[\sigma]^{-1}_* \delta) \circ g_t,\quad g_0=\id,
 \label{Formula: d g = h delta g dt}
\end{equation}
where $H_{u_t}[\sigma]^{-1}_* \delta$ is defined in 
\eqref{Formula: G v = 1/G' v(G)}
for $G=H_{u_t}[\sigma]$.
Reciprocally,  (\ref{Formula: d G = delta G dt + sigma G du}) can be obtained from 
(\ref{Formula: d g = h delta g dt}), although (\ref{Formula: d g = h delta g dt}) is defined for a continuous function $u_t$, not necessary continuously differentiable. This motivates the following definition.

\begin{definition}
 Let  $\sigma$ and $\delta$ be a complete and a semicomplete vector fields as  in 
 (\ref{Formula: sigma with 1 slit})
and 
 (\ref{Formula: delta with 1 slit}), 
 and let $u_t$ be a continuous function $u:[0,\infty)\map \mathbb{R}$. 
 Then the solution $g_t$ to the initial value problem (\ref{Formula: d g = h delta g dt})
 is called the \emph{(forward) general slit L\"owner chain}. Respectively, the map $G_t$ is defined by
 \begin{equation*}
 	G_t:=H_{u_t}[\sigma]\circ g_t,\quad t\geq 0.
 \end{equation*}
\end{definition}

The stochastic version of equation \eqref{Formula: d g = h delta g dt} can be set up by introducing the Brownian measure on the set of driving functions, or equivalently, as follows.
\begin{definition}
 Let $\sigma$ and $\delta$ be as  in 
 (\ref{Formula: sigma with 1 slit})
and 
 (\ref{Formula: delta with 1 slit})), and let $B_t$ be the standard Brownian motion (the Wiener process).
 Then the solution to the stochastic differential equation 
 \begin{equation}
  \dS G_t = 
  \delta \circ G_t dt + \sigma \circ G_t \dS B_t,\quad G_0=\id,\quad t\geq 0
  \label{Formula: Slit hol stoch flow Strat}
 \end{equation}
 (where $d^{\rm{S}}$ is the Stratanowich differential)
 is called a \emph{slit holomorphic stochastic flow} or ($\delta,\sigma$)-SLE.
\end{definition}

In order to formulate 
(\ref{Formula: Slit hol stoch flow Strat})
in the It\^{o} form we have to chose some chart $\psi$: 
\begin{equation}
 \dI G_t^{\psi}(z) =
 \left(\delta^{\psi} + \frac12\sigma^{\psi} \left(\sigma^{\psi}\right)' \right) 
  \circ G_t^{\psi}(z) dt 
 + \sigma^{\psi} \circ G_t^{\psi}(z) \dI B_t,\quad G^{\psi}_0(z)=z,
 \label{Formula: Slit hol stoch flow It\^{o}}
\end{equation}
see, for example, \cite[Section 4.3]{Gardiner1982}  for the definition of the Stratanovich and It\^{o}'s differentials and for the relations between them. A disadvantage of  It\^{o}'s form is that the coefficient at $dt$ transforms from  chart to  chart in a complicated way.

\begin{example}
\label{Example: Chordal Loewner equation} 
\textbf{Chordal L\"owner equation.}

Let us show how the construction 
(\ref{Formula: d G = delta G dt + sigma G du}--\ref{Formula: Slit hol stoch flow It\^{o}})
works in the case of the chordal L\"{o}wner equation.
Define 
\begin{equation*}\begin{split}
 \delta^{\HH}(z) = \frac{2}{z},\quad
 \sigma^{\HH}(z) = -1,
\end{split}\end{equation*}
in the half-plane chart.
Then $H[\sigma]_s$ can be found from  equation (\ref{Formula: d H = sigma H ds}) as
\begin{equation*}\begin{split}
 \dot H_s[\sigma]^{\HH}(z) = -1 ~,\quad
 H_0[\sigma]^{\HH} (z) = z,\quad z\in \HH,
\end{split}\end{equation*}
\begin{equation*}\begin{split}
 H_s[\sigma]^{\HH}(z) = z - s\quad z\in \HH.
\end{split}\end{equation*}
The equation for $g_t$ becomes
\begin{equation*}\begin{split}
 \dot g_t^{\HH} = \frac{2}{g_t^{\HH}-u_t},
\end{split}\end{equation*}
and it is known as the chordal L\"owner equation.

For a differentiable $u_t$ we can write
\begin{equation*}\begin{split}
 \dot G_t^{\HH}(z) = \frac{2}{G_t^{\HH}(z)} - \dot u_t~
\end{split}\end{equation*}
The stochastic version in the Stratonovich form can be obtained by  substituting 
$u_t=\sqrt{\kappa}B_t$:
\begin{equation}\begin{split}
 d^{\rm{S}} G_t^{\HH}(z) = \frac{2}{G_t^{\HH}(z)}dt - \sqrt{\kappa} d^{\rm{S}} B_t,
 \label{Formula: chordal SLE in H in Strat}
\end{split}\end{equation}
and it is of the same form in the It\^{o} case in the half-plane chart because ${\sigma^{\HH}}'(z)\equiv 0$,
\begin{equation*}\begin{split}
 d^{\rm{It\^{o}}} G_t^{\HH}(z) = \frac{2}{G_t^{\HH}(z)}dt - \sqrt{\kappa} d^{\rm{It\^{o}}} B_t.
\end{split}\end{equation*}
In other charts 
$\left(\sigma^{\chi}\right)'(z)\not \equiv 0$, and the Stratonovich and It\^{o} forms differ.
The space 
$\mathcal{G}[\delta,\sigma]$ 
consists of endomorphisms that in the half-plane chart are normalized as
\begin{equation*}
	G_t^{\HH}(z) = z + O\left(\frac{1}{z}\right).
\end{equation*}
\end{example}

\begin{example} 
\label{Example: Radial Leowner equation} 
\textbf{Radial Leowner equation.}

Let $\psi_{\D}:\Dc\map\D$ be the chart map defined by
(\ref{Formula: tau_H D}),
and let
$\tau_{\HH,\D}=\psi_{\HH}\circ\psi_{\D}^{-1}$.
The transition function $\tau_{\HH,\D}$ maps the point $1$ in the unit disk chart to the point $0$ in the half-plane chart, the point $-1$ to a point at infinity, and the point $0$ to $+i$. Similarly to what we have done in the half-plane case, we define 
$X^{\D}:=X^{\psi_{\D}}$ and call it \emph{the unit disk chart}.

Let
\begin{equation}\begin{split}
 \delta^{\D}(z) = -z\frac{z+1}{z-1},\quad
 \sigma^{\D}(z) = - i z,
 \label{Formula: delta and sigma for radial SLE in D}
\end{split}\end{equation}
that corresponds to 
\begin{equation}\begin{split}
 \delta^{\HH}(z) = \frac12 \left( \frac{1}{z} + z \right),\quad
 \sigma^{\HH}(z) = - \frac12 (1+z^2)
 \label{Formula: delta and sigma for radial SLE in H}
\end{split}\end{equation}
in the half-plane chart.

The equation for $H_s[\sigma]$ in the unit disk chart becomes
\begin{equation*}\begin{split}
 &\dot H_s[\sigma]^{\D}(z) = -i H_s[\sigma]^{\D}(z),\quad
  H_0[\sigma]^{\D}(z) = z,\quad
 z\in \D.
\end{split}\end{equation*}
The solution takes the form
\begin{equation*}\begin{split}
 H_s[\sigma]^{\D}(z) = e^{-i s}z,
\end{split}\end{equation*}
and
\begin{equation*}
 \dot G_t^{\D}(z) = 
 -G_t^{\D}(z) \frac{ G_t^{\D}(z)+1}{G_t^{\D}(z)-1}   
 -i G_t^{\D}(z) \dot u_t.
\end{equation*}
Thus, the equation for $g_t$ becomes
\begin{equation*}\begin{split}
 \dot g_t^{\D}(z) =& 
 \left( 
  \frac{1}{{H_{u_t}[\sigma]^{\D}}'} 
   \left( - H_{u_t}[\sigma]^{\D} 
   \frac{H_{u_t}[\sigma]^{\D}+1}{H_{u_t}[\sigma]^{\D}-1} \right) 
 \right)   
 \circ g_t^{\D}(z) 
 =\\=&
 \frac{1}{ e^{-i u_t} } 
 \left( - e^{-i u_t} g_t^{\D}(z) 
  \frac{ e^{-i u_t} g_t^{\D}(z) + 1 }{ e^{-i u_t} g_t^{\D}(z) - 1 } 
 \right)  
\end{split}\end{equation*}
or
\begin{equation*}\begin{split}
 \dot g_t^{\D}(z) = 
 g_t^{\D}(z) \frac{e^{i u_t} +g_t^{\D}(z)}{ e^{iu_t} - g_t^{\D}(z)} .
\end{split}\end{equation*}

Its stochastic version in the Stratanovich form is
\begin{equation}
 d^{\rm{S}} G_t^{\D}(z) = 
 G_t^{\D}(z) \frac{ G_t^{\D}(z)+1}{G_t^{\D}(z)-1} dt   
 -i \sqrt{\kappa} G_t^{\D}(z) d^{\rm{S}} B_t.
 \label{Formula: G radial SLE in D}
\end{equation}
or in the half-plane chart
\begin{equation*}
 d^{\rm{S}} G_t^{\HH}(z) = \frac12 \left(
  \frac{1}{G_t^{\HH}(z)} + G_t^{\HH}(z)
 \right) dt -
 \frac{\sqrt{\kappa}}{2} \left( 1 + \left(G_t^{\HH}(z)\right)^2 \right) d^{\rm{S}} B_t.
\end{equation*}
The collection 
$\mathcal{G}[\delta,\sigma]$
consists of maps that in the unit disk chart are normalized by
\begin{equation*}
	G^{\D}(0) = 0.
\end{equation*}
\end{example}

\medskip
\noindent
It turns out that some different combinations of $\delta$ and $\sigma$ induce measures that can be transformed one to another in a simple way. For example, let $m:\Dc\map\Dc$ be a M\"obius automorphism fixing a point $a\in\de \Dc$ where $\delta$ has a pole. 
Then 
\begin{equation}
 \delta\map m_* \delta,\quad \sigma\map m_*\sigma
 \label{Formula: M - transfrom}
\end{equation}
are also vector fields of the form  
(\ref{Formula: sigma with 1 slit})
and 
(\ref{Formula: delta with 1 slit}).
For instance, let
$\delta$ and $\sigma$ be as  in Example 
\ref{Example: Radial Leowner equation}, and let
$m^{\D}:\D\map \D$ 
map the center point $0$ of the unit disk to another point inside it. Then 
we come to an equation defined by $m_*\delta$ and  $m_*\sigma$, which is still in fact,
the radial equation  written in different coordinates. 

Another example of such a transformation preserving the form 
(\ref{Formula: sigma with 1 slit})
and 
(\ref{Formula: delta with 1 slit})
can be constructed as follows.
\begin{equation}
 \delta^{\HH}(z) \map c^2 \delta^{\HH}(z),\quad  
 \sigma^{\HH}(z) \map c \sigma^{\HH}(z),\quad 
 t \map c^{-2} t,\quad 
 c>0.
 \label{Formula: S - transform}
\end{equation}
The solution is not changed as a random law, because
$c B_{t/c^2}$ and $B_t$ agree in law.

It is important to know which of the equations defined by the parameters $\delta_{-2}$, $\delta_{-1}$, $\delta_{0}$, $\delta_{1}$, $\sigma_{-1}$, $\sigma_{0}$, and $\sigma_{0}$ are `essentially different'. A systematic analysis of this question 
was presented in \cite{Ivanov2014}.  Without lost of generality we can restrict ourselves to the from
\begin{equation}\begin{split}
 \delta^{\HH}(z) = \frac{2}{z} + \delta_{-1} + \delta_{0} z + \delta_{1} z^2,\quad
 \sigma^{\HH}(z) = - \sqrt{\kappa}(1 + \sigma_{0} z + \sigma_{1} z^2),\quad \kappa>0.
 \label{Formula: delta and sigma normalized form}
\end{split}\end{equation}
Transformations 
(\ref{Formula: S - transform})
and
\begin{equation}
 \delta\map {r_c}_* \delta,\quad \sigma\map {r_c}_*\sigma,\quad 
 r^{\HH}_c(z):=\frac{z}{1-cz},~c\in\mathbb{R}
 \label{Formula: R - transform}
\end{equation}
preserve 
(\ref{Formula: delta and sigma normalized form}) and keep $\kappa$ unchanged. 
Thus, we can fix some 2 of  6 parameters in
(\ref{Formula: delta and sigma normalized form}). 
Besides, the transform
\begin{equation*}
 \delta\map \delta + \frac{\nu}{\sqrt{\kappa}} \sigma,\quad \nu\in\mathbb{R},
\end{equation*}
can be interpreted as an insertion of a drift to the Brownian measure.
Thus, all 6 parameters 
can be fixed  by $\kappa$ (responsible for the fractal dimension of the slit), 
$\nu$ 
(the drift), and some 2 
parameters that set the equation type (for example, chordal, radial, and dipolar).

Due to the autonomous form of the equation 
(\ref{Formula: d G = delta G dt + sigma G du})
the solution $G_t$ possesses the following property.
\begin{proposition}[\cite{Ivanov2014}]
	Let $\tilde u_{t-s}:=u_t - u_s$ for some fixed 
	$s:t\geq s \geq 0$, and let 
	  $\tilde G_{\tilde t}$ be defined by 
	\eqref{Formula: d G = delta G dt + sigma G du} 
	with the driving function $\tilde u_{\tilde t}$. Then
	\begin{equation*}
		G_t = \tilde G_{t-s} \circ G_s.
	\end{equation*}
\end{proposition}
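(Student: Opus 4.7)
The plan is to exploit the autonomous form of equation \eqref{Formula: d G = delta G dt + sigma G du}: its right-hand side depends on time only through the current values of $G_t$ and the increments of the driving function $u$, never through $t$ directly. This is exactly the ingredient that produces a Markov-type composition rule. I would first treat a continuously differentiable driving function, where the classical ODE interpretation applies, and then extend to a general continuous $u$ through the auxiliary chain $g_t$.

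For a $C^1$ driving function, $G_t$ satisfies $\dot G_t = \delta\circ G_t + \dot u_t\,\sigma\circ G_t$ with $G_0=\id$ in the classical sense. Define the candidate $\phi_\tau := \tilde G_\tau \circ G_s$ for $\tau\geq 0$. Since $G_s$ is $\tau$-independent and $\dot{\tilde u}_\tau = \dot u_{s+\tau}$, composing the defining equations gives
\begin{equation*}
 \dot \phi_\tau = \dot{\tilde G}_\tau \circ G_s
  = \left(\delta + \dot u_{s+\tau}\,\sigma\right) \circ \tilde G_\tau \circ G_s
  = \left(\delta + \dot u_{s+\tau}\,\sigma\right) \circ \phi_\tau,
\end{equation*}
with initial value $\phi_0 = \id\circ G_s = G_s$. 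The process $\tau\mapsto G_{s+\tau}$ solves the same Cauchy problem, so uniqueness of ODE solutions (applicable on the domain of holomorphy of $\delta$ and $\sigma$, where the vector fields are locally Lipschitz) forces $\phi_\tau = G_{s+\tau}$, which is exactly the claimed identity with $t=s+\tau$.

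For a merely continuous $u$, I would transfer the computation to the chain $g_t := H_{u_t}[\sigma]^{-1}\circ G_t$ defined by \eqref{Formula: d g = h delta g dt}, whose right-hand side is continuous in $t$ and holomorphic in the spatial variable, so classical ODE theory still furnishes existence and uniqueness. Concretely, set $\phi_\tau := H_{u_s}[\sigma]^{-1}\circ \tilde g_\tau\circ H_{u_s}[\sigma]\circ g_s$. Using the one-parameter group law $H_a[\sigma]\circ H_b[\sigma] = H_{a+b}[\sigma]$ together with $u_s+\tilde u_\tau = u_{s+\tau}$, and the pushforward composition rule from Section \ref{Section: Vector fields and coordinate transform}, a short direct computation shows $\dot\phi_\tau = \bigl(H_{u_{s+\tau}}[\sigma]^{-1}_*\,\delta\bigr)\circ\phi_\tau$ with $\phi_0 = g_s$. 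Uniqueness of the solution to \eqref{Formula: d g = h delta g dt} then yields $\phi_\tau = g_{s+\tau}$, and composing with $H_{u_{s+\tau}}[\sigma]$ on the left recovers $G_{s+\tau}=\tilde G_\tau\circ G_s$.

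The main obstacle is essentially notational bookkeeping: one must verify that the $H_{u_s}$-conjugation in the definition of $\phi_\tau$ is precisely what is needed to convert the ODE for $\tilde g_\tau$ (driven by $\tilde u_\tau$) into the ODE for $g_{s+\tau}$ (driven by $u_{s+\tau}$), and that the pushforward convention $(F\circ G)_*^{-1} = G_*^{-1}F_*^{-1}$ stated earlier makes these conjugations collapse correctly. No deeper idea beyond the autonomous structure of \eqref{Formula: d g = h delta g dt} is needed.
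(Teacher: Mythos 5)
Your proof is correct. The paper itself supplies no proof of this proposition — it cites \cite{Ivanov2014} and attributes the property to the autonomous form of equation \eqref{Formula: d G = delta G dt + sigma G du} — and your argument (uniqueness of solutions for the pointwise flow equation in the $C^1$ case, then reduction to the chain $g_t$ via the group law $H_a[\sigma]\circ H_b[\sigma]=H_{a+b}[\sigma]$ and the pushforward composition rule for merely continuous drivers) is precisely the standard verification that remark points to.
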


In the the stochastic case 
the process $\{G_t\}_{t\geq 0}$,
taking values in the space of inverse endomorphisms of $\Dc$,
is a continuous homogeneous Markov process.
In particular,
\begin{proposition}[\cite{Ivanov2014}]
If $G_t$ and $\tilde G_s$ are two independently sampled 
($\delta,\sigma$)-SLE maps, then $G_t \circ \tilde G_s$ has the same law as 
$G_{t+s}$. 
\end{proposition}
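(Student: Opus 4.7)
My plan is to deduce the stochastic statement from the deterministic composition proposition stated just above it, combined with the stationarity and independence of increments of Brownian motion.

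The reduction goes as follows. In the Stratonovich formulation \eqref{Formula: Slit hol stoch flow Strat} the map $G_t$ is obtained pathwise from the continuous driving function $u_t := \sqrt{\kappa}\,B_t$ via the decomposition $G_t = H_{u_t}[\sigma]\circ g_t$, where $g_t$ solves the non-autonomous ODE \eqref{Formula: d g = h delta g dt}. This construction is purely deterministic: to each continuous path $u\colon [0,\infty)\to\mathbb{R}$ it associates a family $G_t[u]$ of inverse endomorphisms, and the previous proposition tells us that for every $s\geq 0$ and every continuous $u$,
\begin{equation*}
 G_{s+\tilde t}[u] \;=\; G_{\tilde t}[\tilde u]\,\circ\, G_s[u],\qquad \tilde u_{\tilde t}:=u_{s+\tilde t}-u_s,\quad \tilde t\geq 0.
\end{equation*}

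Now fix $s\geq 0$ and define $\tilde B_{\tilde t}:=B_{s+\tilde t}-B_s$. By the Markov/stationarity property of Brownian motion, $\tilde B$ is a standard Brownian motion independent of $\mathcal{F}_s=\sigma(B_r:0\leq r\leq s)$. Setting $\tilde u_{\tilde t}:=\sqrt{\kappa}\tilde B_{\tilde t}$ and $\tilde G_{\tilde t}:=G_{\tilde t}[\tilde u]$, the pathwise map $u\mapsto G[u]$ shows that $\tilde G$ is a $(\delta,\sigma)$-SLE in its own right, independent of $G_s$ (which is $\mathcal{F}_s$-measurable). Applying the deterministic identity above path by path yields
\begin{equation*}
 G_{s+t} \;=\; \tilde G_{t}\,\circ\, G_s\qquad \text{almost surely}.
\end{equation*}
Since the pair $(\tilde G_t, G_s)$ has the same joint law as a pair $(G_t,\tilde G_s)$ of independently sampled $(\delta,\sigma)$-SLE maps, we conclude that $G_t\circ\tilde G_s$ has the same law as $G_{s+t}$.

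The only subtle point is justifying the pathwise interpretation of the Stratonovich SDE, so that the deterministic proposition can be invoked on each sample path. This is precisely the purpose of writing the flow in the form $G_t=H_{u_t}[\sigma]\circ g_t$: the factor $H_{u_t}[\sigma]$ depends only on $u_t$ and $\sigma$, while $g_t$ is obtained from an ordinary (non-autonomous) ODE whose coefficient depends continuously on $u_{\cdot}$ through $H_{u_t}[\sigma]^{-1}_*\delta$. Hence both the construction and the composition identity transfer from smooth $u_t$ to arbitrary continuous drivers, and in particular to Brownian paths, which is what the argument requires.
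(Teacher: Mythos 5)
Your argument is correct and is exactly the intended one: the paper itself gives no proof (both this proposition and the deterministic composition identity preceding it are quoted from \cite{Ivanov2014}, with only the remark that the autonomous form of the equation makes $\{G_t\}$ a homogeneous Markov process), and your reconstruction — pathwise composition identity for the driver $u=\sqrt{\kappa}B$, plus stationarity and independence of Brownian increments — is precisely how that reference establishes it. The one point you rightly flag, that the Stratonovich solution agrees almost surely with the pathwise construction $G_t=H_{u_t}[\sigma]\circ g_t$, is asserted in the paper when it introduces the SDE as ``equivalent'' to putting Brownian measure on the driving functions, so nothing further is needed.
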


\subsection{Pre-pre-Schwarzian}
\label{Section: Pre-pre-Schwarzian}

A collection of maps $\eta^{\psi}\colon \psi(\Dc)\map \mathbb{C}$, each of which is given in a global chart map $\psi:\Dc\map \psi(\Dc)\subset \mathbb{C}$, is called 
a \emph{pre-pre-Schwarzian} form of order $\mu,\mu^* \in\mathbb{C}$ if for any chart map $\tilde \psi$
\begin{equation}
 \eta^{\tilde \psi}(\tilde z) = 
 \eta^{\psi}( \tau (\tilde z) ) 
 + \mu \log \tau'(\tilde z)
 + \mu^* \log \bar \tau'(\tilde z) ,\quad
 \tau = \psi \circ \tilde\psi^{-1},\quad
 \tilde z\in\tilde D,\quad
 \forall \psi,\tilde\psi,
 \label{Formula: tilde phi = phi - chi arg}
\end{equation}
for any chart map $\tilde \psi$.
If $\eta$ is defined for one chart map, then it is automatically defined for all  chart maps. We borrowed the term `pre-pre-Schwarzian' from 
\cite{Kang2011}. In \cite{Sheffield2010}, an analogous object is called `AC surface'.

Analogously to vector fields in Section
\ref{Section: Vector fields and coordinate transform}
we define the \emph{pushforward of a pre-pre-Schwarzian} by
\begin{equation}
 G_*^{-1} \eta^{\psi}(z) : = 
 \eta^{\psi \circ G}(z) =
 \eta^{\psi}( {G}^{\psi}(z) ) 
 + \mu \log \left({G}^{\psi}\right)'(z)
 + \mu^* \log \overline{ \left({G}^{\psi}\right)'(z)}
 \label{Formula: G eta(z) = eta(G(z)) + mu log G'(z) + ...}
\end{equation}

We are interested in two special cases. The first one corresponds to $\mu=\mu^*=\gamma/2\in\mathbb{R}$, and
\begin{equation}
 G_*^{-1} \eta^{\psi}(z) = 
 \eta^{\psi}( {G}^{\psi}(z) ) 
 + \gamma \log \left| {G}^{\psi}{}'(z) \right|.
 \label{Formula: tilde eta = eta + gamma log}
\end{equation}
The second one is $\mu=i\chi/2,~\mu^*=-i\chi/2,~\chi\in\mathbb{R}$, and
\begin{equation}
 G_*^{-1} \eta^{\psi}(z) = 
 \eta^{\psi}( {G}^{\psi}(z) ) 
 - \chi \arg {G}^{\psi}{}'(z).
 \label{Formula: tilde eta = eta - chi arg}
\end{equation}
In both cases $\eta$ can be chosen real in all charts. Moreover, if the pre-pre-Schwarzian is represented by a real-valued function  it is one of two above forms in all charts.
 
A ($\mu,\mu^*$)-pre-pre-Schwarzian can be obtained from a vector field $v$ by the relation
\begin{equation}
 \eta^{\psi}(z) \equiv -\mu \log v^{\psi}(z) - \mu^* \log \overline{v^{\psi}(z)}. 
 \label{Formula: eta = -mu log v - mu log v}
\end{equation}
For  two special cases above we have
\begin{equation*}
 \eta = -\gamma \log |v|
\end{equation*} 
and
\begin{equation*}
 \eta = \chi \arg v,
\end{equation*} 
where we drop the upper index $\psi$.
 
In Section 
\ref{Section: Linear functionals and change of coordinates},
we obtain the transformation rules 
(\ref{Formula: tilde eta = eta + gamma log})
by taking logarithm of a $(1,1)$-differential.
The second type of the real pre-pre-Schwarzian is connected to a sort of an imaginary analog of the metric. 

We can define the \emph{Lie derivative} of $X$ as
\begin{equation*}
 \Lc_{v} X^{\psi} (z):=
 \left. \frac{\de}{\de \alpha} {H_t^{-1}[v]}_* X^{\psi} (z) \right|_{t=0},
\end{equation*}
where $X$ can be a pre-pre-Schwarzian, a vector field, or even an object with a more general transformation rule, see (\ref{Formula: d G = sigma G ds in a chart})
for any two holomorphic vector fields $v$.
If $X$ is a holomorphic vector field $w$, then 
\begin{equation}
	\Lc_v w^{\psi} = 
	[v,w]^{\psi}(z) := 
	v^{\psi}(z) w^{\psi}{}'(z) - v^{\psi}{}'(z) w^{\psi}(z),
 \label{Formula: L_v w = [v,w] := v w' - v' w}
\end{equation}
see 
(\ref{Formula: G v = 1/G' v(G)}). 

If $X$ is a pre-pre-Schwarzian, then
\begin{equation}
 \Lc_{v} \eta^{\psi} (z) = 
 v^{\psi}(z) \de_z \eta^{\psi}(z) + 
 \overline{v^{\psi}(z)} \de_{\bar z} \eta^{\psi}(z) 
 + \mu \,{v^{\psi}}'(z) + \mu^* \overline{{v^{\psi}}'(z)}. 
 \label{Formula: L eta = v d eta + chi dv}
\end{equation}
Here and further on, we use notations
\begin{equation*}\begin{split}
 \de_z := \frac12 \left( \frac{\de}{\de x} - i \frac{\de}{\de y} \right),\quad
 \de_{\bar z} := \frac12 \left( \frac{\de}{\de x} + i \frac{\de}{\de y} \right),
\end{split}\end{equation*}
and $f'(z):=\de_z f(z)$ for a holomorphic function $f$.

If $\mu = \mu^* = 0$, then $\eta$ is called a \emph{scalar}. It is remarkable that if $\eta$ is a pre-pre-Schwarzian, then $\Lc_v \eta$ is a scalar anyway, which is stated in the following lemma.

\begin{lemma} 
Let $\eta$ be a pre-pre-Schwarzian of order $\mu,\mu^*$, let $v$ be a holomorphic vector field, and let $G$ be a conformal self-map. Then 
\begin{equation*}
 G^{-1}_* (\Lc_v \eta)^{\psi}(z) = (\Lc_v \eta)^{\psi} \circ G^{\psi}(z)
\end{equation*}
or in the infinitesimal form,
\begin{equation}
 \Lc_w (\Lc_v \eta)^{\psi}(z) = 
 w^{\psi}(z) \de_z (\Lc_v \eta)^{\psi}(z) +  
 \overline{w^{\psi}(z)} \de_{\bar z} (\Lc_v \eta)^{\psi}(z).
 \label{Formula: L L phi = w de L phi}
\end{equation}
\end{lemma}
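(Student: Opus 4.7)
The plan is to verify the infinitesimal statement \eqref{Formula: L L phi = w de L phi} directly by showing that the collection $\{(\Lc_v\eta)^{\psi}\}$ transforms as a scalar under change of chart; the finite statement then follows by applying this to $\psi$ and $\psi\circ G$. The key technical fact is that, although the pre-pre-Schwarzian $\eta$ carries a cocycle $\mu\log\tau'+\mu^*\log\bar\tau'$ and the vector field $v$ carries the factor $1/\tau'$, the anomalous pieces produced by these two transformations cancel exactly in the combination that defines $\Lc_v\eta$.

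Concretely, I would pick two chart maps $\psi,\tilde\psi$ and let $\tau=\psi\circ\tilde\psi^{-1}$. Using \eqref{Formula: tilde phi = phi - chi arg} together with the fact that $\tau$ is holomorphic (so $\de_{\tilde z}\log\overline{\tau'}=0$ and $\de_{\bar{\tilde z}}\log\tau'=0$), I would compute
\begin{equation*}
\de_{\tilde z}\eta^{\tilde\psi}(\tilde z)
=\tau'(\tilde z)\,\de_z\eta^{\psi}(\tau(\tilde z))+\mu\,\frac{\tau''(\tilde z)}{\tau'(\tilde z)},
\end{equation*}
and the conjugate formula with $\mu^*$. Combining this with \eqref{Formula: tilde v = 1/dtau v(tau)} gives
\begin{equation*}
v^{\tilde\psi}(\tilde z)\,\de_{\tilde z}\eta^{\tilde\psi}(\tilde z)
=v^{\psi}(\tau(\tilde z))\,\de_z\eta^{\psi}(\tau(\tilde z))
+\mu\,\frac{v^{\psi}(\tau(\tilde z))\,\tau''(\tilde z)}{\tau'(\tilde z)^{2}}.
\end{equation*}
On the other hand, differentiating $v^{\tilde\psi}=v^{\psi}\circ\tau/\tau'$ yields
\begin{equation*}
\mu\,(v^{\tilde\psi})'(\tilde z)
=\mu\,(v^{\psi})'(\tau(\tilde z))
-\mu\,\frac{v^{\psi}(\tau(\tilde z))\,\tau''(\tilde z)}{\tau'(\tilde z)^{2}}.
\end{equation*}
Adding these two identities, the $\tau''$-terms cancel, and the analogous cancellation happens for the antiholomorphic/$\mu^*$ half. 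In view of the explicit formula \eqref{Formula: L eta = v d eta + chi dv}, this is precisely the statement
\begin{equation*}
(\Lc_v\eta)^{\tilde\psi}(\tilde z)=(\Lc_v\eta)^{\psi}(\tau(\tilde z)),
\end{equation*}
i.e.\ $\Lc_v\eta$ is a scalar.

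Once scalarity is established, the finite identity $G^{-1}_{*}(\Lc_v\eta)^{\psi}(z)=(\Lc_v\eta)^{\psi}(G^{\psi}(z))$ is just the definition of pushforward on a scalar (cf.\ \eqref{Formula: G eta(z) = eta(G(z)) + mu log G'(z) + ...} with $\mu=\mu^*=0$), obtained by applying the preceding computation to the pair of charts $\psi$ and $\psi\circ G$. The infinitesimal form \eqref{Formula: L L phi = w de L phi} then follows by setting $G=H_t[w]$ and differentiating at $t=0$, using the scalar specialization of \eqref{Formula: L eta = v d eta + chi dv}.

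The only real obstacle is bookkeeping: keeping straight which variables are holomorphic vs.\ antiholomorphic in the chain rule for $\de_{\tilde z}$ and $\de_{\bar{\tilde z}}$ acting on $\log\tau'$ and $\log\overline{\tau'}$, and tracking the signs so that the $\mu\,\tau''/\tau'^2$ term produced by differentiating $\eta$ cancels, rather than doubles, the one produced by differentiating $v$. No deeper idea is needed; the lemma is essentially the observation that the pre-pre-Schwarzian cocycle is a coboundary in the Lie-derivative direction.
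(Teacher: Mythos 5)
Your proposal is correct and is essentially the paper's own argument: since the pushforward $G_*^{-1}$ is defined in this paper precisely as the change to the chart $\psi\circ G$, your chart-change computation with general $\tau$ is the paper's computation with $\tau=G^{\psi}$, and the cancellation you identify between the $\mu\,\tau''/\tau'^2$ term coming from $\de\log\tau'$ and the one coming from differentiating $v^{\psi}\circ\tau/\tau'$ is exactly the cancellation carried out in the paper's displayed calculation. No substantive difference.
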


\begin{proof}
The straightforward calculations imply
\begin{equation*}\begin{split}
 &G^{-1}_* (\Lc_v \eta)^{\psi}(z) = G^{-1}_* \left(
  v^{\psi}(z) \de_z \eta^{\psi}(z) + \overline{v^{\psi}(z)} \de_{\bar z} \eta^{\psi}(z) 
  + \mu {v^{\psi}}'(z) + \mu^* \overline{{v^{\psi}}'(z)} 
 \right)
 =\\=&
 \frac{v^{\psi}\circ G^{\psi}(z) }{{G^{\psi}}'(z)} \de_z 
  \left( \eta^{\psi}\circ G^{\psi}(z) 
  + \mu \log {G^{\psi}}'(z) + \mu^* \log \overline{{G^{\psi}}'(z)} \right) 
 +\\+&
 \overline{\frac{v^{\psi}\circ G^{\psi}(z) }{{G^{\psi}}'(z)}}
  \de_{\bar z} \left( \eta^{\psi}\circ G^{\psi}(z)
  + \mu \log {G^{\psi}}'(z) + \mu^* \log \overline{{G^{\psi}}'(z)} \right) 
 +\\+&
 \mu \de_z \frac{v^{\psi}\circ G^{\psi}(z)}{{G^{\psi}}'(z)} 
 + \mu^* \de_{\bar z} \frac{\overline{v^{\psi}\circ G^{\psi}(z)}}{\overline{{G^{\psi}}'(z)}} 
 =\\=&
 v^{\psi}\circ G^{\psi}(z) (\de \eta^{\psi})\circ G^{\psi}(z) 
 + \mu \frac{v^{\psi}\circ G^{\psi}(z) }{{G^{\psi}}'(z)}
 \de_z \log {G^{\psi}}'(z)
 + 0 
 +\\+&
 \overline{v^{\psi}\circ G^{\psi}(z)} (\bar \de \eta^{\psi})\circ G^{\psi}(z) 
 + 0 + \mu^* \overline{ \frac{v^{\psi}\circ G^{\psi}(z) }{{G^{\psi}}'(z)} }
 \de_{\bar z} \log \overline{{G^{\psi}}'(z)} 
 +\\+&
 \mu {v^{\psi}}' \circ G^{\psi}(z)  
 - \mu \frac{v^{\psi}\circ G^{\psi}(z) {G^{\psi}}''(z) }{{G^{\psi}}'(z)^2} 
 + \mu^* \overline{{v^{\psi}}' \circ G^{\psi}(z)}  
 - \mu^* \frac{\overline{v^{\psi}\circ G^{\psi}(z)} \overline{{G^{\psi}}''(z)} }{\overline{{G^{\psi}}'(z)^2}} 
 =\\=&
 \left( v^{\psi} \de \eta^{\psi} \right) \circ G^{\psi}(z) 
 + \mu {v^{\psi}}' \circ G^{\psi}(z)
 +\\+&
 \left( \overline{v^{\psi}} \bar \de \eta^{\psi} \right) \circ G^{\psi}(z)
 + \mu^* \overline{{v^{\psi}}' \circ G^{\psi}(z)}  
 =\\=&
 \left( 
  v^{\psi} \de \eta^{\psi} + \overline{ v^{\psi}} \bar \de \eta^{\psi}
  + \mu {v^{\psi}}' + \mu^* \overline{{v^{\psi}}'} 
 \right) \circ G^{\psi}(z)
 =\\=&
 (\Lc_v \eta)^{\psi} \circ G^{\psi}(z) 
\end{split}\end{equation*}
\end{proof}

\subsection{Test functions}
\label{Section: Test function}

We will define the Schwinger functions $S_n$ and the Gaussian free field $\Phi$
in terms of linear functionals over some space of smooth test functions defined
in what follows. 

Let $\Hc_s^{\psi}$ 
\index{$\Hc_s$}
be a linear space of real-valued smooth
functions $f\colon D^{\psi} \map \mathbb{R}$ 
in the domain 
$D^{\psi}:=\psi(\Dc)\subset\C$ 
with compact support  equipped with the topology of homogeneous
convergence of all derivatives on the corresponding compact, namely, the
topology is generated by following collection of neighborhoods of the zero
function
\begin{equation*}\begin{split}
	&U^{\psi}_{K} := 
	\bigcap\limits_{n,m=0,1,2,\dotso}  
	\{f(z)\in C^{\infty}(D) \colon \supp f \subseteq K \wedge \left| \de^n \bar
	\de^m f^{\psi}(z)\right| <\varepsilon_{n,m},\quad z\in K \},\\
	&\varepsilon_{n,m}>0,\quad n,m = 0,1,2\dotso,
\end{split}\end{equation*}
where $K\subset D^{\psi}$ is any compact subset of $D^{\psi}$.

We call $f^{\psi}\in\Hc_s^{\psi}$ the \emph{test functions}
\index{test function} 
and assume that they are $(1,1)$-differentials 
\begin{equation}
 f^{\tilde \psi}
 (\tilde z) = \tau'(\tilde z) \overline{\tau'(\tilde z)} f
 ^{\psi}(\tau(\tilde z)),\quad
 \tau:=\psi \circ \tilde \psi^{-1},
 \label{Formula: f^tilde psi = tau^2 f^psi(tau)}
\end{equation} 
It is straightforward to check that any transition map 
$\tau$ induces a homeomorphism between $\Hc_s^{\psi}$ and 
$\Hc_s^{\tilde \psi}$.
Thereby, we will drop the index $\psi$ at $\Hc_s$, and  consider 
the space $\Hc_s$ as a topological space of smooth ($1,1$)-differentials with
compact support.

The space $\Hc_s$ does not match  all cases of coupling. For the couplings with radial SLE we use spaces $\Hc_{s,b}$ and
$\Hc_{s,b}^{\pm}$
defined in corresponding Sections
\ref{Section: Radial SLE with drift}
and
\ref{Section: Coupling to twisted GFF}.
Henceforth, we denote by $\Hc$
any of those nuclear spaces 
$\Hc_s$, $\Hc_{s,b}$, or
$\Hc_{s,b}^{\pm}$ for shortness.
An important property of $\Hc$ is  \emph{nuclearity},
see
\cite{Gelfand1964,Hida2008,Pietsch1972} 
which is necessary and sufficient to admit the uniform Gaussian measure
on the dual space $\Hc'$ (the GFF).

Constructing such a uniform Gaussian measure on a finite dimentional linear
space is a trivial problem, however, it is not possible on an
infinite-dimentional Hilbert space. On the other hand, if a space $\Hc$ is
nuclear as $\Hc_s$, then the dual space $\Hc'$ admits a uniform
Gaussian measure. A general recipy holds not only for Gaussian measures and is
given by the following theorem.

\begin{proposition} 
(\textbf{Bochner-Minols} \cite{Gelfand1964,Hida2008,SergioAlbeverio}) \\
 \label{Theorem: Bochner-Minols}
 Let $\Hc$ be a nuclear space, and let  
  $\hat \mu\colon \Hc\map \C $ be a functional (non-linear). 
 Then the following 3 conditions  
 \begin{enumerate} [1.]
  \item $\hat \mu$ is positive definite
  \begin{equation*}
   \forall \{z_1,z_2,\dotso z_n\}\in\C^n,~ 
   \forall \{f_1,f_2,\dotso f_n\}\in \Hc^n~\then
   \sum\limits_{1\leq k,l\leq n} z_k \bar z_l \hat \mu [f_k - f_l]\geq 0;	 	
  \end{equation*}
  \item $\hat \mu(0)=1$;
  \item $\hat \mu$ is continuous
 \end{enumerate}  
 are satisfied if and only if there exists a unique probability measure 
 $P_{\Phi}$ 
 on
 $(\Omega_{\Phi},\mathcal{F}_{\Phi},P_{\Phi})$ 
  for 
 $\Omega_{\Phi}=\Hc'$, 
 with $\hat \mu$ as a characteristic function
 \begin{equation}
  \hat \mu [f] := 
  \int\limits_{\Phi\in \Hc'} e^{(i\Phi[f])} P_{\Phi}(d\Phi), \quad \forall
  f \in \Hc.
  \label{Formula: hat mu = int exp iPhi dPhi 2}
 \end{equation}
 The corresponding $\sigma$-algebra $\mathcal{F}_{\Phi}$ is generated by the
 cylinder sets
 \begin{equation*}
  \{ F \in \Hc'\colon \quad F[f]\in B \},\quad 
  \forall f\in \Hc,\quad
  \forall \text{ Borel sets } B \text{ of } \mathbb{R}~. 
 \end{equation*} 
\end{proposition}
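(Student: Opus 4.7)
The plan is to handle the two directions separately. The forward implication (existence of $P_\Phi$ implies the three properties) is routine: positive definiteness follows by integrating the pointwise nonnegative quantity $\bigl|\sum_k z_k e^{i\Phi[f_k]}\bigr|^2$; $\hat\mu[0]=1$ is immediate from $P_\Phi(\Hc')=1$; and sequential continuity at any $f\in\Hc$ follows from dominated convergence, using that $f_n\to f$ in $\Hc$ gives $\Phi[f_n]\to\Phi[f]$ for each fixed $\Phi$, while $|e^{i\Phi[f]}|\leq 1$ furnishes the majorant.

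For the converse, I would first build the finite-dimensional marginals. For any finite subset $\{f_1,\dots,f_n\}\subset\Hc$, the restriction of $\hat\mu$ to $V=\mathrm{span}\{f_1,\dots,f_n\}\cong\mathbb{R}^n$ is continuous, positive definite, and normalized, so the classical Bochner theorem on $\mathbb{R}^n$ produces a unique Borel probability measure $\mu_V$ on $V^{*}\cong\mathbb{R}^n$ whose Fourier transform is $\hat\mu|_V$. Uniqueness makes the family $\{\mu_V\}$ automatically consistent under coordinate projections, and Kolmogorov's extension theorem then yields a finitely additive cylinder set function $\mu$ on the cylinder $\sigma$-algebra $\mathcal{F}_\Phi$ described in the statement, whose finite-dimensional images are exactly the $\mu_V$.

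The essential step, and the only one that genuinely uses nuclearity, is to promote $\mu$ to a countably additive measure on $\mathcal{F}_\Phi$. The plan is to combine continuity of $\hat\mu$ at $0$ with the defining feature of a nuclear space: every continuous seminorm is dominated by one of Hilbert-Schmidt type, arising from a chain of Hilbert-space completions of $\Hc$ whose linking embeddings are Hilbert-Schmidt. Given $\varepsilon>0$, continuity supplies a seminorm $p$ with $|1-\hat\mu[f]|<\varepsilon$ whenever $p(f)\leq 1$; replacing $p$ by a dominating Hilbert-Schmidt seminorm $q$ and integrating $1-\mathrm{Re}\,\hat\mu[f]$ against a centered Gaussian with covariance governed by $q$ produces the Minlos tail estimate forcing $\mu$ to concentrate on balls of the Hilbert dual associated with $q$. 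Since the Hilbert-Schmidt factorization makes those balls compact when viewed in $\Hc'$, this provides the tightness needed to verify $\sigma$-additivity on generating cylinder sets.

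I expect this tightness estimate to be the main obstacle: the algebraic scaffolding (Bochner on $\mathbb{R}^n$, Kolmogorov consistency, definition of the cylinder $\sigma$-algebra) is essentially bookkeeping, and uniqueness is automatic because $\hat\mu$ already determines every finite-dimensional marginal. The nontrivial content is that the three hypotheses on $\hat\mu$, combined with the Hilbert-Schmidt factorization provided by nuclearity, force the cylinder measure to concentrate on a countable exhaustion by weakly compact subsets of $\Hc'$.
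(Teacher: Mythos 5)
The paper gives no proof of this proposition: it is quoted as the classical Bochner--Minlos theorem, with the argument delegated to the cited references (Gel'fand--Vilenkin, Hida--Si, Albeverio et al.), so there is no internal proof to compare yours against. Your outline reproduces the standard proof from exactly those sources: the forward direction by expanding $\bigl|\sum_k z_k e^{i\Phi[f_k]}\bigr|^2$ under the integral and by dominated convergence, and the converse by applying Bochner's theorem on each finite-dimensional subspace, assembling the resulting marginals into a consistent cylinder set function, and then upgrading to countable additivity via the Minlos tightness estimate, in which continuity of $\hat\mu$ at $0$ together with a Hilbert--Schmidt dominating seminorm (the point where nuclearity enters) forces the cylinder measure to concentrate on weak-$*$ compact balls in $\Hc'$. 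As a sketch this is correct and correctly isolates the tail estimate as the only nontrivial step; the remaining details (the precise Gaussian averaging inequality and the verification that $\sigma$-additivity on generating cylinders follows from tightness) are carried out in full in the cited references.
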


The random law on $\Hc'$ is called uniform with respect to a bilinear functional
$B\colon \Hc\times\Hc\map \mathbb{R}$ 
if the characteristic function
$\hat \mu$ 
is of the form
\begin{equation*}
	\hat \mu[f] = e^{-\frac12 B[f,f]},\quad f\in \Hc.
\end{equation*}  
We consider the class of bilinear functionals we work with in Section
\ref{Section: Fundamental solution to the Laplace-Beltrami equation}.
First, we study the linear and bilinear functionals over $\Hc_s$ and
their transformation properties.

\subsection{Linear functionals and change of coordinates}
\label{Section: Linear functionals and change of coordinates}

In this section, we consider linear functionals over $\Hc_s$ and $\Hc$ that
transform as pre-pre-Schwarzians.

Let 
$\eta^{\psi}\in\Hc_s^{\psi}{}'$
be a linear functional over 
$\Hc_s^{\psi}$
for a given chart $\psi$.
The functional is called regular if there exists a locally integrable function 
$\eta^{\psi}(z)$ 
such that 
\begin{equation*}
 \eta^{\psi}[f] := 
 \int\limits_{D^{\psi}} \eta^{\psi}(z) f^{\psi}(z) l(dz),
\end{equation*}
where $l$ is the Lebesgue measure on $\C$. We use the brackets $[\cdot]$ for
functionals and the parentheses $(\cdot)$ for corresponding functions
(kernels).

We assume that $f$ transforms according to
(\ref{Formula: f^tilde psi = tau^2 f^psi(tau)}).
If $\eta^{\psi}(z)$ is a scalar, then the number $\eta^{\psi}[f]\in\mathbb{R}$
does not depend on the choice of the chart $\psi$. Indeed, for any choice of
another chart $\tilde \psi$, we have
\begin{equation*}\begin{split}
 \eta^{\tilde \psi}[f] :=& 
 \int\limits_{D^{\tilde \psi}} \eta^{\tilde \psi}(\tilde z) f^{\tilde \psi}(\tilde z) l(d\tilde z)=
 \int\limits_{D^{\tilde \psi}} \eta^{\psi}(\tau(\tilde z)) f^{\psi}(\tau(\tilde z))
 |\tau'(\tilde z)|^2  l(d\tilde z)
 =\\=&
 \int\limits_{D^{\psi}} \eta^{\psi}(z) f^{ \psi}(z) l(dz)
 =\eta^{\psi}[f] ,
\end{split}\end{equation*}

If $\eta^{\psi}(z)$ is a pre-pre-Schwarzian, then
\begin{equation}\begin{split}
 \eta^{\tilde \psi}[f] =& 
 \int\limits_{D^{\tilde \psi}} \eta^{\tilde \psi}(\tilde z) f^{\tilde \psi}(\tilde z) l(d\tilde z)
 =\\=&
 \int\limits_{D^{\tilde \psi}} \left(
  \eta^{\psi}(\tau(\tilde z)) 
  + \mu \log \tau'(z) + \mu^* \overline{\log \tau'(z)}
 \right)  
 f^{\psi}(\tau(\tilde z)) |\tau'(\tilde z)|^2  l(d\tilde z)
 =\\=&
 \int\limits_{D^{\psi}} \left(
  \eta^{\psi}(z) 
  - \mu \log \tau^{-1}{}'(z) - \mu^* \overline{\log \tau^{-1}{}'(z)}
 \right)  
 f^{\psi}(z)  l(d z)
 =\\=&
 \eta^{\psi}[f] 
 - \int\limits_{D^{\psi}} \left(
  \mu \log \tau^{-1}{}'(z) + \mu^* \overline{\log \tau^{-1}{}'(z)}
 \right)  
 f^{\psi}(z) l(d z)
 \label{Formula: eta^tilde psi[f] = eta psi[f] - mu int ...}
\end{split}\end{equation}
according to 
\eqref{Formula: G eta(z) = eta(G(z)) + mu log G'(z) + ...}.

If $\eta^{\psi}$ is not a regular pre-pre-Schwarzian but just a functional from 
$\Hc_s'$ we can consider the last line of
\eqref{Formula: eta^tilde psi[f] = eta psi[f] - mu int ...}
as a definition of the transformation rule for $\eta[f]$ from a chart $\psi$
to a chart $\tilde \psi$.

Let us denote by $\Hc_s'$ the linear space of pre-pre-Schwarzians as above.
Consider now the pushforward operation $G_*^{-1}$
on $(1,1)$-differentials $f$ defined by
\begin{equation*}
	G_*^{-1} f^{\psi}(z) := 
	\left| G^{\psi} {}'(z)\right|^2 
	f^{\psi} \left(G^{\psi} (z) \right).
\end{equation*}
The right-hand side is well-defined only for
$\psi(\Dc\setminus\K)$.
Here we define $F_*$ only on a subset of $\Hc_s$ of test functions that
are supported in $\Dc\setminus\K=F^{-1}(\Dc)$.

Define the pushforward operation by
\begin{equation}\begin{split}
	&G_*^{-1} \eta^{\psi} [f] 
	= \eta^{\psi\circ G} [f] 
	=\\=&
	\eta^{\psi}[G_* f] 
	+	\int\limits_{\supp f^{\psi}} 
	\left( 
		\mu \log G^{\psi}{}'(z) +
		\mu^{*}	\overline{\log G^{\psi}{}'(z)}
	\right)
	f^{\psi}(z) l(dz),\\
	&	f\in\Hc_s\colon\quad \supp f\subset G^{-1}(\Dc).
	\label{Formula: F eta f= eta F-1 f}
\end{split}\end{equation}
It can be understood as a pushforward $F_*:\Hc_s'\map\Hc_s'$ 
in the dual space.

Functionals over the space $\Hc_s$ are differentiable infinitely many times. 
According to 
\eqref{Formula: L eta = v d eta + chi dv}
the Lie derivative is defined by
\begin{equation*}\begin{split}
	\Lc_v \eta[f] =& 
	\left. \frac{\de}{\de s} {H_s^{-1}[v]}_* \eta^{\psi} [f] \right|_{s=0} 
	=\\=&
	-\eta^{\psi}[\Lc_{v} f] 
	+ \int\limits_{\supp f^{\psi}} 
	\left(
		\mu v^{\psi}{}'(z) + \mu^* \overline{v^{\psi}{}'(z)} 
	\right)
	f^{\psi}(z) l(dz),
\end{split}\end{equation*}
where
\begin{equation*}\begin{split}
	\Lc_v f^{\psi}(z) =& 
	\left. \frac{\de}{\de s} {H_s^{-1}[v]}_* f^{\psi} \right|_{s=0} 
	=\\=&
	v^{\psi}(z) \de_z f^{\psi}(z) + \overline{v^{\psi}(z)} \de_{\bar z} f^{\psi}(z)
	+ v^{\psi}{}'(z) f^{\psi}(z) + \overline{v^{\psi}{}'(z)} f^{\psi}(z).
\end{split}\end{equation*}

\subsection{Fundamental solution to the Laplace-Beltrami equation}
\label{Section: Fundamental solution to the Laplace-Beltrami equation}

In this section, we consider linear continuous functionals with respect to
each argument in $\Hc_s$. An important example is the Dirac functional
\begin{equation}
	\delta_{\lambda}[f,g] := 
	\int\limits_{\psi(\Dc)} f^{\psi}(z) g^{\psi}(z) 
	\frac{1}{\lambda^{\psi}(z)} l(dz),\quad 
	f,g\in\Hc_s,
	\label{Formual: delta_lambda[f,g] := ...}
\end{equation}
where $\lambda(z) l(dz)$ is  some measure on $D^{\psi}$, which is
absolutely continuous with respect to the Lebesgue measure $l(dz)$. 
The Radon-Nikodym
 derivative $\lambda^{\psi}(z)$ transforms as a
$(1,1)$-differential:
\begin{equation*}
	\lambda^{\tilde \psi}
 (\tilde z) 
 = \tau'(\tilde z) \overline{\tau'(\tilde z)} \lambda^{\psi}
 (\tau(\tilde z))
 ,\quad
 \tau:=\psi \circ \tilde \psi^{-1}.
\end{equation*}
It is easy to see that the right-hand side of
\eqref{Formual: delta_lambda[f,g] := ...}
does not depend on the choice
of $\psi$.
 
We call the functional regular if there exists a function 
$B^{\psi}(z,w)$ on $\psi^{\Dc}\times\psi^{\Dc}$ such that
\begin{equation}
	B^{\psi}[f,g] := 
	\int\limits_{\psi(\Dc)} \int\limits_{\psi(\Dc)} B^{\psi}(z,w )
	f^{\psi}(z) g^{\psi}(w)
 	l(dz) l(dw),\quad 
	f,g\in\Hc_s. 
	\label{Formula: H = int H}
\end{equation}
Let us use the same convention about the brackets and parentheses as for the linear
functionals. We consider only scalar regular bilinear functionals and
require the transformation rules
\begin{equation*}
 B^{\tilde \psi}(\tilde z,\tilde w) = 
 B^{\psi} (\tau(\tilde z),\tau(\tilde w)),\quad 
 \tau=\psi\circ \tilde \psi^{-1},\quad 
 z,w\in\tilde\psi(\Dc).
\end{equation*}
Thus, the right-hand side of 
(\ref{Formula: H = int H})
does not depend on the choice 
of the chart $\psi$ and we can drop the index $\psi$ in the left-hand side.

The pushforward is defined by 
\begin{equation}
	F_* B^{\psi} (z,w) 
	= B^{\psi\circ F} (z,w)
	:= B^{\psi}( \left(F^{\psi} \right)^{-1}(z),\left(F^{\psi} \right)^{-1}(w))
	,\quad z,w\in \Im(F^{\psi}).
	\label{Formula: G B(z,w) = B(G(z),G(w))},
\end{equation}
which becomes
\begin{equation*}
	F_* B^{\psi} [f,g] 
	= B^{\psi\circ F} [f,g]
	:= B^{\psi}[ F^{-1}_* f,F^{-1}_* g ]
	,\quad f,g\in\Hc_s\colon\supp f\subset \Im(F),
\end{equation*}
for an arbitrary functional $F$,
The same remarks  remain true in this case as in the previous section for $\eta$.

Define now the Lie derivative in the same way as before
\begin{equation}\begin{split}
	&\Lc_{v} B^{\psi}(z,w)
	:=\left. \frac{\de}{\de s} 
	H_s[v]^{-1}_* B^{\psi} (z,w) \right|_{s=0} 
	=\\=&
	v^{\psi}(z)\de_z B^{\psi}(z,w) + 
	\overline {v^{\psi}(z)}\de_{\bar z} B^{\psi}(z,w) + 
	v^{\psi}(w)\de_w B^{\psi}(z,w) + 
	\overline {v^{\psi}(w)}\de_{\bar w} B^{\psi}(z,w). 
	\label{Formula: L Gamma = ...}
\end{split}\end{equation}
We remark that $\Lc_v B$ is also scalar in two variables.
Functionals $\delta_{\lambda}$ and $B$ are both scalar and continious with
respect to each variable.

Define the Laplace-Beltrami operator $\Delta_{\lambda}$ as
\begin{equation*}
	{\Delta_{\lambda}}_1 B^{\psi}(z,w)
	:= -\frac{4}{\lambda^{\psi}(z)} \de_z \de_{\bar z} B^{\psi}(z,w),
\end{equation*}   
where the lower index `$1$' means that the operator acts only with respect to
the first argument.

Let a regular bilinear functional $\Gamma_{\lambda}$ be a solution to the
equation
\begin{equation}
	{\Delta_{\lambda}}_1 \Gamma_{\lambda}[f,g] 
	= 2 \pi~ \delta_{\lambda}[f,g],\quad \Gamma_{\lambda}[f,g]
	=\Gamma_{\lambda}[g,f],
	\quad f,g\in\Hc_s.
	\label{Formula: Delta Gamma = delta}
\end{equation}
The boundary conditions will be fixed later.
This equation is conformally invariant in the sense that if 
$\Gamma_{\lambda}^{\psi}(z,w)$ is a solution on a chart 
$\psi$, then 
\begin{equation*}
 \Gamma_{\lambda}^{\psi}(\tau(\tilde z),\tau(\tilde w)) = 
 \Gamma^{\tau^{-1}\circ \psi}_{\lambda}(z,w)
\end{equation*}
is a solution in the chart 
$\tau^{-1} \circ \psi$.

The solution 
$\Gamma_{\lambda}^{\psi}(z,w)$ 
is a collection of smooth and harmonic functions on 
$\psi(\Dc)\times \psi(\Dc)\setminus\{z\times w\colon z=w\}$ 
of  general form
\begin{equation}
	\Gamma_{\lambda}^{\psi}(z,w)=-\frac12\log (z-w)(\bar z-\bar w)+H^{\psi}(z,w)~,
	\label{Formula: Gamma = Log + H}
\end{equation}
where $H^{\psi}(z,w)$ is an arbitrary symmetric harmonic function with respect
to each variable that is defined by the boundary conditions and will be
specified in what follows.

It is straightforward to verify that the function $\Gamma_{\lambda}^{\psi}(z,w)$
does not depend on the choice of $\lambda$ because the identity
\eqref{Formula: Delta Gamma = delta} 
in the integral form becomes
\begin{equation*}\begin{split}
	&\int\limits_{\psi(\Dc)} \int\limits_{\psi(\Dc)}
		-\frac{4}{\lambda^{\psi}(z)} \de_z \de_{\bar z} 
	\Gamma^{\psi}(z,w)
	f^{\psi}(z) g^{\psi}(w) l(dz)  l(dw) 
	=\\=& 
	\int\limits_{\psi(\Dc)} f^{\psi}(z) g^{\psi}(z) 
		\frac{1}{\lambda^{\psi}(z)} l(dz).
\end{split}\end{equation*}
The change $\lambda\map \tilde \lambda$ is equivalent to the change 
$f^{\psi}(z)\map \frac{\lambda^{\psi}(z)}{\tilde\lambda^{\psi}(z)} f^{\psi}(z)$. 
We will drop the lower index $\lambda$ in $\Gamma_{\lambda}$ in what follows.
The fundamental solutions to the Laplace equation are also known as 
\emph{Green's functions} (for the free field).
\index{Green's function}

\begin{example} \textbf{Dirichlet boundary conditions.}
\label{Example: Dirichlet boundary conditions Gamma}
Let us denote by  $\Gamma_D$  the solution $\Gamma$ to
(\ref{Formula: Delta Gamma = delta}) 
satisfying the zero boundary conditions, namely,
\begin{equation*}
  \left. \Gamma_D^{\HH}(z,w) \right|_{z\in\mathbb{R}}=0,\quad
  \lim_{z\rightarrow \infty}\Gamma_D^{\HH}(z,w) = 0,\quad w\in \HH.
\end{equation*}
Then, $\Gamma_D$ admits the form
\begin{equation}
  \Gamma_D^{\HH}(z,w):=-\frac12\log
  \frac{(z-w)(\bar z-\bar w)}{(z-\bar w)(\bar z-w)},
  \label{Formula: Gamma_D = Log...} 
\end{equation}
and possesses the property of symmetry with respect to all M\"obious automorphisms 
$H:\Dc\map\Dc$,
\begin{equation*}
	H_* \Gamma_D = \Gamma_D
\end{equation*}
or
\begin{equation}
  \Lc_{\sigma} \Gamma_D(z,w) = 0~,\quad 
  \forall \text{ complete vector field } \sigma~.
  \label{Formula: Lv Gamma_D = 0}
\end{equation}
\end{example}

\begin{example} \textbf{Combined Dirichlet-Neumann boundary conditions.}
\label{Example: Gamma: Combined Dirichlet-Neumann boundary conditions}
  Let $\Gamma_{DN}$ denote the solution to
 (\ref{Formula: Delta Gamma = delta}) 
 satisfying the following boundary conditions in the strip chart 
 \begin{equation*}\begin{split}
	&\left. \Gamma_{DN}^{\SSS}(z,w) \right|_{z\in\mathbb{R}}=0, \quad
  \left. \de_y\Gamma_{DN}^{\SSS}(x+i y,w) \right|_{y=\pi }=0, \quad
  x\in\mathbb{R},\\
  &\lim_{z\rightarrow \infty\wedge \Re z>0}\Gamma_{DN}^{\SSS}(z,w) = 0,\quad
  \lim_{z\rightarrow \infty\wedge \Re z<0}\Gamma_{DN}^{\SSS}(z,w) = 0,\quad
  w\in \HH.
\end{split}\end{equation*}
We consider this case in Section
\ref{Section: Coupling to GFF with Dirichlet-Neumann boundary condition}
and the exact form of $\Gamma_{DN}$ is given by
\eqref{Formula: G_DN^S = ...}.
It is not invariant with respect to 
all M\"obious automorphisms but it is invariant if the automorphism
preserves the points of change of the boundary conditions, which are
$\pm \infty$ in the strip chart.
\end{example}

We will consider another example ($\Gamma_{\text{tw},b}$) in Section
\ref{Section: Coupling to twisted GFF}

\subsection{Gaussian free field}
\label{Section: Gaussian free field}

\begin{definition}
For some nuclear space of smooth functions $\Hc$, let   
the linear functional $\eta$ and some Green's functional
$\Gamma$ be given.
Assume in Theorem
\ref{Theorem: Bochner-Minols}
\begin{equation}
  \hat \mu[f] := \exp{\left(-\frac12 \Gamma[f,f]+ i\eta[f] \right)},
  \quad f\in\Hc.
  \label{Formula: GFF chracteristic function 2}
\end{equation}
Then the $\Hc'$-valued random variable $\Phi$ is called the
\emph{Gaussian free field (GFF)}.
\index{Gaussian free field (GFF)}
We will denote it by 
$\Phi(\Hc,\Gamma,\eta)$.
\index{$\Phi(\Hc,\Gamma,\eta)$}
\end{definition}

For convenience, we change the definition of the characteristic function from
(\ref{Formula: hat mu = int exp iPhi dPhi 2}) to
\begin{equation*}
 \hat\phi[f] := 
 \int\limits_{\Phi\in \Hc_s'} e^{\Phi[f]} P^{\Phi}(d\Phi)
 ,\quad \forall f \in \Hc,
\end{equation*}
and (\ref{Formula: GFF chracteristic function 2}) changes to
\begin{equation}
	\hat\phi[f] = e^{\left(\frac12 \Gamma[f,f]+\eta[f] \right)},
	\label{Formula: GFF chracteristic function}
\end{equation}
which is possible for the Gaussian measures.

The \emph{expectation} of a random variable $X[\Phi]$ ($X:\Hc' \map \C$) is
defined as
\begin{equation*}
 \Ev{X}:=\int\limits_{\Phi\in \Hc_s'} X[\Phi] P^{\Phi}(d\Phi).
\end{equation*}
 
An alternative and equivalent (see, for example \cite{Hida2008}) definition of
GFF can be formulated as follows:

\begin{definition}	
The \emph{Gaussian free field} 
$\Phi$ 
is a 
$\Hc'$-valued 
random variable, that is a map
$\Phi\colon \Hc	\times \Omega \map \mathbb{R}$ 
(measurable on 
$\Omega$ 
and continuous linear on the nuclear space
$\Hc$),
or a measurable map 
$\Phi\colon \Omega \map \Hc'$,
such that 
$\rm{Law}[\Phi[f]]=N\left(\eta[f],\Gamma[f,f]^{\frac12} \right),
~ f\in \Hc$,
i.e.,
it possesses the properties
\begin{equation*}
  \Ev{\Phi[f]}=\eta[f], \quad \forall~f\in \Hc,
 \end{equation*}
 \begin{equation*}
  \Ev{\Phi[f]\Phi[f]}=\Gamma[f,f]+\eta[f]\eta[f], \quad \forall~f\in \Hc~
\end{equation*}
for Green's bilinear positively defined functional $\Gamma$, and for a linear
functional $\eta$.
\end{definition}

The random variable $\Phi$ introduced this way transforms from one chart to
another according to the pre-pre-Schwarzian rule 
\begin{equation}
 \Phi^{\tilde \psi}[f] = \Phi^{\psi}[f] - 
 \int\limits_{\psi(\Dc)} \left( \mu \log \tau^{-1}{}'(z) + \mu^* \overline{ \log \tau^{-1}{}'(z)} \right) 
 f^{\psi}(z) l(dz),\quad \tau:=\psi \circ \tilde \psi^{-1},
 \label{Formula: Phi^tilde psi[f] = Phi^psi [f] - mu in log tau f ldz}
\end{equation}
due to the corresponding property
\eqref{Formula: eta^tilde psi[f] = eta psi[f] - mu int ...}
of $\eta$.

The pushforward can also be defined by
\begin{equation*}\begin{split}
	&G_*^{-1} \Phi^{\psi} [f] = 
	\Phi^{\psi}[G_* f] 
	+	\int\limits_{\supp f^{\psi}} 
	\left( 
		\mu \log G^{\psi}{}'(z) +
		\mu^{*}	\overline{\log G^{\psi}{}'(z)}
	\right)
	f^{\psi}(z) l(dz),\\
	& f\in\Hc_s\colon\quad \supp f\subset G^{-1}(\Dc)
\end{split}\end{equation*}
as well as the Lie derivative becomes
\begin{equation*}\begin{split}
	\Lc_v \Phi[f] =& 
	\left. \frac{\de}{\de s} {H_s^{-1}[v]}_* \Phi^{\psi} [f] \right|_{s=0} 
	=\\=&
	-\Phi^{\psi}[\Lc_{v} f] 
	+ \int\limits_{\supp f^{\psi}} 
	\left(
		\mu v^{\psi}{}'(z) + \mu^* \overline{v^{\psi}{}'(z)} 
	\right)
	f^{\psi}(z) l(dz),
\end{split}\end{equation*}

\begin{example}
Let $\Hc:=\Hc_s$,
$\Gamma := \Gamma_D$ (as in Example \ref{Formula: Gamma_D = Log...}), and let
$\eta^{\psi}(z) := 0$ in all charts $\psi$ ($\mu=\mu^*=0$). 
Then we call $\Phi(\Hc_s,\Gamma_D,0)$ the Gaussian free field with \emph{zero
boundary condition}.
\end{example}

\begin{example} 
Relax the previous example. Let $\eta^{\psi}$ be a harmonic function in
$D^{\psi}$ continuously extendable to the boundary $\de D^{\psi}$ if the
chart map $\psi$ can be extended to $\de\Dc$.
Then 
we call $\Phi$ the Gaussian free field with the \emph{Dirichlet boundary condition}.  
\end{example}

We can define the Laplace-Beltrami operator $\Delta_{\lambda}$ over $\Phi$ 
as well as the Lie derivative by
\begin{equation*}
	(\Delta_{\lambda} \Phi)[g]:=\Phi[\Delta_{\lambda} g]
	,\quad g\in\Hc,
\end{equation*}
where $\Delta_{\lambda}$ on a ($1,1$)-differential is defined by
\begin{equation*}
	\Delta_{\lambda} g^{\psi}(z):=-4\de_z \de_{\bar z}
	\frac{g^{\psi}(z)}{\lambda^{\psi}(z)}
\end{equation*}
in any chart $\psi$. 
If $\eta$ is harmonic the identity 
\begin{equation}\begin{split}
	&\Ev{(\Delta_{\lambda} \Phi)[g] \Phi[f_1]\Phi[f_2]\dotso\Phi[f_n]}
	=\\=&
	\sum\limits_{i=1,2,\dotso,n}
	\delta_{\lambda}[g,f_i] \,
	\Ev{\Phi[f_1]\Phi[f_2]\dotso \Phi[f_{i-1}]\Phi[f_{i+1}]...\dotso\Phi[f_n]}
	\label{Formula: E[DD Phi ] = delta}
\end{split}\end{equation}
is satisfied. Thereby, one can write heuristically
\begin{equation*}
	\Delta_{\lambda} \Phi(z) = 0
	,\quad z\not\in\supp f_1 \cup \supp f_2 \cup\dotso \cup\supp f_n.   
\end{equation*}

It turns out that the characteristic functional $\hat\phi$ is also a derivation functional for the  correlation functions.
Define the variational derivative over some functional $\nu$ as a map
$\frac{\delta}{\delta f}\colon  \nu\mapsto \frac{\delta}{\delta f} \nu$ to the set of functionals by 
\begin{equation*}
 \left(\frac{\delta}{\delta f} \nu \right) [g]:= 
  \left.\frac{\de}{\de \alpha} \nu [g+\alpha f] \right|_{\alpha=0},\quad \forall f,g\in \Hc.
\end{equation*}

If $\nu$ is such that $\nu[g+\alpha f]$ is an analytic function with respect to $\alpha$ for each $f$ and $g$, like $\hat \phi$, 
it is straightforward to see that for each $g,f_1,f_2,\dotso\in\Hc$,
\begin{equation*}
 \nu[g] = \nu[0],\quad g\in \Hc  
 \quad \Leftrightarrow \quad
 \left( \frac{\delta }{\delta f_1} \frac{\delta }{\delta f_2} \dotso \frac{\delta }{\delta f_n}
  \nu\right)[0] = 0,\quad n=1,2,\dotso.
\end{equation*}
Define the Schwinger functionals as
\begin{equation}
  S_n[f_1,f_2,\dotso ,f_n]:=
  \Ev{\Phi[f_1]\Phi[f_2]\dotso\Phi[f_n]}=
  \left( \frac{\delta }{\delta f_1} \frac{\delta }{\delta f_2} \dotso \frac{\delta }{\delta f_n}
  \hat\phi\right)[0]
 \label{Formula: S[f f...f] = d d...d mu}
\end{equation}
where $\hat\phi\colon \Hc\map \mathbb{R}$ is defined in 
(\ref{Formula: GFF chracteristic function}).

The identity 
\eqref{Formula: E[DD Phi ] = delta}
can be reformulated as 
\begin{equation*}
	\Ev{(\Delta_{\lambda}\Phi)[g] e^{\Phi[f]}} =
	\left(
		\delta_{\lambda}[g,f] + \eta [\Delta_{\lambda} f]
	\right) \hat \phi[f]
	,\quad f,g\in\Hc.
\end{equation*}

\subsection{The Schwinger functionals}

In this section, we consider the Schwinger functionals defined by
(\ref{Formula: S[f f...f] = d d...d mu}) 
and their derivation functional 
$\hat \phi$ in detail.

For any finite collection $\{f_1,f_2,\dotso,f_n\}$ of functions from $\Hc_s$ or
$H_{\Gamma}$, the collection of random variables 
$\{ \Phi[f_1],\Phi[f_2],\dotso,\Phi[f_n] \}$ 
has the multivariate normal distribution. Thus, we have
\begin{equation*}
  \Ev{\Phi[f_1]\Phi[f_2]\dotso\Phi[f_n]}=\sum\limits_{\text{partitions}} 
  \prod \limits_k \Gamma[f_{i_k},f_{j_k}]~,
\end{equation*}
for 
$\eta(z)\equiv 0$, 
where the sum is taken over all partitions of the set 
$\{1,2\dotso,n\}$ 
into disjoint pairs 
$\{i_k,j_k\}$.
In particular, the expectation of the product of an odd number of  fields is
identically zero.
For the general case ($\eta\not\equiv 0$) the Schwinger functionals are   
\begin{equation*}
 S[f_1,f_2,\dotso,f_n]:=
 \Ev{\Phi[f_1]\Phi[f_2]\dotso\Phi[f_n]}=
 \sum\limits_{\text{partitions}} 
   \prod \limits_k \Gamma[f_{i_k},f_{j_k}] \prod \limits_l \eta[f_{i_l}],
\end{equation*}
where the sum is taken over all partitions of the set 
$\{1,2\dotso,n\}$ 
into
disjoint non-ordered pairs 
$\{i_k,j_k\}$, 
and non-ordered single elements
$\{i_l\}$.
In particular,
\begin{equation*}\begin{split}
 S_1[f_1]=&\eta[f_1],\\
 S_2[f_1,f_2]=&\Gamma[f_1,f_2] + \eta[f_1]\eta[f_2],\\
 S_3[f_1,f_2,f_3]=&\Gamma[f_1,f_2]\eta[f_3] + 
  \Gamma[f_3,f_1] \eta[f_2] + 
  \Gamma[f_2,f_3] \eta[f_1]+
  \eta[f_1] \eta[z_2] \eta[f_3],\\
 S_4[f_1,f_2,f_3,f_4]=&
 \Gamma[f_1,f_2]\Gamma[f_3,f_4]+\Gamma[f_1,f_3]\Gamma[f_2,f_4]+\Gamma[f_1,f_4]\Gamma[f_2,f_3]
 +\\+&
 \Gamma[f_1,f_2]\eta[f_3]\eta[f_4]+\Gamma[f_1,f_3]\eta[f_2]\eta[f_4]+\Gamma[f_1,f_4]\eta[f_2]\eta[f_3]
 +\\+&
 \eta[f_1]\eta[f_2]\eta[f_3]\eta[f_4].
\end{split}\end{equation*}
Such correlation functionals 
are called the \emph{Schwinger functionals}. Their kernels 
\begin{equation*}
	S_n(z_1,z_2,\dotso,s_n)
\end{equation*}
are known as Schwinger functions or $n$-point
functions.
For regular functionals $\Gamma$ and $\eta$, the Schwinger functions are also
regular but it is still reasonable to understand $S_n$ as a functional because
the derivatives are not regular. For example,
\begin{equation*}
	{\Delta_{\lambda}}_1 S_2^{\psi}(z,w) = 2\pi \delta_{\lambda}(z-w).
\end{equation*}

The transformation rules for $S_n$ (the behaviour under the action of $G_*$) 
are quite complex. We present here only the infinitesimal ones
\begin{equation*}\begin{split}
 &\Lc_v S_n^{\psi}[f_1,f_2,\dotso] =
 -\sum\limits_{1\leq k\leq n} S_n^{\psi}[f_1,f_2,\dotso \Lc_v f_k,\dotso, f_n] 
 -\\-&
 \sum\limits_{1\leq k\leq n}
 S_{n-1}^{\psi}[f_1,f_2,\dotso f_{k-1},f_{k+1},\dotso f_{n-1}] 
 \int\limits_{\psi(\Dc)} \left( \mu v^{\psi}{}'(z) + \mu^* \overline{ v^{\psi}{}'(z)} \right) 
 f_k^{\psi}(z) l(dz) 
\end{split}\end{equation*}

We prefer to work with the characteristic functional $\hat\phi$, rather
than with $S_n$. For instance, for any inverse endomorphism 
$G\colon \Dc\setminus\K \map \Dc$, 
we can define the pushforward 
$G^{-1}_*\colon \hat\phi (\Gamma, \eta) \mapsto \hat\phi (F_*\Gamma,F_*\eta)$ 
that maps the functionals 
on $\Dc$ to functionals on $\Dc\setminus\K$. Equivalently,
\begin{equation}
	\left( G^{-1}_* \hat\phi (\Gamma, \eta) \right) [\tilde f] :=
	\hat\phi(G^{-1}_*\Gamma, G^{-1}_* \eta)[\tilde f],
	\quad \tilde f\in \Hc_s[\tilde \Dc]
 \label{Formula: F phi(Gamma, eta) = phi(F^-1 Gamma, F^-1 eta)}
\end{equation}
(we need to mark the dependence on the functionals $\Gamma$ and on $\eta$ here).

The Lie derivative $\Lc_v$ over an arbitrary nonlinear functional 
$\rho\colon \Hc_s\map \mathbb{C}$ 
can be also  defined as
\begin{equation*}
 \Lc_v \rho[f]:=(\Lc_v \rho)[f]=
 \frac{\de}{\de t}\left. (H_{\alpha}[v]_*^{-1} \rho)[f] \right|_{t=0}
\end{equation*} 
(if the partial derivative w.r.t. $\alpha$ 
is well-defined).

For example,
\begin{equation*}
 \Lc_v \exp{\left( \rho[f] \right)} = 
 (\Lc_v \rho[f]) \exp{\left( \rho[f] \right)},
\end{equation*}
\begin{equation*}
 \Lc_v^2 \exp{\left(\rho[f] \right)} = 
 \left(\Lc_v^2 \rho[f] + (\Lc_v \rho[f])^2 \right) \exp{\left( \rho[f] \right)}.
\end{equation*}
In our case $\rho[f] = \hat \phi[f]= \exp(\frac12 \Gamma[f,f] + \eta[f])$.
We remind that the Lie derivative of $\eta$ and $\Gamma$ are defined in 
(\ref{Formula: L eta = v d eta + chi dv})
and
(\ref{Formula: L Gamma = ...})
respectively. 

The operations $G_*^{-1}$ and $\frac{\delta}{\delta f}$ or 
$\Lc$ and $\frac{\delta}{\delta f}$ 
commute. Thus, for example, we have
\begin{equation*}
 \Lc_v S_n[f_1,f_2,\dotso,f_n] = 
 \left( \frac{\delta }{\delta f_1} \frac{\delta }{\delta f_2} \dotso \frac{\delta }{\delta f_n}
  \Lc_v \hat \phi \right)[0].
\end{equation*}
We use this to deduce the martingale properties of $G_t^{-1}{}_*S_n$ and of all
their variational derivatives from the martingale property of
$G_t^{-1}{}_*\hat\phi$, which will be discussed in the next section.

\section{Coupling between SLE and GFF}
\label{Section: Coupling between SLE and GFF}

Let 
$(\Omega^{\Phi},\mathcal{F}^{\Phi},P^{\Phi})$ 
be the probability space
for GFF 
$\Phi$
and let
$(\Omega^{B},\mathcal{F}^{B},P^{B})$ 
be the independent probability space for
the Brownian motion 
$\{B_t\}_{t\in[0,+\infty)}$, 
which governs some
$(\delta,\sigma)$-SLE 
$\{G_t\}_{t\in[0,+\infty)}$. 
In this section, we consider a coupling between
these random laws. 

The pushforward 
$G_t^{-1}{}_*\Phi[f]$
of the GFF
$\Phi[f]$
is well-defined if
$\supp f \in {\rm image}[G^{-1}_t]$. 
In order to handle this,  we introduce a stopping time 
$T[f]$, for which the hull 
$\K_t$ of 
($\delta,\sigma$)-SLE touches 
some small neighborhood $U(\supp f)$ of the support of $f$ for the first time:
\index{$T[f]$}
\begin{equation}
	T[f]:=\sup \{t>0\colon \K_t\cap U(\supp f) 
	= \emptyset\},\quad f\in\Hc.
	\label{Formula: T[f] = ...}
\end{equation}
The neighborhood $U(\supp f)$ can be defined, for example, as the set of points  from $\supp f$
with the Poincare distance less than some $\varepsilon>0$. Thus,
$T[f]>0$ a.s.
We consider a stopped process 
$\{G_{t\wedge T[f]}\}_{t\in[0,+\infty)}$. This approach was also used in 
\cite{Izyurov2010}.
The most important  property of the process
$\{{G_{t\wedge T[f]}^{-1}}_*\Phi[f]\}_{t\in[0,+\infty)}$ 
is that it is a local martingale.
A stopped local martingale is also a local martingale. That is why a
stopping of 
$\{G_t\}_{t\in[0,+\infty)}$
does not change our results. However, we lose some information,
which makes the proposition of coupling less substantial than one possibly
expects.

We present here two definitions of the coupling.
The first one is similar to \cite{Sheffield2010,Izyurov2010}.
The second one is a weaker statement that we shall use in this paper.

\begin{definition}
A GFF 
$\Phi(\Hc,\Gamma,\eta)$
is called \emph{coupled}
to the forward or reverse
$(\delta,\sigma)$-SLE, driven by $\{B_t\}_{t\in[0,+\infty)}$, if
the random variable ${G^{-1}_{t\wedge T[f]}}_*\Phi^{\psi}[f]$
obtained by independent sampling of $\Phi$ and $G_t$ has the same law as
$\Phi^{\psi}[f]$ for any test function $f\in\Hc$, chart map $\psi$, and
$t \in[0,+\infty)$.
\end{definition}

If the  coupling holds for a fixed chart map $\psi$ and for any $f\in\Hc$,
then it also holds for any chart map $\tilde \psi$,
due to
(\ref{Formula: Phi^tilde psi[f] = Phi^psi [f] - mu in log tau f ldz}).
We also give a weaker version of the coupling statement that we plan to use here. 
To this end, we have
to consider a stopped versions of the stochastic process 
$\{G_{t\wedge T[f]}\}_{t\in[0,+\infty)}$.

A collection of stopping
times $\{T_n\}_{n=1,2,\dotso}$ is called a
\emph{fundamental sequence}
if $0\leq T_n\leq T_{n+1}\leq \infty$, $n=1,2,\dotso$ a.s., and
$\lim\limits_{n\map \infty} T_n=\infty$ a.s.

A stochastic process $\{x_t\}_{t\in[0,+\infty)}$ is called a 
\emph{local martingale}
if there exists a fundamental sequence of stopping times
$\{T_n\}_{n=1,2,\dotso}$, such that the stopped process
$\{x_{t\wedge T_n}\}_{t\in[0,+\infty)}$ 
is a martingale for each
$n=1,2,\dotso$.

Let now the statement of coupling above be valid only for the  process
$\{G_{t\wedge {T[f]} \wedge T_n}\}_{t\in[0,+\infty)}$ stopped by $T_n$
for each $n=1,2,\dotso$.
Namely,
${G^{-1}_{t\wedge {T[f]} \wedge T_n}}_*\Phi^{\psi}[f]$
has the same law as
$\Phi^{\psi}[f]$ for each $n=1,2,\dotso$.

We are ready now to define the local coupling.

\begin{definition}
A GFF 
$\Phi(\Hc,\Gamma,\eta)$
is called \emph{locally coupled}
\index{local coupling}
to 
$(\delta,\sigma)$-SLE, driven by $\{B_t\}_{t\in[0,+\infty)}$, if
there exists a fundamental sequence 
$\{T_n[f,\psi]\}_{n=1,2,\dotso}$, such that
the random variable ${G^{-1}_{t\wedge T[f]}}_*\Phi^{\psi}[f]$
obtained by independent sampling of $\Phi$ and $G_t$ has the same law as
$\Phi^{\psi}[f]$ until the
stopping time $T_n[f,\psi]$ for each $n=1,2,\dotso$, for any test function
$f\in\Hc$, and a chart map $\psi$.
\end{definition}

\begin{remark}
If 
$\mathbb{T}[f,\psi]=+\infty$ a.s. 
for each
$f\in\Hc$,
then the coupling is not local.
\end{remark}

The following theorem generalizes the result of 
\cite{Sheffield2010}.

\begin{theorem}
\label{Theorem: The coupling theorem}
The following three statements are equivalent:
\begin{enumerate} [1.]
\item
GFF
$\Phi(\Hc,\Gamma,\eta)$
is locally coupled to
$(\delta,\sigma)$-SLE;
\item
${G_{t \wedge T[f]}^{-1}}_* \hat\phi^{\psi}[f]$ is a local martingale for
$f\in\Hc$
in any chart $\psi$;
\item
The system of the equations
 \begin{equation}\begin{split}
  \Lc_{\delta} \eta[f] + \frac12\Lc_{\sigma}^2 \eta[f] = 0
  ,\quad f\in\Hc,
  \label{Formula: L eta + 1/2 L^2 eta}
 \end{split}\end{equation}
 \begin{equation}
  \Lc_{\delta} \Gamma[f,g] + \Lc_{\sigma} \eta[f] \Lc_{\sigma} \eta[g] = 0
  ,\quad f,g\in\Hc,
  \label{Formula: Hadamard's formula}
 \end{equation}
 and
 \begin{equation}
  \Lc_{\sigma} \Gamma[f,g] = 0
  ,\quad f,g\in\Hc.
  \label{Formula: L_sigma Gamma = 0}
 \end{equation}
is satisfied.
 \end{enumerate}
\end{theorem}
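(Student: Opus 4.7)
My plan is to establish the equivalences in the order $(1)\Leftrightarrow(2)\Leftrightarrow(3)$, treating the characteristic functional $\hat\phi$ as the central object, since it packages the distributional information of $\Phi$ into a single analytic object on $\Hc$.

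For $(1)\Leftrightarrow(2)$, the key observation is that for a fixed realization of $G_t$, the random variable $G_{t}^{-1}{}_*\Phi^{\psi}[f]$ is (up to a deterministic pre-pre-Schwarzian shift coming from the $\mu,\mu^*$ terms) a Gaussian functional of $\Phi$. I would compute the conditional Laplace transform and verify, directly from the definitions of the pushforwards of $\Gamma$ and $\eta$, the identity
\begin{equation*}
  \Evv{\Phi}{e^{G_{t}^{-1}{}_*\Phi^{\psi}[f]}}
  = G_{t}^{-1}{}_*\hat\phi^{\psi}[f].
\end{equation*}
Taking the Brownian expectation then yields $\Ev{e^{G_{t\wedge T[f]\wedge T_n}^{-1}{}_*\Phi^{\psi}[f]}}=\Evv{B}{G_{t\wedge T[f]\wedge T_n}^{-1}{}_*\hat\phi^{\psi}[f]}$, so that agreement in law with $\Phi^{\psi}[f]$ for every $n$ along a fundamental sequence is equivalent to $\{G_{t\wedge T[f]}^{-1}{}_*\hat\phi^{\psi}[f]\}_{t\geq 0}$ being a local martingale. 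Chart invariance of (1) and (2) follows from the pre-pre-Schwarzian transformation rule \eqref{Formula: Phi^tilde psi[f] = Phi^psi [f] - mu in log tau f ldz}.

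For $(2)\Leftrightarrow(3)$, I would apply the It\^o formula to the Stratonovich flow \eqref{Formula: Slit hol stoch flow Strat}. Because $\Lc_v$ is the infinitesimal generator of the pushforward $H_s[v]^{-1}_*$, one gets
\begin{equation*}
  \dS G_{t}^{-1}{}_*\hat\phi^{\psi}[f]
  = \Lc_{\delta}\bigl(G_{t}^{-1}{}_*\hat\phi^{\psi}[f]\bigr)\,dt
    + \Lc_{\sigma}\bigl(G_{t}^{-1}{}_*\hat\phi^{\psi}[f]\bigr)\,\dS B_t,
\end{equation*}
which in It\^o form picks up the It\^o correction and gives drift $\bigl(\Lc_{\delta}+\tfrac12\Lc_{\sigma}^2\bigr)G_{t}^{-1}{}_*\hat\phi^{\psi}[f]$. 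By the Markov/homogeneity property of the $(\delta,\sigma)$-SLE flow, the local martingale condition is equivalent to $\bigl(\Lc_{\delta}+\tfrac12\Lc_{\sigma}^2\bigr)\hat\phi^{\psi}[f]=0$ for every $f\in\Hc$.

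To extract the three equations, I would substitute $\alpha f$ for $f$ and use $\log\hat\phi[\alpha f]=\tfrac{\alpha^2}{2}\Gamma[f,f]+\alpha\eta[f]$. Expanding $\bigl(\Lc_{\delta}+\tfrac12\Lc_{\sigma}^2\bigr)\hat\phi[\alpha f]+\tfrac12(\Lc_{\sigma}\log\hat\phi[\alpha f])^2\cdot\hat\phi[\alpha f]=0$ as a polynomial in $\alpha$ (of degree four, after dividing by $\hat\phi[\alpha f]$), the coefficient of $\alpha^{4}$ gives $(\Lc_{\sigma}\Gamma[f,f])^{2}=0$, hence \eqref{Formula: L_sigma Gamma = 0} by polarization; the coefficient of $\alpha$ gives \eqref{Formula: L eta + 1/2 L^2 eta}; using $\Lc_{\sigma}\Gamma=0$ (so also $\Lc_{\sigma}^{2}\Gamma=0$) the $\alpha^{3}$ coefficient vanishes automatically, and the $\alpha^{2}$ coefficient yields \eqref{Formula: Hadamard's formula} with $f=g$, polarized in the usual way.

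The main obstacle I anticipate is the rigorous justification of the infinite-dimensional It\^o calculation: one must verify that $\Lc_{\sigma}$ and $\Lc_{\delta}$ may be commuted with the test-function evaluation and with variational differentiation on the nuclear space $\Hc$, that the stopping time $T[f]$ keeps $G_{t}^{-1}{}_*$ well-defined on $\supp f$ (so that the pushforwards of $\Gamma$ and $\eta$ never develop singularities inside the support), and that the polynomial separation in $\alpha$ is legitimate uniformly in $f$ so that polarization of the scalar identities over $\Hc\times\Hc$ is valid.
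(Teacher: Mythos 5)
Your reduction of statement 1 to the identity $\Evv{B}{{G^{-1}_{t\wedge T[f]\wedge T_n}}_*\hat\phi^{\psi}[f]}=\hat\phi^{\psi}[f]$, and your treatment of $2\Leftrightarrow 3$ via the It\^o drift of ${G_t^{-1}}_*\hat\phi[f]$ followed by the $\alpha$-scaling and polarization extraction of the three equations, match the paper's argument in substance (the paper phrases the extraction as the vanishing of each homogeneous component of a degree-four functional polynomial, which is the same thing). One notational slip: if $\Lc_{\sigma}^2$ is understood to act on the nonlinear functional $\hat\phi$ itself, then $\Lc_{\sigma}^2\hat\phi[f]=\bigl(\Lc_{\sigma}^2 W[f]+(\Lc_{\sigma}W[f])^2\bigr)\hat\phi[f]$ already contains the quadratic term, so your displayed condition double-counts it; the correct drift-zero condition is $\Ac W[f]+\tfrac12(\Lc_{\sigma}W[f])^2=0$ with $\Ac=\Lc_{\delta}+\tfrac12\Lc_{\sigma}^2$ acting on $W=\log\hat\phi$. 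Your coefficient extraction is consistent with the correct condition, so the three equations come out right.

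The genuine gap is in $1\Rightarrow 2$. Agreement in law of ${G^{-1}_{t\wedge T[f]\wedge T_n}}_*\Phi^{\psi}[f]$ with $\Phi^{\psi}[f]$ only yields that the expectation $\Evv{B}{{G^{-1}_{t\wedge T[f]\wedge T_n}}_*\hat\phi^{\psi}[f]}$ is constant in $t$; constancy of the mean of a process is strictly weaker than the (local) martingale property, so the equivalence you assert in one line is not established. The paper closes this gap by exploiting that the constant-mean identity holds for \emph{all} test functions simultaneously: it substitutes $f\mapsto\tilde G_{s\wedge T[f]}{}_*f$ for an independently sampled copy $\tilde G_s$ of the flow, multiplies by the exponential of the pre-pre-Schwarzian Jacobian integral so as to rewrite everything in terms of ${\bigl(G_{t}\circ\tilde G_{s}\bigr)}^{-1}_*\hat\phi^{\psi}[f]$, and then uses the composition and Markov property $G_{t+s}=G_t\circ\tilde G_s$ (with $G_t$ independent of $\mathcal{F}^B_s$) to identify the left-hand side as a conditional expectation given $\mathcal{F}^B_{s\wedge T[f]}$, producing along the way a shifted fundamental sequence $T_n'[f,\psi]=T_n[\tilde G^{-1}_{s\wedge T[f]}{}_*f,\psi]+s$. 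Without this step, or an equivalent one, the implication from local coupling to the local martingale property does not follow; you should add it (the converse direction then runs the same computation backwards).
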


We start the proof after some remarks.
Just for clarity (but not for applications) we reformulate the system
(\ref{Formula: L eta + 1/2 L^2 eta}--\ref{Formula: L_sigma Gamma = 0})
directly in terms of partial derivatives using
(\ref{Formula: L eta = v d eta + chi dv}),
(\ref{Formula: L L phi = w de L phi}),
(\ref{Formula: L Gamma = ...}),
and
(\ref{Formula: A = L + 1/2 L^2})
as
\begin{equation*}\begin{split}
 &\delta(z) \de_z \eta(z) + \overline{\delta(z)} \de_{\bar z} \eta(z)
 + \mu \delta'(z) + \mu^* \overline{\delta'(z)}
 +\\+&
 \frac12 \sigma^2(z) \de_z^2 \eta(z) 
 + \frac12 \overline{\sigma^2(z)} \de_{\bar z}^2 \eta(z)
 + \sigma(z) \overline{\sigma(z)} \de_z \de_{\bar z} \eta
 +\\+&
 \frac12 \sigma(z) \sigma'(z) \de_z \eta(z)
 + \frac12 \overline{\sigma(z)} \overline{\sigma'(z)} \de_{\bar z} \eta(z)
 + \mu \sigma(z) \sigma''(z) + \mu^* \overline{ \sigma(z) \sigma''(z) } = 0
 ;\\
 &\delta(z) \de_z \Gamma(z,w) + \delta(w) \de_w \Gamma(z,w)
 +\overline{\delta(z)} \de_{\bar z} \Gamma(z,w) 
 + \overline{\delta(w)} \de_{\bar w} \Gamma(z,w)
 +\\+&
 \left(
  \sigma(z) \de_z \eta(z) + \overline{\sigma(z)} \de_{\bar z} \eta(z)
  + \mu \sigma'(z) + \mu^* \overline{ \sigma'(z) }
 \right)
 \times \\ \times &
 \left(
  \sigma(w) \de_w \eta(w) + \overline{\sigma(w)} \de_{\bar w} \eta(w)
  + \mu \sigma'(w) + \mu^* \overline{ \sigma'(w) }
 \right) = 0;\\
 &\sigma(z) \de_z \Gamma(z,w) + \sigma(w) \de_w \Gamma(z,w)
 +\overline{\sigma(z)} \de_{\bar z} \Gamma(z,w) 
 + \overline{\sigma(w)} \de_{\bar w} \Gamma(z,w) 
 = 0,
\end{split}\end{equation*}
where we drop  the upper index $\psi$ for shortness.

The first equation
(\ref{Formula: L eta + 1/2 L^2 eta})
is just a local martingale condition for $\eta$. The second one 
\eqref{Formula: Hadamard's formula} 
is a special case of Hadamard's variation formula, where the variation is concentrated at one point at the boundary. 
The third equationmeans that $\Gamma$ should be invariant under the one-parametric family of  M\"obius automorphisms generated by $\sigma$.

\medskip
\noindent
{\it Proof of  Theorem \ref{Theorem: The coupling theorem}}.
Let us start with showing how the statement 1 about the coupling implies the
statement 2 about the local martingality.

\medskip
\noindent
\textbf{1.$\Leftrightarrow$2.}
Let 
$G_{t\wedge T_f \wedge T_n[f,\psi]}$ 
be a stopped process $G_{t\wedge T_f}$ by the stopping times
$T_n[f,\psi]$ 
forming some fundamental sequence. 
The coupling statement can be reformulated as an equality of characteristic
functions for the random variables 
${G^{-1}_{t\wedge \tilde T_n[f,\psi]}}_*\Phi^{\psi}[f]$
and
$\Phi^{\psi}[f]$
for all test functions $f$.
Namely, the following expectations must be equal
\begin{equation*}
 \Evv{B}{ \Evv{\Phi} 
 {e^{ {G^{-1}_{t\wedge T[f]\wedge T_n[f,\psi]}}_*\Phi[f] } } }
 = \Evv{\Phi}{e^{\Phi[f]}}
 ,\quad f\in \Hc,\quad t\in[0,+\infty),\quad n=1,2,\dotso~,
\end{equation*}
which in particular, means the integrability of
$e^{{G^{-1}_{t\wedge \tilde T_n[f,\psi]}}{}_*\Phi[f]}$ 
with respect to 
$\Omega^{B}$ 
and 
$\Omega^{\Phi}$.
We used 
$\Evv{B}{\cdot}$ 
for the expectation with respect to the random law of
$\{B_t\}_{t\in[0,+\infty)}$ 
(or $\{G_t\}_{t\in[0,+\infty)}$) 
and
$\Evv{\Phi}{\cdot}$ 
for the expectation with respect to $\Phi$.
Let us use 
(\ref{Formula: GFF chracteristic function})
and
(\ref{Formula: F phi(Gamma, eta) = phi(F^-1 Gamma, F^-1 eta)})
to simplify this identity to
\begin{equation*}
	\Evv{B}{{G^{-1}_{t\wedge T[f]\wedge T_n[f,\psi]}}_* \hat\phi^{\psi}[f]} =
	\hat\phi^{\psi}[f],\quad f\in \Hc
	,\quad t\in[0,+\infty),\quad n=1,2,\dotso~.
\end{equation*}
After substituting 
$f\map \tilde G_{s \wedge T[f]}{}_*f$ 
for some independently
sampled 
$\tilde G_s$ 
and
$s\in[0,+\infty)$, 
we obtain
\begin{equation*}\begin{split}
	&\Evv{B}{G^{-1}_{t\wedge T[\tilde G_{s \wedge T[f]}{}_* f]
		\wedge T_n[\tilde
	G_{s \wedge T[f]}{}_*f,\psi]}{}_* 
	\hat\phi^{\psi}[\tilde G_{s \wedge T[f]}{}_*f]} 
	= \hat\phi^{\psi}[\tilde G_{s \wedge T[f]}{}_*f],\\
	&f\in \Hc
	,\quad t\in[0,+\infty),\quad n=1,2,\dotso.
\end{split}\end{equation*}
Multiplying both sides by 
\begin{equation*}
\supp
	e^{\int\limits_{~\supp f^{\psi}} 
	\left( 
		\mu \log \left(\tilde G_{s \wedge T[f]}^{\psi}\right){'}(z) +
		\mu^{*}	\overline{\log \left(\tilde G_{s \wedge T[f]}^{\psi}\right){'}(z)}
	\right)
	f(z) l(dz) }
\end{equation*}
and by making use of 
\eqref{Formula: F eta f= eta F-1 f}
and
\eqref{Formula: GFF chracteristic function},
we conclude that
\begin{equation}\begin{split}
	&\Evv{B}{
	\tilde G_{s \wedge T[f]}^{-1}{}_*
	G^{-1}_{t\wedge T[\tilde G_{s \wedge T[f]}{}_* f]
		\wedge T_n[\tilde
	G_{s \wedge T[f]}^{-1}{}_*f,\psi]}{}_* 
	\hat\phi^{\psi}[f]} 
	= \tilde G_{s \wedge T[f]}{}_* \hat\phi^{\psi}[f],\\
	&f\in \Hc
	,\quad t\in[0,+\infty),\quad n=1,2,\dotso.
	\label{Formula: 5}
\end{split}\end{equation}
Defined now the process
\begin{equation*}
	\tilde {\tilde G}_{t+s} 
	:= G_{t} \circ \tilde G_{s}
	,\quad s,t\in[0,+\infty),
\end{equation*}
which has the law of ($\delta,\sigma$)-SLE.
Its stopped version possesses the identity
\begin{equation*}
	G_{t+s\wedge T[f]} 
	= G_{t\wedge T[\tilde G_{s \wedge T[f]}{}_* f]} \circ \tilde G_{s\wedge T[f]}
	,\quad s,t\in[0,+\infty),\quad f\in\Hc.
\end{equation*}
The left-hand side of
\eqref{Formula: 5}
is equal to
\begin{equation*}\begin{split}
	&\Evv{B}{
	\left(
		{G_{t\wedge T[\tilde G_{s \wedge T[f]}{}_* f]
		\wedge T_n[\tilde
		G_{s \wedge T[f]}^{-1}{}_*f,\psi]}} 
		\circ \tilde G_{s \wedge T[f]}
	\right)_*^{-1} 
	\hat\phi^{\psi}[f]}
	=\\=&
	\Evv{B}{
	\left(
		\tilde {\tilde G}_{t+s\wedge T[f]
		\wedge T_n[\tilde	G_{s \wedge T[f]}^{-1}{}_*f,\psi]+s } 
	\right)_*^{-1} 
	\hat\phi^{\psi}[f]~|~
	\mathcal{F}^B_{s\wedge T[f]}}.
\end{split}\end{equation*}
We use now the Markov property of ($\delta,\sigma$)-SLE and conclude that
$T_n'[f,\psi]:= T_n[\tilde	G_{s \wedge T[f]}^{-1}{}_*f,\psi]+s$
is a fundamental sequence for the pair of $f$ and 
$\psi$. 
Thus, 
\eqref{Formula: 5}
simplifies to
\begin{equation*}
	\Evv{B}{
		G_{t+s\wedge T[f]
		\wedge T_n'[f,\psi] }^{-1} {}_* \hat\phi^{\psi}[f]~|~
	\mathcal{F}^B_{s\wedge T[f]\wedge T_n'[f,\psi]}}
	=G_{t+s\wedge T[f]\wedge T_n'[f,\psi] }^{-1}{}_* \hat \phi[f],
\end{equation*} 
hence, 
$\{G_{t\wedge T[f] }^{-1}{}_* \hat \phi[f]\}_{t\in[0,+\infty)}$ 
is a local martingale.

The inverse statement can be obtained by the same method in the reverse
order.

\medskip
\noindent
\textbf{2.$\Leftrightarrow$3.}
According to Proposition~\ref{Proposition: G chi[f] is an It\^{o} process}, Appendix~A,
the drift term, i.e., the coefficient at $dt$, vanishes identically when
\begin{equation}
 \Ac W[f] +  \frac12 \left( \Lc_{\sigma} W[f] \right)^2 = 0,\quad f\in\Hc.
 \label{Formula: A W + 1/2 L W L W = 0}
\end{equation}
The left-hand side is a functional polynomial of degree 4.
We use the fact that a regular symmetric functional 
$P[f] := \sum\limits_{k=1,2,\dotso n}
p_k[f,f,\dotso,f]$
of degree $n$ over such spaces as $\Hc_s$,
$\Hc_s^*$, $\Hc_{s,b}$, 
or
$\Hc^{\pm}_{s,b}{}^{*}$
is identically zero if and only if 
\begin{equation*}
	p_k[f_1,f_2,\dotso,f_n] = 0,\quad k=1,2,\dotso n, \quad f\in \Hc.
\end{equation*}
Thus, each of the following functions must be identically zero:
\begin{equation}\begin{split}
	&\Ac \eta[f] = 0,\quad
	\frac12 \Ac \Gamma[f,g]
	+ \frac12 \Lc_{\sigma} \eta[f] \Lc_{\sigma} \eta[g] = 0,\\
	& \Lc_{\sigma} \eta[f] \Lc_{\sigma} \Gamma[g,h] + \text{symmetric terms} = 0,\\
	&\Lc_{\sigma} \Gamma[f,g] \Lc_{\sigma} \Gamma[h,l]  + \text{symmetric terms} =0,\\
	& f,g,h,l\in \Hc.
	\label{Formula: 6}
\end{split}\end{equation}
We can conclude that $\Lc_{\sigma} \Gamma[f,g]= 0$,
$\Ac \Gamma[f,g] = \Lc_{\delta}\Gamma[f,g]$
for any $f,g\in\Hc$,
and this system is equivalent to the system
(\ref{Formula: L eta + 1/2 L^2 eta}--\ref{Formula: L_sigma Gamma = 0}).
For the case $\Hc=\Hc_s$ we can write \eqref{Formula: 6} in terms of functions
on $\psi(\Dc)$:
\begin{equation*}\begin{split}
	&\Ac \eta(z) = 0,\quad
	\frac12 \Ac \Gamma(z,w)
	+ \frac12 \Lc_{\sigma} \eta(z) \Lc_{\sigma} \eta(w) = 0,\\
	& \Lc_{\sigma} \eta(z) \Lc_{\sigma} \Gamma(w,u) 
	+ \text{symmetric terms} = 0,\\
	& \Lc_{\sigma} \Gamma(z,w) \Lc_{\sigma} \Gamma(u,v) 
	+ \text{symmetric terms} =0,\\
	& z,w,u,v\in \psi(\Dc)
	,\quad z \neq w,~u \neq v, \dotso~.
\end{split}\end{equation*}

\begin{remark}
Fix a chart $\psi$.
The coupling and the martingales are not local if in addition to the
proposition 3 in Theorem
\ref{Theorem: The coupling theorem}
the relation
\begin{equation*}
 \Evv{B}{ \left| \int\limits_0^t \exp
 	\left( {G_{\tau \wedge T[f]}^{-1}}_* W^{\psi}[f] \right) 
 	{G_{\tau \wedge T[f]}^{-1}}_* \Lc_{\sigma}
 	W^{\psi}[f] \dI B_{\tau} \right| } <\infty,\quad
 t\geq 0,
\end{equation*}
holds. This is the condition that the diffusion term at
$\dI B_t$ in
(\ref{Formula: G chi = chi + int ...})
is in $L_1(\Omega^{B})$.
However, this may not be true, in general, in another chart $\tilde \psi$.
Meanwhile, if the local martingale property of
${G_{t \wedge T[f]}^{-1}}_* \hat\phi^{\psi}[f]$
is satisfied in one chart $\psi$ for any $f\in\Hc$, then it is also true in
any  chart due to the invariance of the condition 
\eqref{Formula: A W + 1/2 L W L W = 0} in the proof.
\end{remark}

The study of the general solution to 
(\ref{Formula: L eta + 1/2 L^2 eta}-\ref{Formula: L_sigma Gamma = 0})
is an interesting and complicated problem. Take the Lie derivative
$\Lc_{\sigma}$ 
in the second equation, the Lie derivative 
$\Lc_{\delta}$
in the third equation, and consider the difference of the resulting equations.
It is an algebraically independent equation
\begin{equation*}
	\Lc_{[\delta,\sigma]} \Gamma[f,g] = 
	- \Lc_{\sigma}^2 \eta[f] \Lc_{\sigma} \eta[g]
	\Lc_{\sigma}^2 \eta[f] \Lc_{\sigma}^2 \eta[g].
\end{equation*}
Continuing by induction we obtain an infinite system of a priori algebraically
independent equations because the Lie algebra induced by the vector fields
$\delta$ and $\sigma$ is infinite-dimensional. Thereby, the existence of the solution to the system
(\ref{Formula: L eta + 1/2 L^2 eta}-\ref{Formula: L_sigma Gamma = 0})
is a special event that is strongly related to the properties of this algebra. 

Before studying special solutions to the system
(\ref{Formula: L eta + 1/2 L^2 eta}--\ref{Formula: L_sigma Gamma = 0}),
let us consider some of its general properties. We also reformulate it in terms
of the analytic functions $\eta^+$, $\Gamma^{++}$ and $\Gamma^{+-}$, which is
technically more convenient.

\begin{lemma}
\label{Lemma: eta structure}
Let $\delta$, $\sigma$, $\eta$, and $\Gamma$ be such that the system
(\ref{Formula: L eta + 1/2 L^2 eta}--\ref{Formula: L_sigma Gamma = 0})
is satisfied, let
$\Gamma$ be a fundamental solution to the Laplace equation
(see (\ref{Formula: Gamma = Log + H})),
which transforms as a scalar,
see (\ref{Formula: G B(z,w) = B(G(z),G(w))}), 
and let $\eta$ be a pre-pre-Schwrazian. Then,
\begin{enumerate}[1.]
\item
$\eta$ is a $(i\chi/2,-i\chi/2)$-pre-pre-Schwarzian
\eqref{Formula: tilde eta = eta - chi arg}
given by a harmonic function in any chart with $\chi$ given by
\eqref{Formula: chi = 2/k - k/2}.
\item
The boundary value of $\eta$ undergoes a jump $2\pi/\sqrt{\kappa}$ at the
source point $a$, namely,  its local behaviour in the half-plane chart is given
by 
\eqref{Formula: eta = -2/k (arg z - pi/2) + hol} 
up to a sign;
\item
The system
\eqref{Formula: L eta + 1/2 L^2 eta}--\ref{Formula: L_sigma Gamma = 0}
is equivalent to the system
\eqref{Formula: eta = eta^+ + eta^-},
\eqref{Formula: delta/sigma j + mu [sigma,delta]/sigma + 1/2 L j = ibeta},
\eqref{Formula: Gamma = Gamma^++ + bar Gamma^++ + Gamma^+- + bar Gamma^+-},
\eqref{Formula: L Gamma^++ + L sigma^+ L sigma^+ = e + e},
and
\eqref{Formula: L Gamma^++ = 0, L Gamma^+- = 0}.
\end{enumerate}
\end{lemma}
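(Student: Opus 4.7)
The plan is to exploit the chiral (holomorphic/antiholomorphic) structure forced by the singular form of $\Gamma$ together with the pole of $\delta$, and then to pin down the pre-pre-Schwarzian type $(\mu, \mu^*)$ and the constant $\chi$ by a local expansion near the source point $a$.

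I would first introduce the chiral decomposition
\[
\Gamma(z,w) = 2\,\mathrm{Re}\,\Gamma^{++}(z,w) + 2\,\mathrm{Re}\,\Gamma^{+-}(z,\bar w),
\]
with $\Gamma^{++}$ holomorphic in both arguments and absorbing the singularity $-\tfrac12 \log(z-w)$, and $\Gamma^{+-}$ holomorphic in $z$ and in $\bar w$ and everywhere regular; reality, symmetry, and the prescribed form of the singular part make this decomposition unique up to constants. Plugging it into $\Lc_\sigma \Gamma = 0$ and using that $\Lc_\sigma$ preserves each of the $(z,\bar z,w,\bar w)$-chiralities separately (because $\sigma$ is holomorphic), I obtain the split equations $\Lc_\sigma \Gamma^{++} = 0$ and $\Lc_\sigma \Gamma^{+-} = 0$. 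The same chiral splitting applied to the Hadamard equation $\Lc_\delta \Gamma = -\Lc_\sigma \eta(z)\Lc_\sigma \eta(w)$ forces $\Lc_\sigma \eta$ to be of the form $2\,\mathrm{Re}\, j^+(z)$ for a holomorphic $j^+$, since the right-hand side must factorise in a chirally compatible way. Harmonicity of $\eta$ itself then follows from combining the two first-order identities for $\Lc_\sigma \eta$ and $\Lc_\delta \eta$ (the latter harmonic by the first equation of the system) and using that $\sigma$ and $\delta$ are pointwise linearly independent on a dense open subset of $\Dc$.

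To fix the type $(\mu, \mu^*)$, I would observe that a nonzero $\gamma = \mu + \mu^*$ would contribute a $\gamma \log|\tau'|$ term to the chart-to-chart rule for $\eta$; a harmonic representative cannot coexist in every chart with such a contribution while preserving the chiral purity of $\Lc_\sigma \eta$, so $\gamma = 0$ and hence $\mu = i\chi/2$, $\mu^* = -i\chi/2$ with $\chi \in \mathbb{R}$. Determining the explicit value of $\chi$ is the main obstacle. I would work in the half-plane chart with $a = 0$: near $z = w = 0$, the combination of $\delta(z) \sim 2/z$ with the logarithmic part of $\Gamma$ produces a leading $1/(zw)$ contribution to $\Lc_\delta \Gamma$ whose coefficient, matched against $-\Lc_\sigma \eta(z)\Lc_\sigma \eta(w)$ together with $\sigma(0) = -\sqrt{\kappa}$ and the PPS correction $\mu \sigma'$, fixes the residue of $\Lc_\sigma \eta$ at the origin. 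Substituting this leading behaviour into $\Lc_\delta \eta + \tfrac12 \Lc_\sigma^2 \eta = 0$ at leading order yields a quadratic relation in $\chi$ whose real root is exactly $\chi = 2/\sqrt{\kappa} - \sqrt{\kappa}/2$, and the resulting local form $\eta \sim -\tfrac{2}{\sqrt{\kappa}}(\arg z - \pi/2) + (\text{regular})$ gives the claimed boundary jump $2\pi/\sqrt{\kappa}$ at $a$.

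For item~3, the forward direction is essentially contained in the chiral decomposition above: the three original equations split sector by sector into the listed equations for $j^+ = \Lc_\sigma \eta^+$, $\Gamma^{++}$, and $\Gamma^{+-}$. The converse is a routine reassembly, obtained by summing chiral and antichiral components of the reformulated system to recover the three original equations.
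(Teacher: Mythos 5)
Your proposal follows essentially the same route as the paper's proof: the chiral decomposition of $\Gamma$ into $\Gamma^{++}$ and $\Gamma^{+-}$, harmonicity of $\eta$ from the transversality of $\delta$ and $\sigma$, extraction of the residue of $j^+=\Lc_\sigma\eta^+$ at the source from the $1/(zw)$ singularity of $\Lc_\delta\Gamma$, and the Laurent-coefficient matching in $\Lc_\delta\eta^+ +\frac12\Lc_\sigma^2\eta^+=\mathrm{const}$ that fixes $\mu$ and hence $\chi$. One caveat: your a priori argument that $\gamma=\mu+\mu^*=0$ (``a harmonic representative cannot coexist with a $\gamma\log|\tau'|$ term'') does not hold as stated, since $\log|\tau'|$ is itself harmonic and so does not obstruct harmonicity in any chart; however, this step is redundant, because the coefficient matching you perform afterwards determines $\mu$ explicitly as $i(4-\kappa)/(4\sqrt{\kappa})$, which is purely imaginary, and this is exactly how the paper settles the type $(i\chi/2,-i\chi/2)$.
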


\begin{proof}
The system
(\ref{Formula: L eta + 1/2 L^2 eta}--\ref{Formula: L_sigma Gamma = 0})
defines $\eta$ only up to an additive constant $C$ that we keep writing in
the formulas for $\eta$ below. The condition for the pre-pre-Schwarzian $\eta$
to be real leads to only two possibilities:
\begin{enumerate} [1.]
 \item $\mu=-\mu^*$ and is pure imaginary as in
 (\ref{Formula: tilde eta = eta - chi arg});
 \item $\mu=\mu^*$ and is real as in
 (\ref{Formula: tilde eta = eta + gamma log}).
\end{enumerate}

The equation
\eqref{Formula: Hadamard's formula} 
shows that the functional $\Lc_{\sigma} \eta$ has to be given by a harmonic
function as well as $\Lc_{\sigma}^2 \eta$ in any chart. On the other hand,
\eqref{Formula: L eta + 1/2 L^2 eta} 
implies that $\Lc_{\delta}\eta$ is also harmonic. The vector fields $\delta$
and $\sigma$ are transversal almost everywhere. We conclude that $\eta$ is
harmonic. We used also the fact that the additional $\mu$-terms in 
\eqref{Formula: L eta = v d eta + chi dv} 
are harmonic.

The harmonic function
$\eta^{\psi}(z)$ 
can be represented as a sum of an analytic function 
${\eta^+}^{\psi}(z)$ 
and its complex conjugate in any chart $\psi$
\begin{equation}
 \eta^{\psi}(z) = {\eta^+}^{\psi}(z) + \overline{{\eta^+}^{\psi}(z)}.
 \label{Formula: eta = eta^+ + eta^-}
\end{equation}
Below in this proof, we drop the chart index $\psi$, which can
be chosen arbitrarily.

Let us define $\eta^+$ and $\overline {\eta^+}$ to be pre-pre-Schwarzians of
orders $(\mu,0)$ and $(0,\mu^*)$ respectively by
(\ref{Formula: L eta = v d eta + chi dv}). Thus, $\eta^+$ is defined up to a
complex constant $C^+$.
We denote
\begin{equation}
  j^+ := \Lc_{\sigma} \eta^+.
  \label{Formula: j^+ = L eta^+}
\end{equation}
and
\begin{equation}
  j := \Lc_{\sigma} \eta = \Lc_{\sigma} \eta^+ + \overline{\Lc_{\sigma} \eta^+}.
  \label{Formula: j = L eta}
\end{equation}
The reciprocal formula is
 \begin{equation}\begin{split}
  \eta^+(z) 
  := \int { \frac{j^+(z) - \mu \sigma'(z)}{\sigma(z)} }dz.
	\label{Folmula: phi+ = int ...}
\end{split}\end{equation}
This integral can be a ramified function if $\sigma(z)$ has a zero inside of
$\Dc$ (the elliptic case). We consider how to handle this technical difficulty
in Section
\ref{Section: Radial SLE with drift}.

Let us reformulate now
(\ref{Formula: L eta + 1/2 L^2 eta}) in terms of $j^+$.
Using the fact that
\begin{equation*}
 \Lc_{v}^2 (\eta^+ + \overline{\eta^+}) =
 \Lc_{v}^2 \eta^+ + \Lc_{v}^2 \overline{\eta^+},
\end{equation*}
we arrive at 
\begin{equation}
 \Lc_{\delta} \eta^+ + \frac12 {\Lc_{\sigma}}^2 \eta^+ = C^+.
 \label{Formula: Lc eta^+ + 1/2 L^2 eta^+ = C}
\end{equation}
Here $C^+=i\beta$ for some $\beta\in\mathbb{R}$ for the forward case. For the
reverse case, 
$C^+=-\beta + i \beta'$
for some 
$\beta,\beta'\in\mathbb{R}$
because 
\eqref{Formula: Lc eta^+ + 1/2 L^2 eta^+ = C}
is an identity in sense of functionals over $\Hc_s^*$. 

The relation 
\eqref{Formula: Lc eta^+ + 1/2 L^2 eta^+ = C}
is equivalent to
\begin{equation*}\begin{split}
 &\frac{\delta}{\sigma} \Lc_{\sigma} \eta^+ +
 \frac{\sigma \Lc_\delta \eta^+ - \delta \Lc_\sigma \eta^+}{\sigma}
 + \frac12 {\Lc_{\sigma}}^2 \eta^+ = C^+ \quad
  \Leftrightarrow \\
 &\frac{\delta}{\sigma} j^+
 + \frac{\sigma \delta \de \eta^+ + \mu \sigma \delta'
  - \delta \sigma \de \eta^+ - \mu \delta \sigma'}{\sigma}
 + \frac12 \Lc_\sigma j^+ = C^+ \quad \Leftrightarrow
\end{split}\end{equation*}
\begin{equation}\begin{split}
 &\frac{\delta}{\sigma} j^+
 + \mu \frac{[\sigma, \delta]}{\sigma}
 + \frac12 \Lc_\sigma j^+ = C^+
 \label{Formula: delta/sigma j + mu [sigma,delta]/sigma + 1/2 L j = ibeta}
\end{split}\end{equation}
by
\eqref{Formula: L eta = v d eta + chi dv}
and
\eqref{Formula: L_v w = [v,w] := v w' - v' w}.

Consider now the function $\Gamma^{\HH}(z,w)$. It is harmonic with respect to
both variables with the only logarithmic singularity. Hence, it can be split as a sum of four terms
\begin{equation}
 \Gamma^{\HH}(z,w) :=
 {\Gamma^{++}}^{\HH}(z,w) + \overline{{\Gamma^{++}}^{\HH}(z,w)}
 - {\Gamma^{+-}}^{\HH}(z,\bar w) - \overline{{\Gamma^{+-}}^{\HH}(z,\bar w)},
 \label{Formula: Gamma = Gamma^++ + bar Gamma^++ + Gamma^+- + bar Gamma^+-}
\end{equation}
where
${\Gamma^{++}}^{\HH}(z,w)$
and
${\Gamma^{+-}}^{\HH}(z,w)$
are analytic with respect to both variables except the diagonal $z=w$ for
${\Gamma^{++}}^{\HH}(z,w)$.

So, e.g.,
$\overline{{\Gamma^{+-}}^{\HH}(z,\bar w)}$
is anti-analytic with respect to $z$ and analytic with respect to $w$.
We can assume that both
${\Gamma^{++}}(z,w)$ and ${\Gamma^{+-}}(z,w)$ transform as scalars represented
by analytic functions in all charts and symmetric with respect to $z
\leftrightarrow w $. Observe that these functions are defined at least up to 
the transform
\begin{equation*}\begin{split}
 &{\Gamma^{++}}^{\HH}(z,w) \rightarrow
  {\Gamma^{++}}^{\HH}(z,w) + \epsilon^{\HH}(z) + \epsilon^{\HH}(w),\\
 &{\Gamma^{+-}}^{\HH}(z,w) \rightarrow
  {\Gamma^{+-}}^{\HH}(z,w) + \epsilon^{\HH}(z) + \epsilon^{\HH}(w)
\end{split}\end{equation*}
for any analytic function $\epsilon^{\HH}(z)$ such that
\begin{equation*}
 \overline{\epsilon^{\HH}(z)} = \epsilon^{\HH}(\bar z).
\end{equation*}
These additional terms are cancelled due to the choice
of minus in the pairs `$\mp$' in 
\eqref{Formula: Gamma = Gamma^++ + bar Gamma^++ + Gamma^+- + bar Gamma^+-}. 
In the reverse case, the contribution of these functions is equivalent to
zero bilinear functional over $\Hc_s^*$.

Consider the  equation
(\ref{Formula: L_sigma Gamma = 0}). It leads to
\begin{equation}\begin{split}
	&\Lc_{\sigma} {\Gamma^{++}}^{\HH}(z,w) = \beta_2^{\HH}(z) +
	\beta_2^{\HH}(w),\quad \Lc_{\sigma} {\Gamma^{+-}}^{\HH}(z,w) = \beta_2^{\HH}(z)
	+ \beta_2^{\HH}(w)
	\label{Formula: L Gamma^++ = beta + beta, L Gamma^+- = beta + beta}
\end{split}\end{equation}
for any analytic function $\beta_2^{\HH}(z)$ such that
$\overline{\beta_2^{\HH}(z)} = \beta_2^{\HH}(\bar z)$.
One can fix this freedom, i.e., the function $\beta_2^{\HH}$, by
the conditions
\begin{equation}\begin{split}
 &\Lc_{\sigma} {\Gamma^{++}}^{\HH}(z,w) = 0,\quad
 \Lc_{\sigma} {\Gamma^{+-}}^{\HH}(z,w) = 0.
 \label{Formula: L Gamma^++ = 0, L Gamma^+- = 0}
\end{split}\end{equation}
Thus,
${\Gamma^{++}}^{\HH}(z,w)$ and ${\Gamma^{+-}}^{\HH}(z,w)$
are fixed up to a non-essential constant.

The second equation
(\ref{Formula: Hadamard's formula})
can be reformulated now as
\begin{equation}\begin{split}
 &\Lc_{\delta} {\Gamma^{++}}^{\HH}(z,w)
 + \Lc_{\sigma} {\eta^+}^{\HH}(z) \Lc_{\sigma} {\eta^+}^{\HH}(w)
 = \beta_1^{\HH}(z) + \beta_1^{\HH}(w),\\
 &\Lc_{\delta} {\Gamma^{+-}}^{\HH}(z,\bar w)
 + \Lc_{\sigma} {\eta^+}^{\HH}(z) \overline{\Lc_{\sigma} {\eta^+}^{\HH}(w)}
 = \beta_1^{\HH}(z) + \beta_1^{\HH}(\bar w)
 \label{Formula: L Gamma^++ + L sigma^+ L sigma^+ = e + e}
\end{split}\end{equation}
for any analytic function $\beta_1^{\HH}(z)$ such that
$\overline{\beta_1^{\HH}(z)} = \beta_1^{\HH}(\bar z)$ analogously to 
\eqref{Formula: L Gamma^++ = beta + beta, L Gamma^+- = beta + beta}.
We can conclude now that the system
(\ref{Formula: L eta + 1/2 L^2 eta}--\ref{Formula: L_sigma Gamma = 0})
is equivalent to the system
(\ref{Formula: eta = eta^+ + eta^-}),
(\ref{Formula: delta/sigma j + mu [sigma,delta]/sigma + 1/2 L j = ibeta}),
(\ref{Formula: Gamma = Gamma^++ + bar Gamma^++ + Gamma^+- + bar Gamma^+-}),
(\ref{Formula: L Gamma^++ + L sigma^+ L sigma^+ = e + e}),
and
(\ref{Formula: L Gamma^++ = 0, L Gamma^+- = 0}).

Use now the fact
\begin{equation}
	{\Gamma^{++}}^{\HH}(z,w) = -\frac12 \log(z-w) + \text{analytic terms}
	\label{Formula: Gamma^++ = - 1 log z-w + hol}
\end{equation}
to obtain a singularity of
${j^+}^{\HH}$
about the origin in the half-plane chart.
Relation
(\ref{Formula: delta and sigma normalized form})
yields
\begin{equation}
 \frac{2}{z} \de_z \left( -\frac12 \log(z-w) \right) 
 + \frac{2}{w} \de_w \left( -\frac12 \log(z-w) \right)
 = \frac{1}{zw},
 \label{Formula: 2/z de_z - 12 log(z-w) + 2/z de_w - 12 log(z-w) = ...}
\end{equation}
hence,
\begin{equation}\begin{split}
	{j^+}^{\HH}(z) &= \frac{ -i  }{ z} + \text{holomorphic part}.
	\label{Formula: j^+ = mp/z + hol}
\end{split}\end{equation}
The choice of the sign of
${j^+}^{\HH}(z)$
is irrelevant. We made the choice above to be consistent with
\cite{Sheffield2010}.
The analytic terms in
(\ref{Formula: Gamma^++ = - 1 log z-w + hol})
can give a term with the sum of simple poles at $z$ and $w$ but in the form of
the product $1/(zw)$.

From
(\ref{Folmula: phi+ = int ...})
we conclude that the singular part of
${\eta^+}^{\HH}(z)$
is proportional to the logarithm of~$z$:
\begin{equation*}\begin{split}
	&{\eta^+}^{\HH}(z) = \frac{i}{\sqrt{\kappa}} \log z + \text{holomorphic part}.
\end{split}\end{equation*}
Thus, we have
\begin{equation}\begin{split}
	\eta^{\HH}(z) =& 
	\frac{-2}{\sqrt{\kappa}} \arg z + \text{non-singular harmonic part}.
	\label{Formula: eta = -2/k (arg z - pi/2) + hol}
\end{split}\end{equation}
We can chose the additive constant such that, in the half-plane chart, we have
\begin{equation}
  \eta^{\HH}(+0)=-\eta^{\HH}(-0) = \frac{\pi}{\sqrt{\kappa}}
  \label{Formula: eta(+0) = eta(-0) = ...}
\end{equation}
in the forward case. This provides the jump $2 \pi/\sqrt{\kappa}$ of the value
of $\eta$ at the boundary near the origin, which is exactly the same behaviour
of $\eta$ needed for the flow line construction in \cite{Miller2012} and
\cite{Sheffield2010}.
However, the form
\eqref{Formula: eta(+0) = eta(-0) = ...}
is not chart independent, and only the jump
$2 \pi/\sqrt{\kappa} = \eta^{\psi}(+0) - \eta^{\psi}(-0)$
does not change its value if the boundary of $\psi(\Dc)$ is not singular in the
neighbourhood of the source $\psi(a)$.

Substitute now
\eqref{Formula: j^+ = mp/z + hol}
in 
\eqref{Formula: delta/sigma j + mu [sigma,delta]/sigma + 1/2 L j = ibeta} 
in the half-plane chart, use 
\eqref{Formula: delta and sigma normalized form},
and consider the corresponding Laurent series.
We are interested in the coefficient near the first term
$\frac{1}{z^2}$:
\begin{equation*}
	\frac{2}{z} \frac{1}{-\sqrt{\kappa}} \frac{- i }{ z} +
	\mu \frac{-2}{z^2} + \frac12 (-\sqrt{\kappa})\frac{ i }{ z^2}
	+ o\left(\frac{1}{z^2}\right) = C^+.
\end{equation*}
We can conclude that
\begin{equation}\begin{split}
	&\mu = i\frac{4 - \kappa}{4\sqrt{\kappa}}.
	\label{Formula: mu = i pm (-4 pm k)/4/k}
\end{split}\end{equation}
Thus, the pre-pre-Schwarzians 
\eqref{Formula: tilde eta = eta - chi arg}
with $\chi$ given by
(\ref{Formula: chi = 2/k - k/2}) 
is only one that can be realized.
\end{proof}

\section{Coupling of GFF with  the Dirichlet boundary conditions}
\label{Section: The coupling in case of Dirichlet boundary condition of the covariance function}

In this section, we consider a special solution to the system 
(\ref{Formula: L eta + 1/2 L^2 eta}--\ref{Formula: L_sigma Gamma = 0})
with the help of Lemma 
\ref{Lemma: eta structure}.
We assume the Dirichlet boundary condition for $\Gamma$ considered in Example
\ref{Example: Dirichlet boundary conditions Gamma},
and find the general solution in this case. In other words, we systematically study which of $(\delta,\sigma)$-SLE can be coupled with GFF if $\Gamma=\Gamma_{D}$.

Let us formulate the following general theorem, and then consider each of the allowed cases of $(\delta,\sigma)$-SLE individually.

\begin{theorem}
Let a ($\delta$,$\sigma$)-SLE be coupled to the GFF with
$\Hc=\Hc_s$, $\Gamma=\Gamma_D$, and
let $\eta$ be the pre-pre-Schwarzian
\eqref{Formula: tilde eta = eta - chi arg} 
of order $\chi$. Then  the only special combinations of
$\delta$ and $\sigma$ for  $\kappa\neq 6$ and $\nu\neq 0$ summarized in Table \ref{Table: some simple cases},
and their arbitrary combinations when $\kappa=6$ and $\nu=0$ are possible.
\label{Theorem: ppS -> simple cases}
\end{theorem}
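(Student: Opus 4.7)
The plan is to apply Theorem \ref{Theorem: The coupling theorem} together with Lemma \ref{Lemma: eta structure} in the special case $\Gamma=\Gamma_D$, and classify the solutions of the resulting reduced system by a Laurent coefficient comparison. Since $\Gamma_D$ is Möbius invariant by \eqref{Formula: Lv Gamma_D = 0} and every complete field $\sigma$ of the form \eqref{Formula: sigma with 1 slit} generates Möbius automorphisms, the third equation \eqref{Formula: L_sigma Gamma = 0} is automatic. The remaining task is to impose \eqref{Formula: L eta + 1/2 L^2 eta} and \eqref{Formula: Hadamard's formula} on $(\eta,\Gamma_D)$ with $(\delta,\sigma)$ in the normalized form \eqref{Formula: delta and sigma normalized form}, and with $\eta$ described by Lemma \ref{Lemma: eta structure} through its analytic piece $\eta^{+}$ and its Lie derivative $j^{+}=\Lc_{\sigma}\eta^{+}$.

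The first key computation is the boundary Hadamard formula for $\Gamma_D^\HH$. The complete part $\delta_{-1}+\delta_{0}z+\delta_{1}z^{2}$ of $\delta$ preserves $\Gamma_D$ and therefore drops out, so only the contribution of $2/z$ survives; a direct differentiation of \eqref{Formula: Gamma_D = Log...} yields the rank-one factorization $\Lc_{\delta}\Gamma_D^{\HH}(z,w)=-4\,\dfrac{\Im z}{|z|^{2}}\dfrac{\Im w}{|w|^{2}}$, i.e. minus the product of two boundary Poisson kernels at the source $0$. Plugging this into \eqref{Formula: Hadamard's formula} forces $\Lc_{\sigma}\eta^{\HH}(z)$ to be a real multiple of $2\Im z/|z|^{2}$; combined with the leading-order normalization $j^{+}(z)=-i/z+O(1)$ established in \eqref{Formula: j^+ = mp/z + hol} this pins down the analytic piece globally to $j^{+}(z)=-i/z$ with no holomorphic tail, since any nonzero tail would break the rank-one factorization.

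With $j^{+}=-i/z$ fixed, substitute into the master ODE \eqref{Formula: delta/sigma j + mu [sigma,delta]/sigma + 1/2 L j = ibeta} using $\mu=i(4-\kappa)/(4\sqrt{\kappa})$ from \eqref{Formula: mu = i pm (-4 pm k)/4/k} and $(\delta,\sigma)$ of the form \eqref{Formula: delta and sigma normalized form}. Clearing the denominator $\sigma^{\HH}(z)z^{2}$ turns this identity into a polynomial equation of degree four in $z$. Matching the coefficients of $z^{0},\ldots,z^{4}$ yields a finite algebraic system for the five shape parameters $\delta_{-1},\delta_{0},\delta_{1},\sigma_{0},\sigma_{1}$ together with an integration constant $\beta\in\mathbb{R}$ playing the role of the drift $\nu$. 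The $z^{0}$-equation merely reproduces the value of $\mu$; the $z^{1}$-equation gives the linear relation $\delta_{-1}=4\sigma_{0}$; the remaining three equations couple $(\delta_{0},\delta_{1},\beta)$ to $(\sigma_{0},\sigma_{1},\kappa)$ through prefactors proportional to $(\kappa-8)$, $(\kappa-6)$, and $(12-\kappa)$. Solving case-by-case according to which of $\sigma_{0},\sigma_{1}$ vanish and normalizing the surviving free parameter via the reparametrization \eqref{Formula: R - transform}, one recovers exactly the entries of Table \ref{Table: some simple cases}; sufficiency follows by substituting each of these back into the three equations and verifying them directly.

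The $z^{3}$-coefficient carries $\delta_{1}$ with prefactor $(\kappa-6)/2$, which is the algebraic origin of the exceptional point $\kappa=6$: for $\kappa\neq 6$ this equation determines $\delta_{1}$, closing the system to a finite list, whereas for $\kappa=6$ it becomes a constraint not involving $\delta_{1}$, and the additional hypothesis $\nu=0$ (hence $\beta=0$) renders the remaining equations compatible with an unrestricted mixture of the Table entries, giving the claimed freedom. The main obstacle I expect is not the coefficient bookkeeping itself but justifying the crucial rigidity step in paragraph two, namely that the Dirichlet hypothesis alone forces the holomorphic tail in \eqref{Formula: j^+ = mp/z + hol} to vanish identically; this is where the rank-one structure of $\Lc_{\delta}\Gamma_D$ is used in an essential way. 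A secondary care-point, flagged after \eqref{Folmula: phi+ = int ...}, is the single-valuedness of $\eta^{+}$ when $\sigma$ has an interior zero, which here excludes the radial type from the Dirichlet classification and motivates its separate treatment in Section \ref{Section: Coupling to twisted GFF}.
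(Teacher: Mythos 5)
Your overall strategy is the same as the paper's: reduce to Lemma \ref{Lemma: eta structure}, note that \eqref{Formula: L_sigma Gamma = 0} is automatic by \eqref{Formula: Lv Gamma_D = 0}, extract $j^{+}$ from the Hadamard identity, substitute into \eqref{Formula: delta/sigma j + mu [sigma,delta]/sigma + 1/2 L j = ibeta}, normalize $\sigma^{\HH}$ to the three forms $-\sqrt{\kappa}$, $-\sqrt{\kappa}(1\mp z^{2})$ via \eqref{Formula: S - transform} and \eqref{Formula: R - transform}, and match Laurent coefficients. However, there is a genuine error in your rigidity step. Your rank-one factorization $\Lc_{\delta}\Gamma_D^{\HH}(z,w)=-4\,\Im z\,\Im w/(|z|^{2}|w|^{2})$ is correct, and it does pin down the \emph{real} harmonic function $j=\Lc_{\sigma}\eta$ to $\pm 2\Im z/|z|^{2}$. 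But this does \emph{not} pin down the analytic piece $j^{+}$: since $j=j^{+}+\overline{j^{+}}$, any purely imaginary additive constant $i\alpha$ ($\alpha\in\mathbb{R}$) in $j^{+}$ cancels against its conjugate and leaves $j$ unchanged. The correct conclusion, which is the paper's \eqref{Formula: j+ = i/z + ia}, is $j^{+}(z)=-i/z+i\alpha$ with $\alpha$ a free real parameter (absorbed into the $\beta_{1}(z)+\beta_{1}(w)$ freedom of \eqref{Formula: L Gamma^++ + L sigma^+ L sigma^+ = e + e}). This constant is exactly the drift: $\nu=-2\alpha$, and it reappears in $\eta$ itself as the term $-\frac{\nu}{\sqrt{\kappa}}\Im z$ in \eqref{Formula: eta - chordal with drift in H}. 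By forcing $\alpha=0$ you lose it irretrievably: in the chordal case the $z^{1}$-coefficient of the master ODE reads $2\alpha-\delta_{-1}=0$, so $\alpha=0$ forces $\delta_{-1}=0$ and eliminates lines 1, 3 and 6 of Table \ref{Table: some simple cases} with $\nu\neq 0$ — i.e.\ your classification would contradict the very statement being proved. The integration constant $\beta$ on the right-hand side of the master ODE cannot substitute for $\alpha$; in the paper's solution it is a dependent quantity (e.g.\ $\beta=-2\alpha^{2}/\sqrt{\kappa}$ in the chordal case), not an independent source of drift.

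Two secondary inaccuracies. First, your claimed $z^{1}$-relation $\delta_{-1}=4\sigma_{0}$ does not match the paper's computation: in the normalization \eqref{Formula: delta and sigma normalized form} reduced to the three canonical $\sigma$'s one has $\sigma_{0}=0$ and the relation is $\delta_{-1}=2\alpha$. Second, the multivaluedness of $\eta^{+}$ when $\sigma$ has an interior zero does \emph{not} exclude the radial type from the Dirichlet classification — radial SLE with drift is line 6 of Table \ref{Table: some simple cases}, and the paper handles the ramification by lifting to the universal cover $\hat{\Dc}_{b}$ in Section \ref{Section: Radial SLE with drift}; the twisted construction of Section \ref{Section: Coupling to twisted GFF} concerns a different covariance $\Gamma_{\text{tw}}$, not the Dirichlet one. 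With the constant $i\alpha$ restored and the case analysis redone (three choices of $\sigma$, each yielding a small algebraic system in $\delta_{-1},\delta_{0},\delta_{1},\alpha,\beta,\kappa$ whose degenerate branch is $\kappa=6$, $\alpha=0$), your argument would align with the paper's proof.
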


 \afterpage{
\begin{table}
\begin{tabular}{| c | p{3cm}  | c | c | c | c |}
 \hline
  N & Name & 
    $\delta^{\HH}(z)$ & $\sigma^{\HH}(z) $ & $\alpha$ & $\beta$ \\ \hline
  1 & Chordal with drift &
   $ \frac{2}{z} - \nu $ & 
   $ -\sqrt{\kappa} $ &
   $ -\frac{\nu}{2} $ &
   $ \frac{-\nu^{2}}{2\sqrt{\kappa}} $ \\ \hline
  2 & Chordal with fixed time change & 
   $ \frac{2}{z}+2\xi z$ & 
   $ -\sqrt{\kappa} $ & 
   $ 0 $ &
   $ \frac{\xi(8-\kappa)}{2\sqrt{\kappa}} $ \\ \hline
  3 & Dipolar with drift & 
   $ 2\left(\frac{1}{z} - z \right) - \nu (1-z^2) $ & 
   $ -\sqrt{\kappa}(1-z^2) $ & 
   $ -\frac{\nu}{2} $ & 
   $ \frac{4-\nu^{2}}{2\sqrt{\kappa}} $ \\ \hline
  4 & One right fixed boundary point & 
   $\begin{aligned} 
    &\frac{2}{z} +\kappa-6 + \\
    &+2(3-\kappa+\xi)z + \\
    &+(\kappa - 2 - 2 \xi)z^2 
   \end{aligned}$ & 
   $ -\sqrt{\kappa}(1-z^2) $ & 
   $ \frac12(\kappa-6)$ & 
   $ \frac{\xi(8-\kappa)}{2\sqrt{\kappa}} $ \\ \hline
  5 & One left fixed boundary point & 
   $\begin{aligned} 
    &\frac{2}{z} -(\kappa-6) + \\
    &+2(3-\kappa+\xi)z + \\
    &-(\kappa - 2 - 2 \xi)z^2 
   \end{aligned}$ & 
   $ -\sqrt{\kappa}(1-z^2) $ & 
   $ -\frac12(\kappa-6)$ & 
   $ \frac{\xi(8-\kappa)}{2\sqrt{\kappa}} $ \\ \hline
  6 & Radial with drift & 
   $ 2\left( \frac{1}{z} + z \right) - \nu( 1+z^2 ) $ & 
   $ -\sqrt{\kappa}(1+z^2) $ & 
   $ -\frac{\nu}{2} $ &
   $ \frac{4-\nu^{2}}{2\sqrt{\kappa}} $ \\ \hline
\end{tabular}
 \caption{($\delta$,$\sigma$)-SLE types that can be coupled with 
 CFT with the Dirichlet boundary conditions ($\Gamma=\Gamma_0$). 
 Each of the pairs of $\delta$ and $\sigma$ is given in the half-plane chart for simplicity.
 For the same purpose we use the normalization (\ref{Formula: delta and sigma normalized form}). 
 The details are in Sections 
 \ref{Section: Chrodal SLE with drift} 
 and 
 \ref{Section: Radial SLE with drift}.
 See also  the comments after  Theorem \ref{Theorem: ppS -> simple cases}.}
\label{Table: some simple cases}
\end{table}
}

Table \ref{Table: some simple cases} consists of 6 cases, each of which  is a one-parameter family of ($\delta$,$\sigma$)-SLEs parametrized by the drift $\nu\in\mathbb{R}$, and by a parameter $\xi\in\mathbb{R}$. These cases may overlap for vanishing values of 
$\nu$ or $\xi$. 

In other words, different combinations of $\delta$ and $\sigma$ can correspond essentially to the same process in $\Dc$ but written in  different coordinates. We give one example of such choices in each case  of $\delta$ and $\sigma$ . 

Some particular cases of CFTs studied here were considered earlier in the literature.
The chordal SLE without drift (case $1$ from the table with $\nu=0$) was considered in \cite{Kang2011},
the radial SLE without  drift (case $3$ from the table with $\nu=0$)  in \cite{Kang2012a}, and
 the dipolar SLE without  drift (case $4$ from the table with $\nu=0$) appeared in \cite{Kang}.
The case $2$ actually corresponds to the same measure defined by the chordal SLE but stopped at the  time
$t=1/4\xi$ 
(see Section \ref{Section: Chordal SLE with fixed time reparametrization}). 
The cases $5$ and $6$ are mirror images of each other. They are discussed in Section 
\ref{Section: SLE with one fixed boundary point}.

\begin{remark}
An alternative approach to the relation between CFT and SLE based on 
the highest weight representation of the Virasoro algebra was considered in
\cite{Bauer2006}
and
\cite{Friedrich2003}.
We remark that such a representation can not be constructed
for non-zero drift 
($\nu\neq 0$). 
\end{remark}

\medskip
\noindent
{\it Proof of Theorem \ref{Theorem: ppS -> simple cases}}.
Let us use Theorem \ref{Theorem: The coupling theorem} and assume the
Dirichlet boundary conditions for $\Gamma=\Gamma_D$. 
\begin{equation*}
 {\Gamma^{++}}^{\HH}(z,w) = -\frac12 \log(z-w),\quad
 {\Gamma^{+-}}^{\HH}(z,\bar w) = -\frac12\log (z- \bar w)
\end{equation*}
in Theorem
\ref{Lemma: eta structure}.
The condition
(\ref{Formula: L Gamma^++ = beta + beta, L Gamma^+- = beta + beta})
is satisfied for any complete vector field $\sigma$ and some $\sigma$-dependent
$\beta_2$ which is irrelevant.

In order to obtain $j^+$ we remark first that due to the M\"obious invariance
\eqref{Formula: Lv Gamma_D = 0}
we can ignore the polynomial part of $\delta^{\HH}(z)$
\begin{equation*}
 \Lc_{\delta} \Gamma^{\HH}(z,w) =
 \left(
  \frac{2}{z} \de_z + \frac{2}{\bar z} \de_{\bar z} +
  \frac{2}{w} \de_w + \frac{2}{\bar w} \de_{\bar w}
 \right)
 \Gamma^{\HH}(z,w).
\end{equation*}
Using
(\ref{Formula: L Gamma^++ + L sigma^+ L sigma^+ = e + e}),
(\ref{Formula: Gamma_D = Log...}),
and
(\ref{Formula: 2/z de_z - 12 log(z-w) + 2/z de_w - 12 log(z-w) = ...})
we obtain that
\begin{equation}\begin{split}
	&{j^+}^{\HH}(z) = \frac{-i}{z} + i\alpha,\quad \alpha\in\C, 
	\label{Formula: j+ = i/z + ia}
\end{split}\end{equation}
with
\begin{equation*}
 \beta_1(z) = \frac{\alpha}{z} - \frac{{\alpha}^2}{2}.
\end{equation*}

In order to satisfy all  conditions formulated  in Lemma
\ref{Lemma: eta structure}
we need to check
\eqref{Formula: delta/sigma j + mu [sigma,delta]/sigma + 1/2 L j = ibeta}.
Substituting
\eqref{Formula: j+ = i/z + ia} to
\eqref{Formula: delta/sigma j + mu [sigma,delta]/sigma + 1/2 L j = ibeta}
gives
\begin{equation}\begin{split}
 & \frac{\delta}{\sigma} j^+ + \mu \frac{[\sigma,\delta]}{\sigma}
 + \frac12 \Lc_\sigma^+ j^+ = i \beta ~ \Leftrightarrow \\
 & \delta j^+ + \mu\, [\sigma,\delta] + \frac12 \sigma \Lc_{\sigma}^+ j^+
  - i \beta \sigma  = 0
 ~ \Leftrightarrow \\
 & \delta^{\HH}(z) \left( \frac{-i }{z} + i \alpha \right)
 + \mu\, [\sigma,\delta]^{\HH}(z)
 + \frac12 \left(\sigma^{\HH}(z)\right)^2 \de \left( \frac{-i }{z} 
 + i \alpha \right) - i \beta \sigma^{\HH}(z) = 0.  \label{Formula: A phi = ib then ...}
\end{split}\end{equation}
In what follows, we will use the half-plane chart in the proof.
With the help of
(\ref{Formula: S - transform})
and
(\ref{Formula: R - transform})
we can assume without lost of generality that $\sigma^{\HH}$ is one of three
possible forms:
\begin{enumerate} [1.]
  \item $\sigma^{\HH}(z)=-\sqrt{\kappa}$,
  \item $\sigma^{\HH}(z)=-\sqrt{\kappa}(1-z^2)$,
  \item $\sigma^{\HH}(z)=-\sqrt{\kappa}(1+z^2)$.
\end{enumerate}
Let us consider these cases turn by turn.

\medskip
\noindent
{\bf 1. $\sigma(z) =-\sqrt{\kappa}$.} \\
 Inserting
 \eqref{Formula: delta and sigma normalized form} to
  relation
 \eqref{Formula: A phi = ib then ...}
 reduces to
 \begin{equation*}\begin{split}
  &\frac{-2 + \frac{\kappa}{2} -2 i \sqrt{\kappa}\mu }{z^2} +
  \frac{ 2 \alpha - \delta_{-1} }{z} +
  \left(\beta \sqrt{\kappa} + \alpha \delta_{-1} - \delta_{0}
   + i \sqrt{\kappa} \mu \delta_{0} \right)
   +\\+&
   z \left( \alpha \delta_0 - \delta_1 + 2 i \sqrt{\kappa} \mu \delta_1 \right)
   + z^2 \alpha \delta_1 \equiv 0 \quad \Leftrightarrow \\
  &(\ref{Formula: mu = i pm (-4 pm k)/4/k}),\quad
  2 \alpha - \delta_{-1}=0,\quad
  \beta \sqrt{\kappa} + \alpha \delta_{-1} - \delta_{0}
   + i \sqrt{\kappa} \mu \delta_{0} = 0,\quad \\
  &\alpha \delta_0 - \delta_1 + 2 i \sqrt{\kappa} \mu \delta_1 = 0,\quad
  \alpha \delta_1 =0.
\end{split}\end{equation*}
There are three possible cases:
\begin{enumerate}
 	\item 
  \begin{equation*}
  	\delta_{-1} = 2\alpha,\quad
  	\delta_0 = 0,\quad
  	\delta_{1} = 0,\quad
  	\kappa>0,\quad
  	\beta = \frac{-2 \alpha^2}{\sqrt{\kappa}}.
	\end{equation*}
	It is convenient to use the drift parameter $\nu$.
		Thus,
	\begin{equation*}
		\nu=-2\alpha,
	\end{equation*}
	which is related to the drift in the chordal equation.
	This case is presented in the first line of Table
	\ref{Table: some simple cases}.	  
	\item 
	\begin{equation*}
  	\delta_{-1} = 0,\quad
  	\delta_0 = -\frac{4 \beta \sqrt{\kappa}}{\kappa - 8},\quad
   	\delta_{1} = 0,\quad
   	\kappa>0,\quad
   	\alpha = 0.
	\end{equation*}
	This case is presented in the second line of Table ($\xi\in\mathbb{R}$)
	and discussed in details in Section
	\ref{Section: Chordal SLE with fixed time reparametrization}.
	\item 
	\begin{equation*}
		\delta_{-1}=0,\quad
		\delta_0=2\sqrt{5}\beta,\quad
		\delta_1\in\mathbb{R},\quad
		\kappa=6,\quad
		\alpha=0.
	\end{equation*}	
	This is a general case of $\delta$ with $\kappa=6$ and $\nu=0$.	
\end{enumerate}

\medskip
\noindent
{\bf 2. $\sigma^{\HH}(z)=-\sqrt{\kappa}(1-z^2)$.} \\
Relation (\ref{Formula: A phi = ib then ...}) reduces to
\begin{equation*}\begin{split}
  &\frac{-2 + \frac{\kappa}{2} + 2 i \mu \sqrt{\kappa} }{z^2} +
   \frac{ 2 \alpha - \delta_{-1} }{z}
   \left(\beta \sqrt{\kappa} - \kappa + 6 i \sqrt{\kappa} \mu + \alpha
   \delta_{-1} - \delta_0 + i \sqrt{\kappa} \mu \delta_0 \right)
   +\\+&
   z \left( 2 i \sqrt{\kappa} \mu \delta_{-1} + \alpha \delta_0 - \delta_1
    + 2 i \sqrt{\kappa} \mu \delta_1 \right)
   z^2 \left( - \beta \sqrt{\kappa} + \frac{\kappa}{2} + i \sqrt{\kappa} \mu
   \delta_0 + \alpha \delta_1 \right) = 0 \quad \Leftrightarrow \\
  &(\ref{Formula: mu = i pm (-4 pm k)/4/k}),\quad
  2 \alpha - \delta_{-1} = 0,\quad
   \beta \sqrt{\kappa} - \kappa + 6 i \sqrt{\kappa} \mu + \alpha \delta_{-1} -
   \delta_0 + i \sqrt{\kappa} \mu \delta_0 = 0,\\
  & 2 i \sqrt{\kappa} \mu \delta_{-1} + \alpha \delta_0 - \delta_1
    + 2 i \sqrt{\kappa} \mu \delta_1 = 0,\quad
  - \beta \sqrt{\kappa} + \frac{\kappa}{2} + i \sqrt{\kappa} \mu \delta_0
    + \alpha \delta_1 = 0.
\end{split}\end{equation*}
There are four solutions each of which is a two-parameter family.
The first one corresponds to the dipolar SLE with the drift $\nu$,
line 3 in  Table \ref{Table: some simple cases}.
The second and the third equations are `mirror images' of each other,
as it can be seen from the  lines 4 and 5 in the table. They are parametrized
by $\xi:=\frac{2\beta\sqrt{\kappa}}{8-\kappa}$ and discussed in details in
Section 
\ref{Section: SLE with one fixed boundary point}. 
The fourth case is given by putting
\begin{equation*}
	\delta_{-1}=0,\quad
	\delta_0=2(\sqrt{6}\beta-3),\quad
	\delta_1\in\mathbb{R},\quad
	\kappa=6,\quad
	\alpha=0. 
\end{equation*}
This is a general form of $\delta$ with $\kappa=6$ and $\nu=0$.

\medskip
\noindent
{\bf 3. $\sigma^{\HH}(z)=-\sqrt{\kappa}(1+z^2)$.} \\
Relation (\ref{Formula: A phi = ib then ...}) reduces to
\begin{equation*}\begin{split}
  &\frac{-2 + \frac{\kappa}{2} -2 i \sqrt{\kappa} \mu }{z^2} +
   \frac{ 2 \alpha - \delta_{-1} }{z}
   +\\+&
   \left( \beta \sqrt{\kappa} - \kappa + 6 i \sqrt{\kappa} \mu + \alpha
   \delta_{-1} - \delta_0 + i \sqrt{\kappa} \mu \delta_0 \right)
   +\\+&
   z \left( 2 i \kappa \mu \delta_{-1} + \alpha \delta_0 - \delta_1
    + 2 i \sqrt{\kappa} \mu \delta_1 \right)
   +\\+&
   z^2 \left( -\beta \sqrt{\kappa} + \frac{\kappa}{2} + i \sqrt{\kappa} \mu
   \delta_0 + \alpha \delta_1 \right) = 0 \quad \Leftrightarrow \\
  &(\ref{Formula: mu = i pm (-4 pm k)/4/k}),\quad
  2 \alpha - \delta_{-1} = 0,\quad
   \beta \sqrt{\kappa} - \kappa + 6 i \sqrt{\kappa} \mu + \alpha
   \delta_{-1} - \delta_0 + i \sqrt{\kappa} \mu \delta_0 = 0,\\
  &2 i \kappa \mu \delta_{-1} + \alpha \delta_0 - \delta_1
    + 2 i \sqrt{\kappa} \mu \delta_1 = 0,\quad
  -\beta \sqrt{\kappa} + \frac{\kappa}{2} + i \sqrt{\kappa} \mu
  \delta_0 + \alpha \delta_1 =0.
\end{split}\end{equation*}
The first solution is presented in the line 6 of Table
\ref{Table: some simple cases},
where it is again convenient to introduce the parameter $\nu$
related to the drift in the radial equation. The second solution is
\begin{equation*}
	\delta_{-1}=0,\quad
	\delta_0=2(\sqrt{6}\beta-3),\quad
	\delta_1\in\mathbb{R},\quad
	\kappa=6,\quad
	\alpha=0. 
\end{equation*}
This is a general form of $\delta$ with $\kappa=6$ and $\nu=0$.
\quad\qed

\subsection{Chrodal SLE with drift}
\label{Section: Chrodal SLE with drift}
 It is natural to study this case in the half-plane chart, where
 \begin{equation}
  \delta_c^{\HH}(z):=\frac{2}{z} - \nu,\quad 
  \sigma_c^{\HH}(z):=-\sqrt{\kappa},\quad
  \nu \in \mathbb{R}.
  \label{Formula: delta and sigma chordal}
 \end{equation}
The form of $\eta^+$ can be found from
 (\ref{Formula: j^+ = L eta^+}) 
 by substituting   
 (\ref{Formula: j+ = i/z + ia}) as
 \begin{equation*}
  -\kappa^{\frac12} \de_z {\eta^+}^{\HH}(z) + \mu \cdot 0 = 
  \frac{-i}{z} + i \alpha.
 \end{equation*}
 Then
 \begin{equation}
  {\eta^+}^{\HH}(z) = 
  \frac{i}{\sqrt{\kappa}} \log z - \frac{i \alpha z}{\sqrt{\kappa}} + C^+ ,
  \label{Formula: phi+ - chordal with drift}
 \end{equation}
 and taking into account that $\alpha=-\frac{\nu}{2}$ we obtain 
 \begin{equation}
  {\eta}^{\HH}(z) = 
  \frac{-2}{\sqrt{\kappa}} \arg z  
  - \frac{ \nu}{\sqrt{\kappa}} \Im z + C.
  \label{Formula: eta - chordal with drift in H}
 \end{equation}

Let us present here an explicit form of the evolution of
 the one-point function $S_1(z)=\eta(z)$
 \begin{equation*}\begin{split}
  {M}_t^{\HH}(z) =& ({G_t^{-1}}_* \eta)^{\HH}(z) = 
  \frac{-2}{\sqrt{\kappa}} \arg G_t^{\HH}(z)  
  - \frac{ \nu}{\sqrt{\kappa}} \Im G_t^{\HH}(z)
  +\frac{\kappa-4}{2\sqrt{\kappa}} \arg {G_t^{\HH}}'(z) + C
 \end{split}\end{equation*}
 This expression with $\nu=0$ coincides (up to a constant) 
 with the analogous one from 
 \cite[Section 8.5]{Kang2011}.

Now we need to work with a concrete form of the space $\Hc_s$  discussed in Section 
\ref{Section: Test function}. It is convenient to define it in the half-plane chart. In other charts it can be obtained with  the rule
(\ref{Formula: f^tilde psi = tau^2 f^psi(tau)}).
We  choose the subspace ${C^{\infty}}^{\HH}_0$  of $C^{\infty}$-smooth  functions 
with compact support																											in the half-plane chart. The function $\phi$ defining metric can be, for example, zero in the half-plane chart, $\phi^{\HH}(z)\equiv 0$.
This choice guarantees that the integrals in
(\ref{Formula: dI int G^-1 eta = int dI G^-1 eta})
and
(\ref{Formula: dI int G^-1 Gamma = int dI G^-1 Gamma})
are well-defined with $\eta$ as above and $\Gamma=\Gamma_D$.

\subsection{Dipolar SLE with drift}
\label{Section: Dipolar SLE with drift} 
 
 The dipolar SLE equation is usually defined in the strip chart, see (\ref{Formula: tau_H S}), as
 \begin{equation}
  \dI G_t^{\SSS}(z) 
  = \cth \frac{G_t^{\SSS}(z)}{2} dt
  -\sqrt{\kappa} d^{\rm{It\^{o} }} B_t
  -\frac{\nu}{2} dt,  
 \end{equation}
 where we add the drift term $\frac{\nu}{2} dt $
 in the It\^{o} differentials with the same form in terms of Stratonovich 
 \begin{equation}
  \dS G_t^{\SSS}(z) 
  = \cth \frac{G_t^{\SSS}(z)}{2} dt
  -\sqrt{\kappa} d^{\rm{S}} B_t
  -\frac{\nu}{2} dt
 \end{equation}
 because $\sigma^{\SSS}(z)$ is constant. The vector fields $\delta$ and $\sigma$ in the strip chart, 
see (\ref{Formula: tau_H S}),
are
\begin{equation*}\begin{split}
 \delta^{\SSS}(z) 
 = 4\cth \frac{z}{2} - \frac{\nu}{2},\quad  
 \sigma^{\SSS}(z) 
 = - \sqrt{\kappa},\quad 
 \nu \in \mathbb{R},
\end{split}\end{equation*}
 
The from of $\delta$ and $\sigma$ in the half-plane can be obtained by
(\ref{Formula: tilde v = 1/dtau v(tau)}) as
\begin{equation*}\begin{split}
 &\delta^{\HH}(z) 
 = \frac{1}{\tau_{\SSS,\HH}(z)} \delta^{\SSS}(\tau_{\SSS,\HH}(z))
 = \frac12 \left( \frac{1}{z}-z \right) - \frac{\nu}{2} (1-z^2),\\
 &\sigma^{\HH}(z) 
 = \frac{1}{\tau_{\SSS,\HH}(z)} \sigma^{\SSS}(\tau_{\SSS,\HH}(z))
 = - \frac{\sqrt{\kappa}}{2} (1-z^2).
\end{split}\end{equation*}
It is more convenient  to use for our purposes the transform 
(\ref{Formula: S - transform}) 
with $c=2$, and to define
 \begin{equation}
  \delta_d^{\HH}(z):= 2 \left( \frac{1}{z} - z \right) - \nu(1-z^2),\quad 
  \sigma_d^{\HH}(z):=-\sqrt{\kappa}(1-z^2),\quad
  \nu \in \mathbb{R}
  \label{Formula: delta and sigma dipolar in H}
 \end{equation}
 that possess normalization (\ref{Formula: delta and sigma normalized form}) 
  used in Table \ref{Table: some simple cases}.
 
 Let us first find $\eta^+$, $\eta$ and ${M_1}_t$ in the half-plane chart. 
 The same way as in the previous subsection we calculate 
 \begin{equation*}
  -\sqrt{\kappa}(1-z^2) \de_z {\eta^+}^{\HH}(z) 
  + \mu \left( -\sqrt{\kappa} (1-z^2) \right)' = 
  \frac{-i}{z} + i \alpha.
 \end{equation*}
 Taking into account 
 (\ref{Formula: mu = i pm (-4 pm k)/4/k}) 
 and $\alpha=-\nu/2$ we obtain
 \begin{equation*}
  {\eta^+}^{\HH}(z) =
  \frac{i}{\sqrt{\kappa}} \log z  
  + \frac{i(\kappa-6)}{4 \sqrt{\kappa}} \log (1-z^2)  
  + \frac{i\nu}{2\sqrt{\kappa}} \rm{arcth} z + C^+,
 \end{equation*}
 \begin{equation*}
  {\eta}^{\HH}(z) = 
  \frac{-2}{\sqrt{\kappa}} \arg z  
  - \frac{(\kappa-6)}{2 \sqrt{\kappa}} \arg (1-z^2) 
  - \frac{\nu}{\sqrt{\kappa}} \Im \rm{arcth} z + C,
 \end{equation*}
 \begin{equation*}\begin{split}
  &{M_c}_t^{\HH}(z) = ({G_t^{-1}}_* \eta)^{\HH}(z) 
  =\\=&
  \frac{-2}{\sqrt{\kappa}} \arg G_t^{\HH}(z)  
  - \frac{(\kappa-6)}{2 \sqrt{\kappa}} \arg (1-G_t^{\HH}(z)^2)
  -\\-& 
  \frac{\nu}{\sqrt{\kappa}} \Im \rm{arcth}\, G_t^{\HH}(z)
  +\frac{\kappa-4}{2\sqrt{\kappa}} \arg {G_t^{\HH}}'(z) + C
 \end{split}\end{equation*}
 
The corresponding relations in the strip chart are
\begin{equation*}\begin{split}
 \delta_d^{\SSS}(z) 
 = 4 \cth \frac{z}{2} - 2 \nu,\quad  
 \sigma_d^{\SSS}(z) 
 = - 2\sqrt{\kappa},\quad 
 \nu \in \mathbb{R},
\end{split}\end{equation*}
obtained with the help of
(\ref{Formula: tilde phi = phi - chi arg}). Then
\begin{equation*}
 \eta^{\SSS}(z) 
 = \eta^{\HH}(\tau_{\HH,\SSS}(z)) 
 + \frac{\kappa-4}{2\sqrt{\kappa}} \Im {\tau_{\HH,\SSS}}'(z),
\end{equation*}
where we used (\ref{Formula: chi = 2/k - k/2}) and the expression for 
$\tau_{\HH,\SSS}(z)=\tau_{\SSS,\HH}^{-1}(z)=\psi_{\HH}\circ\psi_{\SSS}^{-1}(z)$ 
that defines the strip chart 
(\ref{Formula: tau_H S}). 
Alternatively $\eta^{\SSS}(z)$ can be found as the solution to 
(\ref{Formula: j^+ = L eta^+}) 
in the strip chart
\begin{equation*}
 \eta^{\SSS}(z) 
 = \frac{-2 }{\sqrt{\kappa}} \arg \rm{sh} \frac{z}{2}
 - \frac{\nu}{2\sqrt{\kappa}} \Im z + C, 
\end{equation*}  
\begin{equation*}\begin{split}
 &{M_1}_t^{\SSS}(z) = ({G_t^{-1}}_* \eta)^{\SSS}(z) 
 =\\=&
 \frac{-2}{\sqrt{\kappa}} \arg \rm{sh} \frac{G_t^{\SSS}(z)}{2} 
 - \frac{\nu}{2\sqrt{\kappa}} \Im G_t^{\SSS}(z)
 +\frac{\kappa-4}{2\sqrt{\kappa}} \arg {G_t^{\SSS}}'(z) + C .
\end{split}\end{equation*}  
The expression for $\Gamma_D$ in the strip chart becomes
\begin{equation*}
 \Gamma_D^{\SSS}(z,w) 
 = \Gamma_D^{\HH}\left( \tau_{\HH,\SSS}(z),\tau_{\HH,\SSS}(w) \right) 
 = -\frac12 \log \frac{ \rm{sh}(\frac{z-w}{2}) \rm{sh}(\frac{\bar z-\bar w}{2})}
  { \rm{sh}(\frac{\bar z-w}{2}) \rm{sh}(\frac{z-\bar w}{2})}.
\end{equation*}

We remark that $\eta$ can be presented in a chart-independent form as a function of $\delta_d$ and $\sigma_d$ using
(\ref{Formula: eta = -mu log v - mu log v})
as
\begin{equation*}
 \eta = - \frac{2}{\sqrt{\kappa}} \arg \frac{\sigma_d^{\frac{\kappa}{4}}}
  {\sqrt{\sigma_d^2-\frac{\kappa}{4} \left(\delta_d-\frac{\nu}{\sqrt{\kappa}}\sigma_d\right)^2 }} + 
 \frac{\nu}{\sqrt{\kappa}} \Im \,\rm{arcth}\,
 \left(
  \frac{2}{\sqrt{\kappa}}\frac{\sigma_d}{\delta_d-\frac{\nu}{\sqrt{\kappa}}\sigma_d}
 \right) + C.
\end{equation*}
The expression under the square root vanishes only at the same points as $\sigma_d$.
As before, the choice of the branch is irrelevant because $\sigma$ can vanish at infinity only at the boundary, and $\eta$ is defined up to a constant $C$.

Now we work with the concrete form of the space $\Hc_s$  discussed in Section 
\ref{Section: Test function}. It is convenient to define it in the strip chart. 
We  choose the subspace ${C^{\infty}}^{\SSS}_0$  of $C^{\infty}$-smooth  functions 
with compact support
in the strip chart. 
The function $\phi$ defining the metric can be, for example, zero in the strip chart, $\phi^{\SSS}(z)\equiv 0$, which
guarantees that the integrals in
(\ref{Formula: dI int G^-1 eta = int dI G^-1 eta})
and
(\ref{Formula: dI int G^-1 Gamma = int dI G^-1 Gamma})
are well-defined with $\eta$ as above and $\Gamma=\Gamma_D$.

\subsection{Radial SLE with drift}
\label{Section: Radial SLE with drift} 

The radial SLE equation 
(\ref{Formula: G radial SLE in D}) 
is usually formulated in the unit disk chart. It can be defined with the vector fields 
(\ref{Formula: delta and sigma for radial SLE in D}), which admit the form (\ref{Formula: delta and sigma for radial SLE in H})
 in the half-plane chart.
By the same reasons as for the dipolar SLE, we can change  normalization and define
\begin{equation}
 \delta_r^{\HH}(z):= 2 \left( \frac{1}{z} + z \right) - \nu(1+z^2),\quad 
 \sigma_r^{\HH}(z):=-\sqrt{\kappa}(1+z^2),\quad
 \nu \in \mathbb{R}
 \label{Formula: delta nad sigam radial in H}
\end{equation}
that coincides with the expressions in Table \ref{Table: some simple cases}.

Let us give  here the expressions for
$\delta$, $\sigma$, $\Gamma_D$, $\eta$ and ${M_1}_t$ in three different charts: half-plane, 
logarithmic (see below for the details), and the unit disk using the same method as before. 
The calculations are  similar to the 
dipolar case. In fact, it is enough to  change some signs and 
replace the hyperbolic functions by trigonometric. 
In contrast to the dipolar case, $\eta$ is multiply defined. We discuss this difficulty at the end of this subsection. As for now, we just remark
that from the heuristic point of view this is not an essential problem. In any chart $\eta^{\psi}(z)$ just  changes its value only up to an irrelevant constant after the harmonic continuation about the fixed point of the radial equation. 

In the half-plane chart, we have
 
\begin{equation*}
 -\sqrt{\kappa}(1+z^2) \de_z {\eta^+}^{\HH}(z) 
 + \mu \left( -\sqrt{\kappa} (1+z^2) \right)' = 
 \frac{-i}{z} + i \alpha,
\end{equation*}
\begin{equation}
 {\eta}^{\HH}(z) = 
 \frac{-2}{\sqrt{\kappa}} \arg z  
 - \frac{(\kappa-6)}{2 \sqrt{\kappa}} \arg (1+z^2)
 - \frac{\nu}{\sqrt{\kappa}} \Im \rm{arctg}\, z + C,
 \label{Formula: phi - radial with drift in H}
\end{equation}
\begin{equation*}\begin{split}
 &{M_1}_t^{\HH}(z) = ({G_t^{-1}}_* \eta)^{\HH}(z) 
 =\\=&
 \frac{-2}{\sqrt{\kappa}} \arg G_t^{\HH}(z) 
 - \frac{(\kappa-6)}{2 \sqrt{\kappa}} \arg (1 + G_t^{\HH}(z)^2) 
 -\\-& 
 \frac{\nu}{\sqrt{\kappa}} \Im \rm{arctg}\, z
 +\frac{\kappa-4}{2\sqrt{\kappa}} \arg {G_t^{\HH}}'(z) + C 
\end{split}\end{equation*}
 analogously to 
\eqref{Formula: phi+ - chordal with drift}.

The unit disk chart is defined in 
(\ref{Formula: tau_H D}), and
\begin{equation*}
 \eta^{\D}(z) 
 = \frac{-2}{\sqrt{\kappa}} \arg (1-z)
 - \frac{\kappa-6}{2\sqrt{\kappa}} \arg z
 + \frac{\nu}{2\sqrt{\kappa}} \log |z| + C,
\end{equation*}
\begin{equation*}\begin{split}
 &{M_1}_t^{\D}(z) = ({G_t^{-1}}_* \eta)^{\D}(z)
 =\\=& 
 \frac{-2}{\sqrt{\kappa}} \arg (1-G_t^{\D}(z))
 - \frac{\kappa-6}{2\sqrt{\kappa}} \arg G_t^{\D}(z)
 + \frac{\nu}{2\sqrt{\kappa}} \log |G_t^{\D}(z)|
 + \frac{\kappa-4}{2\sqrt{\kappa}} \arg {G_t^{\D}}'(z) + C, 
\end{split}\end{equation*}
\begin{equation*}
 \Gamma_D^{\D}(z,w) 
 = \Gamma_D^{\D}\left( \tau_{\HH,\D}(z),\tau_{\HH,\D}(w) \right) 
 = -\frac12 \log \frac{ (z-w)(\bar z - \bar w) }
  { (\bar z-w)(z - \bar w) }.  	
\end{equation*}

The third chart is called \emph{logarithmic}, and it is defined by the transition map
\begin{equation}
 \tau_{\D,\LL}(z):=e^{iz}:\HH\map\D,\quad
 \tau_{\LL,\D}(z)=\tau_{\D,\LL}^{-1}(z) =-i\log \, z.
 \label{Formula: tau_D LL = ...}
\end{equation}
Therefore,
\begin{equation*}
 \tau_{\HH,\LL}(z) = \tau_{\HH,\D} \circ \tau_{\D,\LL}(z)= \rm{tg}\frac{z}{2}:\HH\map\HH,\quad
 \tau_{\LL,\HH}(z) = \tau_{\HH,\LL}^{-1}(z) = 2 \,\rm{arctg}\, z,
\end{equation*}
and $\psi^{\LL}$ is not a global chart map as we used before because there is a point (the origin in the unit-disc chart) which is mapped to  infinity. 
Besides, the function $\log$ is multivalued and the upper half-plane contains infinite number of identical copies of the  radial SLE slit 
($\tau_{\LL,\HH}(z+2\pi)=\tau_{\LL,\HH}(z)$). The advantage of this chart is that the automorphisms
$H_t[\sigma_r]^{\LL}$ induced by $\sigma_r$ 
(see \ref{Formula: d G = sigma G ds in a chart})
are  horizontal translations because $\sigma_r^{\LL}(z)$ is a real constant (see below). The corresponding relations for the radial SLE in the logarithmic chart can be easily obtained from the  dipolar SLE in the strip chart just by replacing the hyperbolic functions by their trigonometric analogs as
\begin{equation*}\begin{split}
  \delta_r^{\LL}(z) 
  = 4\,\rm{tg}\frac{2}{z} - 2 \nu,\quad  
  \sigma_r^{\LL}(z) 
  = - 2\sqrt{\kappa},\quad 
  \nu \in \mathbb{R}.
\end{split}\end{equation*}
\begin{equation*}
 \eta^{\LL}(z) 
 = \frac{-2 }{\sqrt{\kappa}} \arg \rm{sin} \frac{z}{2} 
 - \frac{\nu}{2\sqrt{\kappa}} \Im z + C, 
\end{equation*}  
\begin{equation*}\begin{split}
 &{M_1}_t^{\LL}(z) = ({G_t^{-1}}_* \eta)^{\LL}(z) 
 =\\=&
 \frac{-2}{\sqrt{\kappa}} \arg \sin \frac{G_t^{\LL}(z)}{2} 
 - \frac{\nu}{2\sqrt{\kappa}} \Im G_t^{\LL}(z)
 +\frac{\kappa-4}{2\sqrt{\kappa}} \arg {G_t^{\LL}}'(z) + C .
\end{split}\end{equation*}  
\begin{equation*}
 \Gamma_D^{\LL}(z,w) 
 = -\frac12 \log \frac{ \sin(\frac{z-w}{2}) \sin(\frac{\bar z-\bar w}{2})}
  { \sin(\frac{\bar z-w}{2}) \sin(\frac{z-\bar w}{2})},  	
\end{equation*}
This relations above coincide up to a constant with the analogous ones established in \cite{Kang2012a, Kang_preparation}.

We remark again that $\eta$ can be represented in a chart-independent form as a function of $\delta_r$ and $\sigma_r$ with the help of 
(\ref{Formula: eta = -mu log v - mu log v})
by the relation
\begin{equation*}
 \eta = - \frac{2}{\sqrt{\kappa}} \arg \frac{\sigma_r^{\frac{\kappa}{4}}}
  {\sqrt{\sigma_r^2+\frac{\kappa}{4} \left(\delta_r-\frac{\nu}{\sqrt{\kappa}}\sigma_r\right)^2 }} + 
 \frac{\nu}{\sqrt{\kappa}} \Im \,\rm{arcth}\,
 \left(
  \frac{2}{\sqrt{\kappa}}\frac{\sigma_r}{\delta_r-\frac{\nu}{\sqrt{\kappa}}\sigma_r}
 \right) + C.
\end{equation*}

In order to  define GFF for radial SLE carefully we need to generalize slightly the above approach. 
Let $b\in\Dc$ be a zero point  of $\delta_r$ and $\sigma_r$ simultaneously inside $\Dc$: 
$\delta_r^{\psi}(b)=\sigma_r^{\psi}(b)=0$ (for any $\psi$). We have 
$\psi^{\D}(b)=0$  in the unit disk chart,  $\psi^{\HH}(b)=i$ in the half-plane chart, and  $\psi^{\LL}(b)=\infty$ in the logarithmic chart.

Let $\hat \Dc_b$ be the universal cover of $\Dc\setminus\{b\}$. Then the logarithmic chart map 
$\psi^{\LL}:\hat \Dc \map \HH$ defines a global chart map of $\hat \Dc_b$.
The space $\Hc_s[\hat \Dc_b]$ in the logarithmic chart is defined as in Section
\ref{Section: Test function}
with 
$\phi^{\LL}\equiv 0$,
and we require in addition, that the support is bounded. The last condition guaranties the finiteness of functionals such as $\eta[f]$, $\Gamma_D[f_1,f_2]$, and the compatibility condition of $\Hc_s$, $\Gamma_D$, $\eta$ (as above), $\delta_r$ and $\sigma_r$ on $\hat \Dc_b$. 
The map 
$G_t:\Dc\setminus K_t \map \Dc$ 
is lifted to
$\hat G_t:\hat\Dc_b\setminus \hat K_t \map \hat\Dc_b$, 
where $\hat K_t\subset\hat\Dc_b$ 
is the corresponding union of the countable number of the copies of $K_t$.

Consideration of $\hat \Dc_b$ instead of $\Dc$
is possible thanks to a special property of radial L\"owner equation to have a fixed point $b\in\Dc$.
The  branch point $b$ is in fact eliminated from the domain of definition and 
the pre-pre-Schwarzian $\eta$ is well-defined on $\hat\Dc_b$.

\subsection{General remarks}

Here we are aimed at explaining  why all  three cases of $\eta$ above 
has the same form for $\kappa=6$ and $\nu=0$. Besides, we explain the relations between the chordal case and other cases considered in the next two subsections.

Let
$G_t$
be a
$(\delta,\sigma)$-SLE 
driven by 
$B_t$, and let
and 
$\tilde G_{\tilde t}$
be a
$(\tilde \delta,\tilde \sigma)$-SLE 
driven by
$\tilde B_{\tilde t}$ with the same
parameter $\kappa$.
Then there exists a stopping time $\tilde \tau>0$,
a family of random M\"obius automorphisms 
$M_{\tilde t}\, \colon\,  \Dc\map\Dc,~\tilde t\in [0,\tilde \tau)$,
and a random time reparametrization 
$\lambda:[0,\tilde \tau)\map[0,\tau)$
($\tau:=\lambda(\tilde \tau)$),
such that 
\begin{equation*}
 \tilde G_{\tilde t} = M_{\tilde t} \circ G_{\lambda_{\tilde t}},\quad \tilde t\in[0,\tilde \tau)
\end{equation*}
and
\begin{equation*}
 d\tilde B_{\tilde t} = 
 a_{\tilde t} d\tilde t + 
 \left(\dot \lambda(\tilde t)\right)^{-\frac12} dB_{\lambda(\tilde t)},\quad
 \tilde t \in [0,\tilde \tau)
\end{equation*}
for some continuous $a_{\tilde t}$.
In particular, this means that the laws of $\mathcal{K}_t$ and $\mathcal{\tilde K}_t$ induced by the 
$(\delta,\sigma)$-SLE 
and 
$(\tilde \delta,\tilde \sigma)$-SLE
correspondingly are absolutely continuous with respect to each other until some stopping time.
We proved this fact in \cite{Ivanov2014}. 
However, it is possible to show a bit more:
if $\nu=0$ for both $(\delta,\sigma)$-SLE and $(\tilde \delta,\tilde \sigma)$-SLE, then
the coefficient $a_{\tilde t}$
is proportional to $\kappa-6$.
Here the drift parameter $\nu$ is defined by
\begin{equation*}
 \nu :=\delta_{-1} + 3 \sigma_0,
\end{equation*}
see 
(\ref{Formula: allowed delta and sigma}).
This definition agrees with 
(\ref{Formula: delta and sigma chordal}),
(\ref{Formula: delta and sigma dipolar in H})
and
(\ref{Formula: delta nad sigam radial in H})
and is invariant with respect to
(\ref{Formula: M - transfrom}).
Since
$\dot \lambda_{\tilde t}^{\frac12} B_{\lambda(\tilde t)}$
agrees in law with 
$B_{\tilde t}$,
the random laws of
$\mathcal{K}_t$ and $\mathcal{\tilde K}_t$ are 
identical, not just absolutely continuous as above, at least until some stopping time. 

It can be observed that $\eta$ for 
the chordal
(\ref{Formula: eta - chordal with drift in H}), 
dipolar 
(\ref{Formula: eta - chordal with drift in H}),
and radial
(\ref{Formula: phi - radial with drift in H})
cases are identical for $\kappa=6$ and $\nu=0$.
This is a consequence of the above fact.
Special cases of chordal and radial SLEs were considered in 
\cite{Schramm2006}.

Besides, there are two special situations when $a_{\tilde t}$ is identically zero for all values of $\kappa>0$, not only for $\kappa=6$ as above. 
In order to study them, let us consider the chordal SLE $G_t$,
see (\ref{Formula: chordal SLE in H in Strat}),
and a differentiable time reparametrization $\lambda$, which possesses  property
(\ref{Formula: d lambda = a dt + b dB}).
Set
\begin{equation*}
 \tilde G_{\tilde t} := s_{c_{\tilde t}} \circ G_{\lambda_{\tilde t}},  
\end{equation*}
where $s_c:\Dc\map\Dc$ is the scaling flow 
($s_c^{\HH}(z)=e^{-c }z,~c\in\mathbb{R}$). 
In the half-plane chart we have
\begin{equation}
 \tilde G_{\tilde t}^{\HH}(z) = e^{-c_{\tilde t}} G_{\lambda_{\tilde t}}^{\HH}(z).
 \label{Formula: tilde G = e^c G}
\end{equation}
The Stratonivich differential of $\tilde G^{\HH}_{\tilde t}(z)$ is 
\begin{equation*}\begin{split}
 \dS \tilde G^{\HH}_{\tilde t}(z) =& 
 (\dS e^{-c_{\tilde t}}) G^{\HH}_{\lambda_{\tilde t} }(z) +
 e^{-c_{\tilde t}} \dS G^{\HH}_{\lambda_{\tilde t} }(z)  
 =\\=&
 (\dS e^{-c_{\tilde t}}) G^{\HH}_{\lambda_{\tilde t} }(z) +
 e^{-c_{\tilde t}} \dot \lambda_{\tilde t}
 \left( 
  \frac{2}{G^{\HH}_{\lambda_{\tilde t} }(z)}d\tilde t - \sqrt{\kappa} \dS B_{\lambda_{\tilde t}}
 \right)
\end{split}\end{equation*}
Due to 
(\ref{Folrmula: dB = lambda dB - 1/4 b/lambda dt}),
we have to assume that 
\begin{equation*}
 e^{-c_{\tilde t}} \dot \lambda_{\tilde t} \equiv \dot \lambda_{\tilde t}^{\frac12},\quad
\end{equation*}
in order to have an autonomous equation.
So
\begin{equation*}
 e^{-c_{\tilde t}} = \dot \lambda^{-\frac12}_{\tilde t},
\end{equation*}
and, consequently,
\begin{equation*}
 \dS e^{-c_{\tilde t}} =
 - \frac12 e^{-3 c_{\tilde t}} a_{\tilde t} d \tilde t
 - \frac12 e^{-3 c_{\tilde t}} b_{\tilde t} \dS \tilde B_{\tilde t},
 \end{equation*}
where we used 
(\ref{Formula: d lambda = a dt + b dB}).
Eventually, we conclude that
\begin{equation}\begin{split}
 \dS \tilde G^{\HH}_{\tilde t}(z) =&
 \left( 
  - \frac12 e^{-3 c_{\tilde t}} a_{\tilde t} d \tilde t
  - \frac12 e^{-3 c_{\tilde t}} b_{\tilde t} \dS \tilde B_{\tilde t}
 \right) 
  e^{ c_{\tilde t}} \tilde G^{\HH}_{\tilde t}(z) +
 \frac{2}{\tilde G^{\HH}_{\tilde t }(z)}d\tilde t 
 - \sqrt{\kappa} \dS \tilde B_{\tilde t} 
 + \frac14 \sqrt{\kappa} e^{-2c_{\tilde t}} b_{\tilde t} d \tilde t.
 \label{Formula: 3}   
\end{split}\end{equation}

In order to have time independent coefficients we  assume that $a_{\tilde t}$ and $b_{\tilde t}$ are proportional to $e^{2 c_{\tilde t}}$. Hence, define $\xi\in \mathbb{R}$ by
\begin{equation*}
 a_{\tilde t} = - 4 \xi e^{2c_{\tilde t}}.
\end{equation*}
Without lost of generality, we can assume that $b_{\tilde t}$ is of the following three possible forms
\begin{enumerate}[1.]
\item $b_{\tilde t}=0$,
\item $b_{\tilde t}=4\sqrt{\kappa} e^{2c_{\tilde t}} $,
\item $b_{\tilde t}=-4\sqrt{\kappa} e^{2c_{\tilde t}} $,
\end{enumerate}
because all other choices can be reduced to these three by 
(\ref{Formula: S - transform}).
The first case is considered in Section 
\ref{Section: Chordal SLE with fixed time reparametrization}. 
Other two cases are discussed in Section
\ref{Section: SLE with one fixed boundary point}.

\subsection{Chordal SLE with fixed time reparametrization.}
\label{Section: Chordal SLE with fixed time reparametrization}

Let $\xi \in(-\infty,+\infty) \setminus \{0\}$, and let $G_t$ be a chordal stochastic flow,
i.e., the chordal SLE (\ref{Formula: delta and sigma chordal}). 
Define 
\begin{equation*}
 \tilde G_{\tilde t}^{\HH}(z) = e^{2\xi \tilde t} G_{\lambda(\tilde t)}^{\HH}(z)  
\end{equation*}
in the half-plane chart and
assume that
\begin{equation*}\begin{split}
 &\lambda(\tilde t) := \frac{1 - e^{-4\xi \tilde t}}{4\xi};\\
 &\lambda:~ [0,+\infty)\map[0,(4\xi)^{-1}), \quad \xi > 0;\\
 &\lambda:~ [0,+\infty)\map[0,+\infty), \quad \xi < 0.
\end{split}\end{equation*}
This choice of $\lambda$ corresponds to 
$c_{\tilde t}=-2\xi \tilde t$, 
in the previous subsection. We remark, that the time reparametrization here is not random.

The flow $\tilde G_{\tilde t}$ satisfies the autonomous equation 
(\ref{Formula: Slit hol stoch flow Strat}) with 
\begin{equation}
 \delta^{\HH}(z) = \frac{2}{z} + 2\xi z,\quad 
 \sigma^{\HH}(z) = -\sqrt{\kappa},
 \label{Formula: delta and sigma for fixed time change case}
\end{equation}
which are the vector fields from the second string of Table \ref{Table: some simple cases}
and a special case of
(\ref{Formula: 3})
with $a_{\tilde t} = - 4 \xi e^{2c_{\tilde t}}$ and $b_{\tilde t}=0$.

There is a common zero of $\delta$ and $\sigma$ at infinity in the half-plane chart, 
so  infinity is a stable point
$\tilde G^{\HH}_{\tilde t}: \infty \map \infty$. But in contrast to the chordal case 
the coefficient at $z^{-1}$ in the Laurent series is not $1$ but 
$e^{2 \xi \tilde t}$.   
The vector field $\delta$ is of radial type if $\xi> 0$, and of dipolar type if $\xi< 0$.
It is remarkable that if $\xi<0$, then the equation induces exactly the same measure as 
the chordal stochastic flow but with a different time parametrization. 
If $\xi>0$ the measures also coincide when the chordal stochastic flow is stopped at the time 
$t=(4\xi)^{-1}$.   

By the reasons described above it is natural to expect that the GFF coupled with such kind of 
($\delta,\sigma$)-SLE is the same as in the chordal case, 
because it is supposed to induce the same random law of the flow lines. 
Indeed, $\sigma$ from 
(\ref{Formula: delta and sigma for fixed time change case})
coincides with that from the chordal case, hence, $\eta$ defined by 
(\ref{Formula: j = L eta}),
with 
$\alpha=0$ (see the table) also coincides with 
(\ref{Formula: eta - chordal with drift in H}) 
with $\nu=0$. Thus, the martingales are the same as in the chordal case.

\subsection{SLE with one fixed boundary point.}
\label{Section: SLE with one fixed boundary point} 

Let the vector fields $\delta$ and $\sigma$ be defined by the 5$^{\text{th}}$ and the 6$^{\text{th}}$ strings of Table 
\ref{Table: some simple cases}. 
There are two `mirror' cases. The ($\delta,\sigma$)-SLE  denoted here by 
$G_{t}$ 
is characterized by the stable point at $z=1$ (the 5$^{\text{th}}$ case) or
$z=-1$ (the 6$^{\text{th}}$ case)  in the half-plane chart. 
We will consider only the first (the 5$^{\text{th}}$ string) case, the second (the 6$^{\text{th}}$ string) is similar.

We will show below that this $(\delta,\sigma)$-SLE 
coincides with the chordal SLE up to a random time reparametrization for all values of $\kappa>0$. Let us apply a M\"obius transform 
$r_c\colon \Dc\map\Dc$ defined in 
\eqref{Formula: R - transform}
with $c=-1$
\begin{equation*}
 r_{-1}^{\HH}(z)=\frac{z}{1+z}.
\end{equation*}
In the half-plane chart, it maps the stable point $z=1$ to  infinity keeping the origin  and the normalization 
(\ref{Formula: delta and sigma normalized form}) unchanged.
It results in
\begin{equation*}\begin{split}
	&\tilde G_t := r_{-1} \circ G_t \circ r^{-1}_{-1}, \\
	&r_{-1}{}_* \delta^{\HH}(z) = \tilde \delta^{\HH}(z) = \frac{2}{z} + \kappa + 2\xi z,\\
	&r_{-1}{}_* \sigma^{\HH}(z) = \tilde \sigma^{\HH}(z) = -\sqrt{\kappa}(1+2z),
\end{split}\end{equation*}
and the equation for $\tilde G_t$ becomes
\begin{equation}\begin{split}
 &d^S \tilde G_t^{\HH}(z) = 
  \left(
   \frac{2}{\tilde G_t^{\HH}(z)} + \kappa + 2 \xi \tilde G_t^{\HH}(z)
  \right)dt -
  \sqrt{\kappa} \left(1 + 2 \tilde G_t^{\HH}(z) \right) d^S B_t, 
   \label{Formula: moved SLE in half-plane}	
\end{split}\end{equation}
which is a special case of 
(\ref{Formula: 3}) 
with 
$a_{\tilde t}=-4\xi \tilde t$
and
$b_{\tilde t}=4\sqrt{\kappa} e^{c_{\tilde t}}$.
In other words, the relation
(\ref{Formula: moved SLE in half-plane})
can be obtained from
(\ref{Formula: tilde G = e^c G})
with
$c_{\tilde t}=-2 \xi \tilde t +2\sqrt{\kappa} \tilde B_{\tilde t}$ under
the random time reparametrization
$\lambda_{\tilde t}= e^{4\xi \tilde t - 4\sqrt{\kappa} \tilde B_{\tilde t}}$.

It is remarkable that the subsurface $\tilde{\mathcal{I}}\subset \Dc$
defined in the half-plane chart as
\begin{equation*}
	\psi^{\HH} (\tilde{\mathcal{I}}) = \{ z\in\HH:\rm{Re}(z)>-\frac12\}
\end{equation*}
is invariant 
($G_t^{-1}(\tilde \Dc) \subset \tilde{\Dc}$)
if and only if 
$\xi\geq \kappa$. 
In order to see this, it is enough to calculate the real parts of 
\begin{equation*}\begin{split}
 &\tilde \delta^{\HH}(z) = \frac{2}{z} + \kappa + 2\xi z,\\
 &\tilde \sigma^{\HH}(z) = -\sqrt{\kappa}(1+2z),
\end{split}\end{equation*}
which are actually the horizontal components of the vector fields
at the boundary of 
$\psi^{\HH}(\tilde{\mathcal{I}})$ in $\HH$, 
$\{z\in\HH:~\rm{Re}(z)=-\frac12\}$,
\begin{equation*}\begin{split}
 \Re\left(\tilde \delta^{\HH}\left(-\frac12 + i h\right)\right) =& 
 \Re\left(\frac{2}{-\frac12 + i h} + \kappa + 2\xi\left(-\frac12 + i h\right)\right)
 =\\=&
 -\frac{1}{h^2+\frac14} + \kappa - \xi,\\
 \Re \left(\tilde \sigma^{\HH}\left(-\frac12 + i h\right) \right)=& 
 \Re \left(\sqrt{\kappa}\left(1+2\left(-\frac12 + i h\right)\right)\right)=0,\quad
 h>0.
\end{split}\end{equation*}
The first number is negative for all values of $h$ if and only if $\xi\geq \kappa$.

We remark that  the $r_{-1}$-transform has the invariant subsurface $\mathcal{I}:=r_{-1}(\tilde{\mathcal{I}}) \subset \Dc$ for the $(\delta,\sigma)$-SLE above, which is an upper half of the unit disk
\begin{equation*}
	\psi^{\HH}(\mathcal{I}) =\{ z\in\HH\colon |z|<1 \}
\end{equation*}

Similarly to the previous subsection it is reasonable to expect that the GFF coupled with this 
($\tilde\delta,\tilde \sigma$)-SLE is the same as in the chordal case, 
because it is supposed to induce the same random law of the flow lines. 
Indeed, the solution to 
(\ref{Formula: j^+ = L eta^+}),
with $\sigma$ and $\alpha$ as in the 5$^{\text{th}}$ string of the table, is
\begin{equation*}
 {\eta^+}^{\HH}(z) 
 = \frac{i}{\sqrt{\kappa}} \log z  
 + i\frac{\kappa-6}{2\sqrt{\kappa}} \arg (1-z) + C^+.
\end{equation*}
Thus,
\begin{equation*}
 \eta^{\HH}(z) 
 = \frac{-2}{\sqrt{\kappa}} \arg z 
 - \frac{\kappa-6}{\sqrt{\kappa}} \arg (1-z) + C.
\end{equation*}
After the $r_{-1}$-transform for $\tilde \delta$ and $\tilde \sigma$, we have
\begin{equation*}
 \tilde \eta^{\HH}(z) 
 = \frac{-2}{\sqrt{\kappa}} \arg z + C. 
 \end{equation*}
The last relation coincides with
(\ref{Formula: eta - chordal with drift in H})
with $\nu=0$. We remind that $\Gamma_0$ is invariant under M\"obius transforms, in particular, under $r_{-1}$.

\section{Coupling of GFF with Dirichlet-Neumann boundary conditions}
\label{Section: Coupling to GFF with Dirichlet-Neumann boundary condition}

We assume  in this chapter  that $\Gamma=\Gamma_{DN}$, see Example
\ref{Example: Gamma: Combined Dirichlet-Neumann boundary conditions}, 
which becomes 
\begin{equation}
 \Gamma_{DN}^{\mathbb{S}}(z,w)= -\frac12\log \frac{
  \rm{th}\frac{z-w}{4} \rm{th}\frac{\bar z-\bar w}{4}}
  {\rm{th}\frac{\bar z-w}{4} \rm{th}\frac{z -\bar w}{4}},\quad
  z,w\in \SSS:=\{z~:~ 0<\rm{Im}z<\pi \}.
 \label{Formula: G_DN^S = ...}
\end{equation} 
in the strip chart (\ref{Formula: tau_H S}).
It is exactly Green's function used in \cite{Kanga} (it is also a special case of \cite{Izyurov2010}).

The function $\Gamma_{DN}^{\SSS}$ satisfies the  boundary conditions
\begin{equation*}
 \left.\Gamma_{DN}^{\SSS}(x,w)\right|_{x\in \mathbb{R}}=0,\quad
 \left. \de_y \Gamma_{DN}^{\SSS}(x+iy,w)\right|_{x \in \mathbb{R},~y=\pi}=0,
\end{equation*}
the symmetry property, and
\begin{equation*}
 \Lc_{\sigma} \Gamma_{DN}(z,w)=0.
\end{equation*}

The coupling of GFF with this $\Gamma$ to the dipolar SLE  is geometrically motivated. We also require that both zeros of $\delta$ and $\sigma$ are at the same boundary points where $\Gamma_{DN}$ changes the boundary conditions from Dirichlet to Neumann.
In the strip chart these points are $\pm \infty$.

\begin{proposition}
Let the vector fields $\delta$ and $\sigma$ be as in
(\ref{Formula: delta and sigma dipolar in H}),
let $\Gamma=\Gamma_{DN}$, and let $\eta$ be a pre-pre-Schwarzian. Then the coupling is possible only for $\kappa=4$ and $\nu=0$. 
\label{Proposition: D-N coupling}
\end{proposition}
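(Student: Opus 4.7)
The plan is to apply Theorem \ref{Theorem: The coupling theorem} and reduce the problem to the system \eqref{Formula: L eta + 1/2 L^2 eta}--\eqref{Formula: L_sigma Gamma = 0}, then read off the obstructions from Laurent data at the source and at the two Neumann change-of-condition points. The strip chart \eqref{Formula: tau_H S} is the natural working coordinate.

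First I would verify that \eqref{Formula: L_sigma Gamma = 0} holds automatically: in the strip chart $\sigma^{\SSS}$ is a real constant, so $H_s[\sigma]$ acts by horizontal translations, and $\Gamma_{DN}^{\SSS}$ of \eqref{Formula: G_DN^S = ...} is manifestly invariant because it depends only on the pairwise differences $z-w$, $\bar z-\bar w$, $\bar z-w$, $z-\bar w$. So only \eqref{Formula: L eta + 1/2 L^2 eta} and \eqref{Formula: Hadamard's formula} impose real conditions on $(\kappa,\nu)$.

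Next I would invoke Lemma \ref{Lemma: eta structure} to decompose $\Gamma_{DN}$ as in \eqref{Formula: Gamma = Gamma^++ + bar Gamma^++ + Gamma^+- + bar Gamma^+-}, reading off $\Gamma_{DN}^{++,\SSS}$ and $\Gamma_{DN}^{+-,\SSS}$ directly from \eqref{Formula: G_DN^S = ...}. The Hadamard identity \eqref{Formula: L Gamma^++ + L sigma^+ L sigma^+ = e + e} then becomes a pair of equations, one for $\Gamma^{++}$ and one for $\Gamma^{+-}$. The $\Gamma^{+-}$ equation is the genuinely new constraint that was weaker in the Dirichlet analysis of Theorem \ref{Theorem: ppS -> simple cases}, because in the present Neumann case $\Gamma_{DN}^{+-}$ no longer vanishes as one approaches the upper boundary line of the strip. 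Combined with the prescribed source pole \eqref{Formula: j^+ = mp/z + hol} at the origin, each of these two identities imposes conditions on $j^{+}$ up to the auxiliary analytic function $\beta_1$.

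The decisive step is the balance at the two Neumann points $\pm\infty$ of the strip. The vector field $\delta^{\SSS}(z)=4\coth(z/2)-2\nu$ has finite limits $\pm 4 - 2\nu$ there, so $\Lc_\delta\Gamma_{DN}^{\pm,\SSS}$ picks up boundary residues coming from the reflection built into \eqref{Formula: G_DN^S = ...}, while the bilinear $j^+(z)j^+(w)$ and its $z\leftrightarrow \bar w$ analogue contribute products of the limiting values of $j^{+,\SSS}$ at $\pm\infty$. Expanding both sides of \eqref{Formula: L Gamma^++ + L sigma^+ L sigma^+ = e + e} to leading order at $\pm\infty$ and imposing that the result splits additively in $z$ and $w$ yields two independent linear conditions on $(\nu,\chi)$. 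I expect the first of these, odd in $z\leftrightarrow -z$, to force $\nu=0$, and the second, detecting the pre-pre-Schwarzian weight, to force $\mu=0$ in \eqref{Formula: mu = i pm (-4 pm k)/4/k}, i.e.\ $\chi=0$ and hence $\kappa=4$ by \eqref{Formula: chi = 2/k - k/2}. One finally checks that with these values the remaining first equation \eqref{Formula: delta/sigma j + mu [sigma,delta]/sigma + 1/2 L j = ibeta} is satisfied identically.

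The main obstacle I foresee is not any single calculation but the bookkeeping of three different asymptotic regimes simultaneously: the expansion at the source $0$ which fixed $\mu$ already in the proof of Lemma \ref{Lemma: eta structure}, and the two new cusp expansions at $\pm\infty$ of the strip where the Neumann reflection in \eqref{Formula: G_DN^S = ...} produces singular contributions absent for $\Gamma_D$. Once these three local consistency conditions are written in a common normalization for $j^+$ and $\beta_1$, the sole simultaneous solution should emerge as $(\kappa,\nu)=(4,0)$.
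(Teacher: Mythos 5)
Your setup is right---strip chart, Lemma \ref{Lemma: eta structure}, and the observation that \eqref{Formula: L_sigma Gamma = 0} is automatic because $\sigma^{\SSS}$ is constant and $\Gamma_{DN}^{\SSS}$ depends only on differences---and this is also how the paper begins. But the step you call decisive would not produce the conditions you need, and in fact you have inverted which equation does the work. The Hadamard identity \eqref{Formula: L Gamma^++ + L sigma^+ L sigma^+ = e + e} places \emph{no} constraint on $(\kappa,\nu,\mu)$ here: since ${\Gamma^{++}_{DN}}^{\SSS}$ and ${\Gamma^{+-}_{DN}}^{\SSS}$ are functions of $z-w$ and $z-\bar w$ alone, the constant (hence $\nu$-dependent) part of $\delta^{\SSS}(z)=4\cth\frac z2-2\nu$ cancels identically in $\Lc_{\delta}\Gamma_{DN}$, and a short computation gives $\Lc_{\delta}{\Gamma^{++}_{DN}}^{\SSS}(z,w)=1/(\sh\frac z2\,\sh\frac w2)$ with no singular behaviour or ``boundary residues'' at $\pm\infty$. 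Moreover $\mu$ cannot appear in this identity at all, because in the strip chart $\sigma'\equiv 0$, so $j^{+}=\Lc_{\sigma}\eta^{+}=\sigma\,\de\eta^{+}$ carries no $\mu$-term. The only output of the Hadamard identity is the exact form ${j^+}^{\SSS}(z)=-i/\sh\frac z2+i\alpha$, valid for every $\kappa$ and every $\nu$; your two ``independent linear conditions on $(\nu,\chi)$'' do not exist.

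The actual obstruction sits in the equation you relegate to a final check, namely \eqref{Formula: delta/sigma j + mu [sigma,delta]/sigma + 1/2 L j = ibeta}. Substituting the $j^{+}$ forced by the Hadamard identity into it yields (after clearing $\sh^2\frac z2$) a finite linear combination of $1$, $\sh\frac z2$, $\sh^2\frac z2$, $\ch\frac z2$ and $\ch\frac z2\,\sh\frac z2$ whose identical vanishing forces $\kappa=4$, $\nu=0$, $\beta=0$ and $\mu=0$ simultaneously; this is precisely the displayed computation in the paper's proof of Proposition \ref{Proposition: D-N coupling}. So the correct logical order is: the modified Green's function changes $j^{+}$ from $-i/z+i\alpha$ (Dirichlet, half-plane chart) to $-i/\sh\frac z2+i\alpha$ (strip chart), and it is the \emph{compatibility of this new $j^{+}$ with the drift equation for $\eta$} that fails unless $\kappa=4$ and $\nu=0$. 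Your high-level intuition that the Dirichlet--Neumann boundary condition is the source of the rigidity is correct, but as written your argument would terminate with no conditions at the step where you expect to extract them, and would then dismiss as an identity the one equation that actually carries the proof.
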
 
 
\begin{proof}
We use Lemma 
\ref{Lemma: eta structure} 
in the strip chart. 
From 
(\ref{Formula: G_d^S = ...})
we obtain that
\begin{equation*}
 {\Gamma^{++}_{DN}}^{\SSS}(z,w) = -\frac12 \log \th \frac{z-w}{4},\quad
 {\Gamma^{+-}_{DN}}^{\SSS}(z,\bar w) = -\frac12 \log \th \frac{z-\bar w}{4},
\end{equation*}
and relations
(\ref{Formula: L Gamma^++ = 0, L Gamma^+- = 0})
hold. From
(\ref{Formula: L Gamma^++ + L sigma^+ L sigma^+ = e + e})
we find that
\begin{equation}\label{xxx}
 {j^+}^{\SSS}(z) = \frac{-i}{\sh\frac{z}{2}} + i \alpha,\quad \alpha\in\mathbb{R}.
\end{equation}
Substituting \eqref{xxx} in 
(\ref{Formula: delta/sigma j + mu [sigma,delta]/sigma + 1/2 L j = ibeta})
gives
\begin{equation*}
 -i \frac{ 
  \left( \beta \sqrt{\kappa} - \alpha \nu \right) \sh^2\frac{z}{2} 
  + \nu \sh\frac{z}{2}
  - 2 i \sqrt{\kappa} \mu
  \ch\frac{z}{2} \left( 4 \sh\frac{z}{2} \alpha + (\kappa -4) \right)
  }{2 \sqrt{\kappa} \sh^2\frac{z}{2} }
 \equiv 0,
\end{equation*}
which is possible only for $\kappa=4$, $\nu=0$, $\beta=0$ and $\mu=0$,
where the latter agrees with (\ref{Formula: mu = i pm (-4 pm k)/4/k}).
\end{proof}

From
(\ref{Formula: j^+ = L eta^+})
we obtain that
\begin{equation*}
 {\eta^+}^{\SSS}(z) = \frac{i}{2} \log \th \frac{z}{4}+C^+,
\end{equation*}
and
\begin{equation*}
 \eta^{\SSS}(z) = - 2\arg \,\rm{cth}\, \frac{z}{4} + C.
\end{equation*}

We also present here the relations in the half-plane chart 
\begin{equation*}
 \eta^{\HH}(z) 
 = - 2\arg \frac{z}{1+\sqrt{1-z^2}} + C,  
\end{equation*}
\begin{equation*}
 \Gamma_{DN}^{\HH}(z)
 = -\frac12 \log \frac{(z-w)(\bar z-\bar w)
  (1-\bar z w+\sqrt{1-\bar z^2}\sqrt{1-w^2})(1- z \bar w+\sqrt{1- z^2}\sqrt{1-\bar w^2})}
  {(\bar z-w)(z-\bar w)
  (1-zw+\sqrt{1-z^2}\sqrt{1-w^2})(1-\bar z \bar w+\sqrt{1-\bar z^2}\sqrt{1-\bar w^2})}.
\end{equation*}

The pre-pre-Schwarzian $\eta$ is scalar in this case, and in its chart-independent form is
\begin{equation*}
 \eta = - \arg 
 \frac{\frac{\sqrt{\kappa}}{2} (\delta - \frac{\nu}{\kappa}\sigma) + 
 \sqrt{\frac{\kappa}{4} (\delta - \frac{\nu}{\kappa}\sigma) ^2 - \sigma^2} }{\sigma}.
\end{equation*}

\section{Coupling of twisted GFF}
\label{Section: Coupling to twisted GFF}

This model is  similar to the previous one.  As it will be shown below,  it is enough at the algebraic level to replace formally  all hyperbolic functions in the dipolar case in the strip chart $\SSS$ by the corresponding trigonometric functions in order to obtain the relations for the radial SLE in the logarithmic chart $\LL$. But at the analytic level, we have to consider the correlation functions which are doubly defined on $\Dc$ and change their sign after the analytic continuation about the center point. 
This construction was considered before as we were informed by  Num-Gyu Kang, \cite{Kang_preparation}.

We have to generalize slightly the general approach similarly to Section 
\ref{Section: Radial SLE with drift}
  considering the double cover   
$\Dc_b^{\pm}$ instead of the infinitely ramified cover of $\Dc\setminus\{b\}$.
Let us define the space $\Hc_s[\Dc_b^{\pm}]$
of test functions $f\colon \Dc_b^{\pm}\map\mathbb{R}$
as in  Section \ref{Section: Test function} 
with $\phi^{\LL}(z)\equiv 0$ and
with an extra condition
$f(z_1)=-f(z_2)$, where $z_1$ and $z_2$ are two points of $\Dc^{\pm}_b$ corresponding to the same point of $\Dc\setminus\{b\}$. Thus, in the logarithmic chart, we have
\begin{equation}
 f^{\LL}(z)=f^{\LL}(z+4\pi k) = - f^{\LL}(z+ 2\pi k),\quad k\in \mathbb{Z},\quad z\in\HH. 
 \label{Formula: f = -f  = f}
\end{equation}
Such  functions are $4\pi$-periodic and $2\pi$-antiperiodic. In particular,
$f^{\LL}$ is not of compact support, but we require in addition that 
\begin{equation*}
	\sup\Im(\{z\in \HH\colon f^{\LL}(z)\neq 0\}) < \infty
\end{equation*}
in order to maintain compatibility.
In some sense, the `value' of $\Phi_{tv}$ changes its sign after  horizontal translation by $\pi$.

The {\it twisted Gaussian free field} $\Phi_{\text{tw}}$ is defined similarly to the usual one but taking values in 
$\Dc^{\pm}_b{}'$. 

In this section, we define $\Gamma$ by
\begin{equation}
 \Gamma_{\text{tw}}^{\LL}(z,w)= -\frac12\log \frac{
  \rm{tg}\frac{z-w}{4} \rm{tg}\frac{\bar z-\bar w}{4}}
  {\rm{tg}\frac{\bar z-w}{4} \rm{tg}\frac{z -\bar w}{4}},\quad
  z,w\in \HH.
 \label{Formula: G_d^S = ...}
\end{equation}
in the logarithmic chart. Observe that 
\begin{equation*}
 \Gamma_{\text{tw}}^{\LL}(z,w) = \Gamma_{\text{tw}}^{\LL}(z+4\pi k,w) =
  -\Gamma_{\text{tw}}^{\LL}(z+2\pi k,w),\quad k\in \mathbb{Z}.
\end{equation*}

In the unit disk chart the covariance $\Gamma_{\text{tw}}^{\D}$ admits  the form
\begin{equation*}
 \Gamma^{\D}_{\text{tw}}(z,w) = -\frac12\log \frac
  {(\sqrt{z}-\sqrt{w})(\sqrt{\bar z}-\sqrt{\bar w})(\sqrt{z}+\sqrt{\bar w})(\sqrt{\bar z}-\sqrt{w})}
  {(\sqrt{z}+\sqrt{w})(\sqrt{\bar z}+\sqrt{\bar w})(\sqrt{z}-\sqrt{\bar w})(\sqrt{\bar z}+\sqrt{w})}, 
\end{equation*}
or in the half-plane chart, 
\begin{equation*}
 \Gamma_{\text{tw}}^{\HH}(z)
 = -\frac12 \log \frac{(z-w)(\bar z-\bar w)
  (1+\bar z w+\sqrt{1+\bar z^2}\sqrt{1+w^2})(1+ z \bar w+\sqrt{1+ z^2}\sqrt{1+\bar w^2})}
  {(\bar z-w)(z-\bar w)
  (1+zw+\sqrt{1+z^2}\sqrt{1+w^2})(1+\bar z \bar w+\sqrt{1+\bar z^2}\sqrt{1+\bar w^2})}.
\end{equation*}
It is doubly defined because of the square root, and the analytic continuation about the center changes its sign. 

The covariance $\Gamma_{\text{tw}}^{\LL}$ satisfies  the Dirichlet boundary conditions and tends to zero as one of  the variables tends to the center point $b$ (or $\infty$ in the $\LL$ chart) 
\begin{equation*}
 \left.\Gamma_{\text{tw}}^{\LL}(x,w)\right|_{x\in \mathbb{R}}=0,\quad
 \lim\limits_{y\map +\infty} \Gamma_{\text{tw}}^{\LL}(x+iy,w) = 0,\quad x\in \mathbb{R}, \quad w\in \HH;
\end{equation*}
\begin{equation*}
 \left.\Gamma_{\text{tw}}^{\D}(z,w)\right|_{|z| = 1 }=0,\quad
 \lim\limits_{z\map 0} \Gamma_{\text{tw}}^{\D}(z,w) = 0, \quad w\in \D.
\end{equation*}

The $\sigma$-symmetry property
\begin{equation*}
 \Lc_{\sigma_r} \Gamma_{\text{tw}}(z,w)=0
\end{equation*}
holds.

As we will see below, $\eta$ also possesses  property
(\ref{Formula: f = -f  = f}). Thus, the construction of the level (flow) lines can be performed for  both layers simultaneously and the lines will be identical. In particular, this means that the line can turn around the central point and appears in the second layer but can not intersect itself. This agrees with the property of the SLE slit which evoids self-intersections.

Similarly to the dipolar case in the previous section the following proposition can be proved.
\begin{proposition}
Let the vector fields $\delta$ and $\sigma$ be as in
(\ref{Formula: delta nad sigam radial in H}), let
$\Gamma=\Gamma_{\text{tw}}$, and let $\eta$ be a pre-pre-Schwarzian. Then the  coupling is possible only
 for $\kappa=4$ and $\nu=0$. 
\end{proposition}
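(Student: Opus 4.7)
The plan is to mirror the proof of Proposition \ref{Proposition: D-N coupling}, exploiting the remark at the start of this section that in the logarithmic chart $\LL$ the radial SLE plays the role that the dipolar SLE plays in the strip chart $\SSS$, with all hyperbolic functions replaced by their trigonometric counterparts.

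First I would invoke Theorem \ref{Theorem: The coupling theorem} to reduce coupling to the system \eqref{Formula: L eta + 1/2 L^2 eta}--\eqref{Formula: L_sigma Gamma = 0}, and then Lemma \ref{Lemma: eta structure} to further reduce it to \eqref{Formula: delta/sigma j + mu [sigma,delta]/sigma + 1/2 L j = ibeta}, \eqref{Formula: L Gamma^++ + L sigma^+ L sigma^+ = e + e}, and \eqref{Formula: L Gamma^++ = 0, L Gamma^+- = 0}. Working in the chart $\LL$, where $\sigma_r^{\LL}(z)=-2\sqrt{\kappa}$ is constant and $\delta_r^{\LL}$ is the trigonometric analog of the dipolar $\delta_d^{\SSS}$, the explicit form of $\Gamma_{\text{tw}}^{\LL}$ in \eqref{Formula: G_d^S = ...} yields the holomorphic decomposition
\begin{equation*}
  {\Gamma^{++}_{\text{tw}}}^{\LL}(z,w) = -\frac12 \log \tan \frac{z-w}{4},\quad
  {\Gamma^{+-}_{\text{tw}}}^{\LL}(z,\bar w) = -\frac12 \log \tan \frac{z-\bar w}{4}.
\end{equation*}
Since $\sigma_r^{\LL}$ is constant, $\Lc_{\sigma}$ is simply a horizontal translation, and because both kernels depend only on the differences $z-w$ and $z-\bar w$, the identities \eqref{Formula: L Gamma^++ = 0, L Gamma^+- = 0} hold automatically.

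Second, I would substitute these kernels into Hadamard's equation \eqref{Formula: L Gamma^++ + L sigma^+ L sigma^+ = e + e} and solve for $j^+ = \Lc_{\sigma}\eta^+$. The computation is the trigonometric analog of the one that produced \eqref{xxx}, giving
\begin{equation*}
  {j^+}^{\LL}(z) = \frac{-i}{\sin(z/2)} + i\alpha, \quad \alpha \in \mathbb{R}.
\end{equation*}
Plugging this into the ODE \eqref{Formula: delta/sigma j + mu [sigma,delta]/sigma + 1/2 L j = ibeta} and clearing denominators produces an identity formally identical to the one appearing in the proof of Proposition \ref{Proposition: D-N coupling}, with every $\sh$ and $\ch$ replaced by $\sin$ and $\cos$. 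Matching the coefficients of the independent trigonometric monomials in $z$ forces $\kappa = 4$, $\nu = 0$, $\alpha = 0$, $\beta = 0$, and $\mu = 0$; the last is consistent with \eqref{Formula: mu = i pm (-4 pm k)/4/k} precisely at $\kappa = 4$.

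The main obstacle is not this algebraic matching, which is essentially a direct transcription of Proposition \ref{Proposition: D-N coupling}, but the analytic setup: one must check that the reduction above is legitimate on the doubled configuration space $\Dc^{\pm}_b$. Concretely, I would need to verify that the kernels ${\Gamma^{\pm\pm}_{\text{tw}}}^{\LL}$ and the resulting $\eta^{\LL}$ are compatible with the anti-periodicity \eqref{Formula: f = -f  = f} upon lifting to $\Dc^{\pm}_b$, and that Theorem \ref{Theorem: The coupling theorem} genuinely applies to the twisted field $\Phi_{\text{tw}}$ with the test-function space $\Hc^{\pm}_{s,b}$ in place of $\Hc_s$. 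This reduces to the observation that the derivation of Theorem \ref{Theorem: The coupling theorem} uses only the Gaussian structure and the linearity of $\Phi[f]$ in $f$, both of which survive verbatim on $\Hc^{\pm}_{s,b}$; once this is recorded, the trigonometric substitution above closes the argument.
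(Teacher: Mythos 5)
Your proposal matches the paper's own argument: the paper proves this proposition by stating that the proof in the logarithmic chart repeats the proof of Proposition \ref{Proposition: D-N coupling} verbatim with hyperbolic functions replaced by trigonometric ones, which is exactly the reduction and substitution you carry out, including the kernel decomposition ${\Gamma^{++}_{\text{tw}}}^{\LL}(z,w)=-\frac12\log\tan\frac{z-w}{4}$ and the resulting ${j^+}^{\LL}(z)=\frac{-i}{\sin(z/2)}+i\alpha$. Your additional remarks on the doubled cover $\Dc^{\pm}_b$ and the applicability of Theorem \ref{Theorem: The coupling theorem} to $\Hc^{\pm}_{s,b}$ are consistent with the paper's setup, which states the theorem for any of the nuclear spaces $\Hc_s$, $\Hc_{s,b}$, $\Hc^{\pm}_{s,b}$.
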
 
 
The {\it proof} in the logarithmic chart actually repeats  the proof of Proposition 
\ref{Proposition: D-N coupling}.

We give here the expressions for $\eta$ in the logarithmic, unit-disk and half-plane charts:
\begin{equation*}
 \eta^{\LL}(z) = - 2\arg \tg \frac{4}{z} + C.
\end{equation*}
\begin{equation*}
 {\eta}^{\D}(z) 
 = -2 \arg \frac{1-\sqrt{z}}{1+\sqrt{z}} + C 
 = 4\Im \,\rm{arctgh}\sqrt{z} + C.
\end{equation*}  
\begin{equation*}
 \eta^{\HH}(z) 
 = - 2\arg \frac{z}{1+\sqrt{1+z^2}} + C.
\end{equation*}
From this relation it is clear that $\eta$ is  antiperiodic. 

The pre-pre-Schwarzian $\eta$ is scalar in this case and its chart-independent form becomes
\begin{equation*}
 \eta = - \frac{2}{\sqrt{\kappa}} \arg 
 \frac{\frac{\sqrt{\kappa}}{2} (\delta - \frac{\nu}{\kappa}\sigma) + 
 \sqrt{\frac{\kappa}{4} (\delta - \frac{\nu}{\kappa}\sigma) ^2 + \sigma^2} }{\sigma}.
\end{equation*}

We defined  the linear functional $\Phi_{\text{tw}}$ on the space of antiperiodic functions before, however, such functional can be also defined on the space of  functions with bounded support in the logarithmic chart. Thus, we can use the same space $\Hc_s$ as in Section
\ref{Section: Radial SLE with drift}.

\section*{Perspectives}

\begin{enumerate}[1.]

\item
The coupling with the reverse $(\delta,\sigma)$-SLE can also be established using the same classification as in Table \ref{Table: some simple cases}.

\item
We did not prove in this paper but our experience shows that we listed all possible ways of coupling of GFF  with $(\delta,\sigma)$-SLE if we assume that $\Gamma$ transforms as a scalar, 
see (\ref{Formula: G B(z,w) = B(G(z),G(w))}), and that $\eta$ is a pre-pre-Schwarzian. It would be useful to prove this.

\item 
The pre-pre-Schwarzian rule 
(\ref{Formula: G eta(z) = eta(G(z)) + mu log G'(z) + ...})
is motivated by the local geometry of SLE curves \cite{Sheffield2010}. 
In principle, one can consider alternative rules. 
Moreover, the scalar behaviour of $\Gamma$
can also be relaxed because the harmonic part $H^{\psi}(z,w)$ can transform in many ways. 
Such more general coupling is intrinsic and can be thought of as a generalization of the coupling 
in Sections 
\ref{Section: Coupling to GFF with Dirichlet-Neumann boundary condition}
and
\ref{Section: Coupling to twisted GFF}
for arbitrary $\kappa$.  

\item
We considered only the simplest case of one Gaussian free field. It would be interesting to examine tuples of $\Phi_i,~i=1,2,\dotso n$ which transform into non-trivial combinations
$\tilde \Phi_i = G_i[\Phi_1,\Phi_2,\dotso \Phi_n]$ under  conformal transforms $G$.

\item
The Bochner-Minols Theorem
\ref{Theorem: Bochner-Minols}
suggests to consider not only free fields, but for example, some polynomial combinations 
in the exponential of
(\ref{Formula: GFF chracteristic function 2}).
In particular, the quartic functional corresponds to conformal field theories related 
to 2-to-2 scattering of particles in dimension two. 
\end{enumerate}

\medskip

\noindent
{\bf Acknowledgement.}  The authors gratefully acknowledge many useful and inspiring conversations with Nam-Gyu Kang and Georgy Ivanov.

\appendix

\section{Technical remarks}

In this appendix section, we prove some technical propositions needed in the proof of Theorem 
\ref{Theorem: The coupling theorem}

Consider an It\^o process $\{X_t\}_{t\in[0,+\infty)}$ such that
\begin{equation*}
	\dI X_t = a_t dt + b_t \dI B_t,\quad t\in[0,+\infty),
\end{equation*}
for some continuous processes 
$\{a_t\}_{t\in[0,+\infty)}$
and
$\{b_t\}_{t\in[0,+\infty)}$.
We denote by 
$\{X_{t\wedge T}\}_{t\in[0,+\infty)}$ 
the stopped process by a stopping time $T$. It satisfies the following SDE:
\begin{equation*}
	\dI X_{t\wedge T} = \theta(T-t) a_t dt + \theta(T-t) b_t \dI B_t,\quad
	t\in[0,+\infty),
\end{equation*}
where
\begin{equation*}
	\theta(t) :=
	\begin{cases} 
		0 	&\mbox{if } t\leq 0 \\
		1		&\mbox{if } t>0.
	\end{cases}
\end{equation*}
If $\{X_t\}_{t\in[0,+\infty)}$ is a local martingale ($a_t=0$,
$t\in[0,+\infty)$), then 
$\{X_{t\wedge T}\}_{t\in[0,+\infty)}$ 
is also a local martingale. 

We consider below the stopped processes 
$Y(G_{t\wedge T}) \}_{t\in[0,+\infty)}$ 
instead of 
$\{Y(G_t)\}_{t\in[0,+\infty)}$. 
for some functions 
$Y:\mathscr{G}\map \mathbb{R}$,
and the corresponding It\^o SDE. In order to make the relations
less cluttered, we usually drop the terms `$...\wedge T[f]$' and $\theta(T-t)$.
However, in the places where it is essential to remember  them, e.g., the
proof of Theorem 
\ref{Theorem: The coupling theorem},
we specify the stopping times explicitly.

Define the \emph{diffusion operator}
\index{diffusion operator} 
\index{$\Ac$}
\begin{equation}
  \Ac: = \Lc_{\delta} + \frac12 \Lc_{\sigma}^2.
  \label{Formula: A = L + 1/2 L^2}
\end{equation}
and consider how a regular pre-pre-Schwarzian $\eta$ changes under the random
evolution $G_t$. 
We also define the stopping time $T(x)$, $x\in\bar\Dc$ analogously
to 
\eqref{Formula: T[f] = ...}
using a neighborhood $U(x)$ of single point $x\in\Dc$. 
The functions  
${G^{-1}_t}_* \eta^{\psi}(z)$ 
and 
${G^{-1}_t}_* \Gamma^{\psi}(z,w)$ 
are defined by 
\eqref{Formula: G eta(z) = eta(G(z)) + mu log G'(z) + ...}
and
\eqref{Formula: G B(z,w) = B(G(z),G(w))} 
until the stopping times $T(z)$ and $\min(T(z),T(w))$ respectively.

\begin{proposition} 
Let  $\{G_t\}_{t\in[0,+\infty)}$ be a ($\delta,\sigma$)-SLE.
\begin{enumerate}[1.]
\item 
Let $\eta$ be a regular pre-pre-Schwarzian such that the Lie dereivatives
$\Lc_{\sigma} \eta$, $\Lc_{\delta} \eta$,
and
$\Lc_{\sigma}^2 \eta$ are well-defined. Then
\begin{equation}
  \dI {G^{-1}_{t}}_* \eta^{\psi}(z) = 
  {G_{t}^{-1}}_* \left( \Ac \eta^{\psi}(z) ~dt + \Lc_{\sigma}
  \eta^{\psi}(z) \dI B_t \right).
  \label{Formula: d G^-1 eta  = G A^+ eta dt + G L eta dB}
\end{equation}
\item
Let $\Gamma$ be a scalar 
bilinear functional 
(\eqref{Formula: G B(z,w) = B(G(z),G(w))} holds) 
such that the Lie derivatives
$\Lc_{\sigma} \Gamma$, 
$\Lc_{\delta} \Gamma$,
and
$\Lc_{\sigma}^2 \Gamma$ 
are well-defined.
Then
\begin{equation}
  \dI {G_{t}^{-1}}_* \Gamma^{\psi}(z,w) = 
  {G_{t}^{-1}}_* \left( \Ac \Gamma^{\psi}(z,w) ~dt 
  	+ \Lc_{\sigma} \Gamma^{\psi}(z,w) \dI B_t \right).
  \label{Formula: d G Gamma  = G A Gamma dt + G L Gamma dB}
\end{equation}
\end{enumerate}
\label{Proposition: dI G eta = G A eta dt + G L eta dB}
\end{proposition}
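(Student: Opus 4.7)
The plan is to establish both identities first in Stratonovich form, where the ordinary chain rule applies, and then pass to It\^o form; the $\tfrac12 \Lc_{\sigma}^2$ term in $\Ac$ will arise precisely from the Stratonovich-to-It\^o correction. Fix a chart $\psi$ and a point $z\in D^{\psi}$ (respectively a pair $(z,w)$ with $z\neq w$), and work up to the stopping time $T(z)$ (respectively $\min(T(z),T(w))$), so that $G_t^{\psi}(z)$ remains in a neighbourhood where $\eta^{\psi}$ and $\Gamma^{\psi}$ are smooth and $(G_t^{\psi})'(z)\neq 0$.

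For the pre-pre-Schwarzian statement, I would substitute the definition \eqref{Formula: G eta(z) = eta(G(z)) + mu log G'(z) + ...} of ${G_t^{-1}}_*\eta^{\psi}(z)$ and apply the Stratonovich chain rule directly, using \eqref{Formula: Slit hol stoch flow Strat} together with its spatial derivative
\begin{equation*}
  \dS (G_t^{\psi})'(z) = \delta^{\psi}{}'(G_t^{\psi}(z))\,(G_t^{\psi})'(z)\,dt + \sigma^{\psi}{}'(G_t^{\psi}(z))\,(G_t^{\psi})'(z)\,\dS B_t,
\end{equation*}
obtained by differentiating the SLE equation in $z$. The logarithmic terms contribute $\mu\delta^{\psi}{}'(G_t^{\psi}(z)) + \mu^*\overline{\delta^{\psi}{}'(G_t^{\psi}(z))}$ to the $dt$-coefficient and the analogous expression with $\sigma$ to the $\dS B_t$-coefficient. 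Collecting terms and recognising the definition \eqref{Formula: L eta = v d eta + chi dv} of the pre-pre-Schwarzian Lie derivative, together with the lemma asserting that $\Lc_v\eta$ transforms as a scalar, yields the Stratonovich identity
\begin{equation*}
  \dS {G_t^{-1}}_*\eta^{\psi}(z) = {G_t^{-1}}_*(\Lc_{\delta}\eta)^{\psi}(z)\,dt + {G_t^{-1}}_*(\Lc_{\sigma}\eta)^{\psi}(z)\,\dS B_t.
\end{equation*}

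To pass to It\^o form (see \cite{Gardiner1982}), I add $\tfrac12$ times the $\dS B_t$-coefficient of $\dS {G_t^{-1}}_*(\Lc_{\sigma}\eta)^{\psi}(z)$ to the drift. Since $\Lc_{\sigma}\eta$ is a scalar, the same Stratonovich calculation applied to it produces ${G_t^{-1}}_*(\Lc_{\sigma}^2\eta)^{\psi}(z)$ as that coefficient, and the resulting It\^o drift becomes ${G_t^{-1}}_*((\Lc_{\delta}+\tfrac12\Lc_{\sigma}^2)\eta)^{\psi}(z) = {G_t^{-1}}_*(\Ac\eta)^{\psi}(z)$ by \eqref{Formula: A = L + 1/2 L^2}, giving \eqref{Formula: d G^-1 eta  = G A^+ eta dt + G L eta dB}. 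The bilinear statement \eqref{Formula: d G Gamma  = G A Gamma dt + G L Gamma dB} is handled identically: the same chain rule, applied simultaneously at $z$ and $w$ and without any logarithmic corrections (since $\Gamma$ is a scalar), reproduces the two-point operator \eqref{Formula: L Gamma = ...}, after which the It\^o correction is computed the same way.

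The main technical obstacle is justifying the Stratonovich chain rule for ${G_t^{-1}}_*\eta^{\psi}(z)$ and its bilinear analogue. What is needed is joint smoothness in $(t,z)$ of $G_t^{\psi}(z)$, continuity of $(G_t^{\psi})'(z)$ uniformly on compacts, and integrability of the coefficients appearing in the It\^o conversion; all of these hold on any time interval strictly inside $T(z)$ (respectively $\min(T(z),T(w))$) by the basic properties of slit holomorphic stochastic flows established in \cite{Ivanov2014}, so the formal computation can be made rigorous exactly in the range of times where the statement is to be interpreted.
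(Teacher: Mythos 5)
Your argument is correct, and the mathematical content is the same as the paper's: everything rests on (i) the chain rule applied to $\eta^{\psi}(G_t^{\psi}(z))+\mu\log(G_t^{\psi})'(z)+\mu^*\log\overline{(G_t^{\psi})'(z)}$, (ii) recognising the resulting coefficients as $\Lc_{\delta}\eta$ and $\Lc_{\sigma}\eta$ evaluated at $G_t^{\psi}(z)$, (iii) the lemma that $\Lc_v\eta$ is a scalar, so that evaluation at $G_t^{\psi}(z)$ is the same as applying ${G_t^{-1}}_*$, and (iv) the Stratonovich-to-It\^o conversion producing the $\tfrac12\Lc_{\sigma}^2$ term in $\Ac$. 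The organisation differs, though: the paper first proves an abstract finite-dimensional lemma (Lemma \ref{Lemma: It\^{o} derivative lemma}) expressing the It\^o differential of $F(X_t)$ through the deterministic flows $Y_s,Z_s$ generated by the drift and diffusion coefficients, applies it with $X_t=\{G_t^{\psi}(z)\}$ to get the identity at $t=0$ (where the $s$-derivatives along $H_s[\delta]$ and $H_s[\sigma]$ are by definition the Lie derivatives), and then propagates to general $t$ via the semigroup property $G_t=\tilde G_{t-t_0}\circ G_{t_0}$. You instead compute the Stratonovich differential directly at an arbitrary time $t$, which avoids both the abstract lemma and the Markov-property step; the price is that you must identify the Lie derivatives by hand from formula \eqref{Formula: L eta = v d eta + chi dv} rather than reading them off as flow derivatives, and you must verify separately (as you do, using that $\Lc_{\sigma}\eta$ is a scalar) that the It\^o correction is exactly ${G_t^{-1}}_*\Lc_{\sigma}^2\eta$. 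Both routes are rigorous under the stated smoothness hypotheses; the paper's lemma has the advantage of applying verbatim to the bilinear case and to other tensorial objects by only changing the map $\{\cdot\}$, which is essentially the economy you also achieve by noting that the $\Gamma$ case is the same computation without logarithmic terms.
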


This can be proved by the direct calculation but we show a more
preferable way, which is valid not only for pre-pre-Scwarzians but, for
instance, for vector fields, and even more generally, for assignments
whose transformation rules contain an arbitrary finite number of derivatives at
a finite number of points. To this end let us prove the following lemma.
\begin{lemma}
\label{lemma2}
Let $X^i(t)$ ($i=1,2,\dotso,n$) be a finite collection of stochastic processes
defined by the following system of equations in the Stratonovich form
\begin{equation}\begin{split}
	d^{\rm{S}} X_t^i = \alpha^i(X_t) dt + \beta^i(X_t) d^{\rm{S}}B_t,
	\label{Formula: dX = a X dt + b X dB}
\end{split}\end{equation}
for some fixed functions $\alpha,\beta\colon\, \mathbb{R}^n\map\mathbb{R}^n$.
Let us define $Y_s^i$, $Z_s^i$ as the solution to the initial value problem
\begin{equation}\begin{split}
&\dot Y_s^i = \alpha^i(Y_s), \quad Y_0^i=0, \\
&\dot Z_s^i = \beta^i(Z_s), \quad Z_0^i=0,
\label{Formula: Y_s = ..., Z_s = ...}
\end{split}\end{equation}
in some neighbourhood of $s=0$. Let also
$F\colon\mathbb{R}^n\map\C$
be a twice-differentiable function.
Then,  It\^{o}'s differential of $F(X_t)$ can be written in the following form
\begin{equation}
d^{\rm{It\^{o}}}F(X_t) =
\left.
\frac{\de}{\de s} F(X_t+Y_s) dt +
\frac{\de}{\de s} F(X_t+Z_s) d^{\rm{It\^{o}}}B_t +
\frac12\frac{\de^2}{\de s^2} F(X_t+Z_s) dt
\right|_{s=0}.
\label{Formula: It\^{o} derivative lemma}
\end{equation}
\label{Lemma: It\^{o} derivative lemma}
\end{lemma}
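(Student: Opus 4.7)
The plan is to verify both sides of \eqref{Formula: It\^{o} derivative lemma} by writing each as an explicit It\^{o} differential and matching terms.

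First I would pass from the Stratonovich form \eqref{Formula: dX = a X dt + b X dB} to its It\^{o} form. The standard conversion yields
\begin{equation*}
 d^{\rm{It\^{o}}} X_t^i = \left( \alpha^i(X_t) + \frac12 \sum_{j} \beta^j(X_t) \de_j \beta^i(X_t) \right) dt + \beta^i(X_t) d^{\rm{It\^{o}}} B_t.
\end{equation*}
Applying the multivariate It\^{o} formula to $F\in C^2(\mathbb{R}^n)$ then gives
\begin{equation*}
 d^{\rm{It\^{o}}} F(X_t) = \sum_i \de_i F\, \alpha^i \, dt + \sum_i \de_i F\, \beta^i\, d^{\rm{It\^{o}}} B_t + \frac12 \sum_{i,j} \left( \de_i \de_j F\, \beta^i \beta^j + \de_i F\, \beta^j \de_j \beta^i \right) dt,
\end{equation*}
where all functions are evaluated at $X_t$.

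Next I would compute the three building blocks on the right-hand side of \eqref{Formula: It\^{o} derivative lemma}. The ODE system \eqref{Formula: Y_s = ..., Z_s = ...} should be read as defining the deterministic integral curves of the vector fields $\alpha$, $\beta$ anchored at the base point $X_t$, so that $s\mapsto X_t+Y_s$ and $s\mapsto X_t+Z_s$ are the flows $\Phi_s^{\alpha}(X_t)$ and $\Phi_s^{\beta}(X_t)$. The chain rule then immediately gives
\begin{equation*}
 \left.\frac{\de}{\de s} F(X_t + Y_s)\right|_{s=0} = \sum_i \de_i F(X_t)\, \alpha^i(X_t), \qquad \left.\frac{\de}{\de s} F(X_t + Z_s)\right|_{s=0} = \sum_i \de_i F(X_t)\, \beta^i(X_t).
\end{equation*}
Differentiating the second expression once more in $s$ and using the $\beta$-flow equation to get $\ddot Z_s^i|_{s=0} = \sum_j \beta^j(X_t) \de_j \beta^i(X_t)$, I obtain
\begin{equation*}
 \left.\frac12 \frac{\de^2}{\de s^2} F(X_t + Z_s)\right|_{s=0} = \frac12 \sum_{i,j} \left( \de_i \de_j F\, \beta^i \beta^j + \de_i F\, \beta^j \de_j \beta^i \right)(X_t).
\end{equation*}

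Term-by-term comparison with the It\^{o} expansion of $d^{\rm{It\^{o}}} F(X_t)$ above then establishes the identity. The only real subtlety is the notational one: the addition $X_t + Y_s$ must be read as the flow of $\alpha$ through $X_t$ (and similarly for $Z_s$), otherwise the drift term would be $\de_i F(X_t)\alpha^i(0)$ rather than $\de_i F(X_t)\alpha^i(X_t)$ and the identity would fail. Once this point of interpretation is pinned down, the calculation is purely mechanical; no genuinely new stochastic input appears beyond the Stratonovich--It\^{o} correction, which is precisely what produces the cross term $\de_i F\,\beta^j \de_j \beta^i$ absorbed into the $\tfrac12 \de_s^2$ expression on the right.
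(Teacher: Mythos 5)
Your proof is correct and follows essentially the same route as the paper's: a direct chain-rule computation of the three terms on the right-hand side of the claimed identity, matched against the It\^o differential of $F(X_t)$ obtained from the Stratonovich-to-It\^o conversion and the multivariate It\^o formula. Your observation that $X_t+Y_s$ and $X_t+Z_s$ must be read as the integral curves of $\alpha$ and $\beta$ anchored at $X_t$ (taking the stated initial value problems literally would produce $\de_i F(X_t)\,\alpha^i(0)$ in the drift) is precisely the reading the paper uses implicitly when it later applies the lemma with $Y_s=\{H_s[\delta]^{\psi}(z)-z\}$, so this is a legitimate clarification rather than a gap in your argument.
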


\begin{proof}
The direct calculation of the right-hand side of
(\ref{Formula: It\^{o} derivative lemma})
gives
\begin{equation*}\begin{split}
  &
  F'_i(X_t) \left(\alpha^i(X_t) + \frac12 {\beta'_j}^i(X_t)\beta^j(X_t) \right) dt +
  F'_i(X_t) \beta^i(X_t) d^{\rm{It\^{o}}}B_t +
  \frac12 F''_{ij}(X_t) \beta^i(X_t)\beta^j(X_t) dt,
  \end{split}\end{equation*}
which is indeed  It\^{o}'s differential of $F(X_t)$. We employed summation
over repeated indices and used the notation 
$F'_i(X):=\frac{\de}{\de X^i}F(X)$.
\end{proof}

\medskip

\noindent
{\it Proof of  Proposition \ref{Proposition: dI G eta = G A eta dt + G L eta dB}.}
We use the lemma above.
Let $n=4$, and let  us define a vector valued linear map
$\{ \cdot \}$ for an analytic function $x(z)$ as
\begin{equation}
 \{ x(z) \} : = \{ \Re x(z),\Im x(z),\Re x'(z),\Im x'(z) \}.
 \label{Formula: [x] :=  [Re,Im,Re',Im']}
\end{equation}
For example,
\begin{equation*}
 X_t := \{ G_t^{\psi}(z) \} =
  \{ \Re G_t^{\psi}(z), \Im G_t^{\psi}(z), \Re {G_t^{\psi}}'(z), \Im {G_t^{\psi}}'(z) \}.
\end{equation*}
From
\eqref{Formula: Slit hol stoch flow Strat}
we have
\begin{equation*}
 \alpha(X_t) = \{ \delta^{\psi}(G_t^{\psi}(z)) \},\quad
 \beta(X_t) = \{ \sigma^{\psi}(G_t^{\psi}(z)) \}.
\end{equation*}
Let also
\begin{equation*}\begin{split}
	F(X_t) =& F(\{{G_{t}^{\psi}}(z) \}):=
	{G_{t}^{-1}}_* \eta^{\psi}(z) 
	=\\=& 
	\eta^{\psi}(G_{t}^{\psi}(z)) 
	+ \mu \log {G_{t}^{\psi}}'(z) 
	+ \mu^* \log \overline{{G_{t}^{\psi}}'(z)}.
\end{split}\end{equation*}
Then
\begin{equation*}
 Y_s = \{ H_s[\delta]^{\psi}(z)-z \},\quad
 Z_s = \{ H_s[\sigma]^{\psi}(z)-z \}
\end{equation*}
due to
(\ref{Formula: d H = sigma H ds}),
(\ref{Formula: Slit hol stoch flow Strat}),
(\ref{Formula: dX = a X dt + b X dB}), and
(\ref{Formula: Y_s = ..., Z_s = ...}).

Now we can use Lemma~\ref{lemma2} in order to obtain (\ref{Formula: d G^-1 eta  = G A^+ eta dt + G L eta dB}) for $t=0$:
\begin{equation}\begin{split}
 &\left. \dI {G_{t}^{-1}}_* \eta^{\psi}(z) \right|_{t=0} =
 \left. \dI F[X_t] \right|_{t=0} 
 =\\=&
 \text{ (right-hand side of (\ref{Formula: It\^{o} derivative lemma}) with $t=0$
 ) }.
 \label{Formula: d G eta |_t=0 = ...}
\end{split}\end{equation}
But
\begin{equation*}\begin{split}
 &\left. \frac{\de}{\de s} F(X_t+Y_s) \right|_{s=0,t=0} =
 \left. \frac{\de}{\de s} F( \{z+ H_s[\delta]^{\psi}(z) - z \}) \right|_{s=0}
 =\\=&
 \left. \frac{\de}{\de s} F( \{H_s[\delta]^{\psi}(z)\}) \right|_{s=0}
 =\left. \frac{\de}{\de s} \{H_s[\delta]^{-1}_* \eta^{\psi}(z) \right|_{s=0}
 = \Lc_{\delta} \eta^{\psi}(z).
\end{split}\end{equation*}
A similar observation for other terms in
(\ref{Formula: d G eta |_t=0 = ...})
implies that
\begin{equation*}\begin{split}
 \left. \dI {G_t^{-1}}_* \eta^{\psi}(z) \right|_{t=0} =&
 \Lc_{\delta} \eta^{\psi}(z) dt
 + \Lc_{\sigma} \eta^{\psi}(z) \dI B_t
 + \frac12 \Lc_{\sigma}^2 \eta^{\psi}(z) dt
 =\\=&
 \Ac \eta^{\psi}(z) dt + \Lc_{\sigma} \eta^{\psi}(z) \dI B_t.
\end{split}\end{equation*}
For $t>0$ we  conclude that
\begin{equation*}\begin{split}
 &\dI {G_t^{-1}}_* \eta^{\psi}(z) =
 \dI \left( \tilde G_{t-t_0} \circ G_{t_0} \right)^{-1}_*  \eta^{\psi}(z) =
 \dI \left. {G_{t_0}^{-1}}_* { \tilde G_{t-t_0}^{-1}\,  }_* \eta^{\psi}(z) \right|_{t_0=t}
 =\\=&
 \left. {G_{t}^{-1}}_* \dI  { \tilde G_{s}^{-1}\,  }_* \eta^{\psi}(z) \right|_{s=0} =
 {G_{t}^{-1}}_* \left( \Ac \eta^{\psi}(z) dt + \Lc_{\sigma} \eta^{\psi}(z) \dI B_t \right).
\end{split}\end{equation*}

The proof of 
\ref{Formula: d G Gamma  = G A Gamma dt + G L Gamma dB}
is analogous. The only difference is that we do not have
the pre-pre-Schwarzian terms with the derivatives but there are two points $z$
and $w$. We can assume
\begin{equation*}
  \{ x \} : = \{ \Re x(z),\Im x(z),\Re x(w),\Im x(w) \}
\end{equation*}
instead of
(\ref{Formula: [x] :=  [Re,Im,Re',Im']})
and the remaining part of the proof is the same.
\quad\qed
\medskip

We will obtain below the It\^o differential of
${G_t^{-1}}_*\eta[f]$ 
and 
${G_t^{-1}}_* \Gamma [f,g]$ 
for 
($\delta,\sigma$)-SLE
$\{G_t\}_{t\in[0,+\infty)}$
and
$f,g\in\Hc$.  
To this end we need the It\^o formula for nonlinear functionals over $\Hc$.
For linear functionals on the Schwartz space this has been shown in
\cite{Krylov2009}.
However, the authors are not aware of similar results for nonlinear
functionals.
The following propositions are special cases required for this paper. They are consequences of the proposition above, the classical It\^o 
formula, and the stochastic Fubini theorem.

\begin{proposition} 
Under the conditions of Proposition
\ref{Proposition: dI G eta = G A eta dt + G L eta dB}
the following holds:
\begin{enumerate}[1.]
\item
The It\^{o} differential is
interchangeable with the integration over $\Dc$. Namely,
\begin{equation}\begin{split}
  &\dI \int\limits_{\psi(\supp f)} {G^{-1}_{t}}_*
  \eta^{\psi}(z) f^{\psi}(z) l(dz) 
  =\\=&
  \int\limits_{\psi(\supp f)} {G^{-1}_{t}}_* \Ac
  \eta^{\psi}(z) f^{\psi}(z) l(dz)\, dt 
  +\\+& 
  \int\limits_{\psi(\supp f)} 
  	{G^{-1}_{t}}_* \Lc_{\sigma} \eta^{\psi}(z) f^{\psi}(z)
  	 l(dz) \,\dI B_t.
  \label{Formula: dI int G^-1 eta = int dI G^-1 eta}
\end{split}\end{equation}
An equivalent shorter formulation is
\begin{equation}\begin{split}
  \dI {G_{t }^{-1}}_* \eta[f]
  = {G^{-1}_t}_* \Ac \eta[f] dt +
  {G^{-1}_t}_* \Lc_{\sigma} \eta[f] \dI B_t
  \label{Formula: dI G^-1 eta[f] = G^-1 A eta[f] dt + G^-1 L eta[f] dB}.
\end{split}\end{equation}
\item 
The It\^{o} differential is interchangeable with the double integration
over $\Dc$, namely,
\begin{equation}\begin{split}
  &\dI \int\limits_{\psi(\supp f)} 
  \int\limits_{\psi(\supp f)} {G^{-1}_{t}}_* \Gamma (x,y)
  f^{\psi}(x)f^{\psi}(y)  l(dx) l(dy) =\\=&
  \int\limits_{\psi(\supp f)} \int\limits_{\psi(\supp f)}
  {G^{-1}_{t}}_* \Ac \Gamma (x,y)
  f^{\psi}(x) f^{\psi}(y) l(dx) l(dy)\, dt
  +\\+&
  \int\limits_{\psi(\supp f)} \int\limits_{\psi(\supp f)}
  {G^{-1}_{t}}_* \Lc_{\sigma} \Gamma(x,y)
  f^{\psi}(x) f^{\psi}(y) l(dx) l(dy)\, \dI B_t.
  \label{Formula: dI int G^-1 Gamma = int dI G^-1 Gamma}
\end{split}\end{equation}
 An equivalent  shorter formulation is
 \begin{equation*}\begin{split}
  \dI {G_t^{-1}}_* \Gamma[f,g]
  =
  {G^{-1}_t}_* \Ac \Gamma[f,g]\, dt +
  {G^{-1}_t}_* \Lc_{\sigma} \Gamma[f,g]\, \dI B_t.
 \end{split}\end{equation*}
\end{enumerate}
\label{Proposition: dI int eta = int dI eta}
\end{proposition}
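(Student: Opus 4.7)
The plan is to derive the two identities by combining Proposition~\ref{Proposition: dI G eta = G A eta dt + G L eta dB} (which gives the pointwise Itô decomposition of ${G_t^{-1}}_*\eta^{\psi}(z)$ and ${G_t^{-1}}_*\Gamma^{\psi}(z,w)$) with a stochastic Fubini argument that interchanges the spatial Lebesgue integration against $f^{\psi}$ with the Itô integration against $dB_t$. Throughout we work with the stopped flow $G_{t\wedge T[f]}$, where $T[f]$ is defined in \eqref{Formula: T[f] = ...}; the stopping is the essential hypothesis that makes the spatial integrand well-defined and bounded.

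First I would fix a chart $\psi$ and an $f\in\Hc$ with $K:=\psi(\supp f)$. By construction of $T[f]$ there is an open neighbourhood $U$ of $K$ in $\psi(\Dc)$ with $\overline{U}$ compact and $\overline{U}\cap \psi(\K_{t\wedge T[f]})=\emptyset$ for all $t\geq 0$. Consequently $(t,z)\mapsto G_{t\wedge T[f]}^{\psi}(z)$, together with any finite number of its $z$-derivatives, is jointly continuous on $[0,\infty)\times \overline{U}$ and takes values in a compact subset of $\psi(\Dc)$ a.s.\ on each finite time interval. Applying \eqref{Formula: d G^-1 eta  = G A^+ eta dt + G L eta dB} pointwise in $z\in K$, the processes
\begin{equation*}
  a_t(z):=\theta(T[f]-t){G_{t}^{-1}}_*\Ac\eta^{\psi}(z),\qquad
  b_t(z):=\theta(T[f]-t){G_{t}^{-1}}_*\Lc_\sigma\eta^{\psi}(z)
\end{equation*}
are then, a.s., continuous and uniformly bounded on $[0,t]\times K$ by some random constant $C(\omega,t)$. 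The same boundedness holds for the primitive ${G_{t\wedge T[f]}^{-1}}_*\eta^{\psi}(z)$ itself, which is continuous in $t$ for each $z$.

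The second step is the stochastic Fubini theorem, in the form found, e.g., in Protter or Revuz--Yor. The integrability conditions reduce to
\begin{equation*}
  \int_0^t\!\int_K |a_s(z)|\,|f^{\psi}(z)|\,l(dz)\,ds<\infty\quad\text{a.s.},\qquad
  \int_0^t\!\int_K b_s(z)^2 f^{\psi}(z)^2 l(dz)\,ds<\infty\quad\text{a.s.},
\end{equation*}
both of which follow from the uniform bound above and the compactness of $K$. Applying stochastic Fubini to the pointwise identity
\begin{equation*}
  {G_{t\wedge T[f]}^{-1}}_*\eta^{\psi}(z)-\eta^{\psi}(z)
  =\int_0^t a_s(z)\,ds+\int_0^t b_s(z)\,\dI B_s
\end{equation*}
integrated against $f^{\psi}(z)\,l(dz)$ gives \eqref{Formula: dI int G^-1 eta = int dI G^-1 eta}, which is the integrated form of \eqref{Formula: dI G^-1 eta[f] = G^-1 A eta[f] dt + G^-1 L eta[f] dB}. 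The $\Gamma$-case is identical: one replaces the single-point pushforward \eqref{Formula: G eta(z) = eta(G(z)) + mu log G'(z) + ...} by the two-point rule \eqref{Formula: G B(z,w) = B(G(z),G(w))}, applies \eqref{Formula: d G Gamma  = G A Gamma dt + G L Gamma dB} pointwise in $(z,w)$, and uses stochastic Fubini on $K\times K$, where the logarithmic singularity of $\Gamma$ along the diagonal is integrable against the smooth compactly supported $(1,1)$-differential $f^{\psi}(z)g^{\psi}(w)$.

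The main technical obstacle is the justification of stochastic Fubini, i.e., establishing the integrability and measurability hypotheses uniformly in $z$ on $\supp f$. The stopping at $T[f]$ is exactly what makes this routine: it keeps the image $G_{t\wedge T[f]}^{\psi}(K)$ in a compact set bounded away from the poles of $\delta$ and $\sigma$, so that $\Ac\eta$, $\Lc_\sigma\eta$, $\Ac\Gamma$, and $\Lc_\sigma\Gamma$ remain smooth there, and their pullbacks are jointly continuous in $(t,z)$. For the $\Gamma$-case an additional small verification is that the diagonal singularity of $\Ac\Gamma$ is $L^1_{\text{loc}}$ against $f\otimes g$ (it is at worst of the form $|z-w|^{-2}$ smoothed by $\de_z\de_{\bar z}$, hence $\delta$-like, paired with a smooth compactly supported density); this is already implicit in the fact that $\Gamma[f,g]$ defines a bilinear functional on $\Hc$.
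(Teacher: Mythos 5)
Your proposal is correct and follows essentially the same route as the paper: apply the pointwise It\^{o} decomposition from Proposition~\ref{Proposition: dI G eta = G A eta dt + G L eta dB} and interchange the It\^{o} and Lebesgue integrals via the stochastic Fubini theorem (the paper cites \cite{Protter2004a} for this). Your additional verification of the integrability hypotheses via the stopping time $T[f]$ and compactness of $\supp f$, and your remark on the integrability of the diagonal singularity of $\Gamma$, are details the paper leaves implicit but do not change the argument.
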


\begin{proof}
The relation
(\ref{Formula: dI int G^-1 eta = int dI G^-1 eta})
in the integral form becomes
\begin{equation*}\begin{split}
  &\int\limits_{\psi(\supp f)} 
  	{G^{-1}_t}_* \eta^{\psi} (z) f^{\psi}(z) l(dz)
  =\eta[f]  
  +\\+&
  \int\limits_0^{t}
   \int\limits_{\psi(\supp f)} 
   {G^{-1}_{\tau}}_* \Ac \eta^{\psi} (z)  f^{\psi}(z) l(dz) d{\tau} +
  \int\limits_0^{t}
  \int\limits_{\psi(\supp f)} 
  	{G^{-1}_{\tau}}_* \Lc_{\sigma} \eta^{\psi}(z) f^{\psi}(z) l(dz) \dI
  B_{\tau}
\end{split}\end{equation*}
The order of the It\^{o} and the Lebesgue integrals can be changed using the
stochastic Fubini theorem, see, for example \cite{Protter2004a}.
It is enough now to use
(\ref{Formula: d G^-1 eta  = G A^+ eta dt + G L eta dB})  to obtain
(\ref{Formula: dI int G^-1 eta = int dI G^-1 eta}).

The proof of 
\ref{Formula: dI G^-1 eta[f] = G^-1 A eta[f] dt + G^-1 L eta[f] dB}
is analogous.
\end{proof}

\begin{proposition}
\label{Proposition: G chi[f] is an It\^{o} process}
Let
\begin{equation*}
 \hat\phi[f] = \exp \left( W[f] \right),\quad
 W[f] := \frac12 \Gamma[f,f] + \eta[f].
\end{equation*}
Then
${G_t^{-1}}_* \hat\phi[f]$  is an It\^{o} process defined by the integral
\begin{equation}\begin{split}
 &{G_t^{-1}}_* \hat\phi[f] 
 =\\=&
 \int\limits_0^t
 \exp \left( {G_{\tau}^{-1}}_* W[f] \right)
 \left(
  {G_{\tau}^{-1}}_* \Ac W[f] d\tau +
  {G_{\tau}^{-1}}_* \Lc_{\sigma} W[f] \dI B_{\tau} +
  \frac12 \left( {G_{\tau}^{-1}}_* \Lc_{\sigma} W[f] \right)^2 d\tau
 \right).
 \label{Formula: G chi = chi + int ...}
\end{split}\end{equation}
\end{proposition}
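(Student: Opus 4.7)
The plan is to reduce the claim to the classical one-dimensional It\^o formula applied to the exponential, with the internal process ${G_t^{-1}}_* W[f]$ already analyzed in Proposition~\ref{Proposition: dI int eta = int dI eta}. First I would observe that for each fixed test function $f\in\Hc$ and each $t$, the quantity ${G_t^{-1}}_* W[f]$ is a scalar random variable, so $\hat\phi[f]=\exp(W[f])$ evaluated along the stopped flow yields a real-valued semimartingale rather than a truly functional-valued object, which is what makes the classical It\^o machinery applicable.

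Next I would compute the It\^o differential of the inner process. By linearity of the pushforward and of the Lie/diffusion operators, together with parts~1 and~2 of Proposition~\ref{Proposition: dI int eta = int dI eta} applied to $\eta$ and to $\Gamma[f,f]$, one obtains
\begin{equation*}
\dI\, {G_t^{-1}}_* W[f] = {G_t^{-1}}_* \Ac W[f]\, dt + {G_t^{-1}}_* \Lc_{\sigma} W[f]\, \dI B_t,
\end{equation*}
with quadratic variation $d\langle {G^{-1}}_* W[f]\rangle_t = \bigl({G_t^{-1}}_* \Lc_{\sigma} W[f]\bigr)^2 dt$, inherited from the single Brownian driver in \eqref{Formula: Slit hol stoch flow Strat}. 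A minor bookkeeping point is that up to the stopping time $T[f]$ everything is well-defined since the supports stay inside the image of $G_t^{-1}$, which was already the convention in the previous proposition.

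Then I would apply the one-dimensional It\^o formula to $F(x)=e^x$ composed with the semimartingale $X_t := {G_t^{-1}}_* W[f]$. This gives
\begin{equation*}
\dI\, e^{X_t} = e^{X_t}\,\dI X_t + \tfrac{1}{2} e^{X_t}\, d\langle X\rangle_t,
\end{equation*}
and substituting the expressions above yields exactly the integrand of \eqref{Formula: G chi = chi + int ...}. Integrating from $0$ to $t$ and using that $X_0=W[f]$ so that ${G_0^{-1}}_*\hat\phi[f]=e^{W[f]}=\hat\phi[f]$, the constant term $\hat\phi[f]$ cancels against itself and one obtains the stated integral representation with vanishing boundary term at $\tau=0$ (the formula is written for the increment ${G_t^{-1}}_*\hat\phi[f]-\hat\phi[f]$ on the left; this is a minor convention to verify against the displayed equation).

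The main obstacle I anticipate is purely bookkeeping rather than analytic: making sure that the exchange of ${G_t^{-1}}_*$ with $\frac12\Gamma[\cdot,\cdot]+\eta[\cdot]$ is legitimate when one of the summands is quadratic in $f$, since Proposition~\ref{Proposition: dI int eta = int dI eta} treats $\Gamma[f,g]$ with possibly distinct $f,g$. Once one notes that $\Gamma[f,f]$ is obtained by setting $g=f$ and that the stochastic Fubini argument in that proposition goes through unchanged for the double integral, the rest is a direct application of It\^o's formula; no new estimates are required beyond the integrability already assumed to make $\Ac W[f]$ and $\Lc_\sigma W[f]$ well-defined.
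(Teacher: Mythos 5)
Your proposal is correct and follows essentially the same route as the paper: apply Proposition~\ref{Proposition: dI int eta = int dI eta} to obtain the It\^o SDE for the inner process ${G_t^{-1}}_* W[f]$, then apply the classical one-dimensional It\^o formula to $x\mapsto e^{x}$. Your remark about the boundary term is also on point --- as displayed, \eqref{Formula: G chi = chi + int ...} is missing the initial value $\hat\phi[f]$ on the right-hand side (the label name ``G chi = chi + int ...'' indicates this was the intended form), and the paper's own proof, which works at the level of differentials, is consistent with that corrected version.
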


\begin{proof}
The stochastic process
${G_t^{-1}}_* W^{\psi}[f]$
has the integral form
\begin{equation*}\begin{split}
 &{G_t^{-1}}_* W^{\psi}[f] = 
 \frac12 {G_t^{-1}}_* \Gamma^{\psi}[f,f] + {G_t^{-1}}_* \eta^{\psi}[f]
 =\\=&
 \int\limits_{\psi(\supp f)} \int\limits_{\psi(\supp f)}
  {G_t^{-1}}_* \Gamma^{\psi}(z,w) f^{\psi}(z) f^{\psi}(w) l(dz) l(dw)
 +
 \int\limits_{\psi(\supp f)} {G_t^{-1}}_* \eta^{\psi}(z) f^{\psi}(z) l(dz).
\end{split}\end{equation*}
due to  Proposition. 
\ref{Proposition: dI int eta = int dI eta}. 
In  terms of the It\^{o} differentials it becomes
\begin{equation*}
 \dI {G_t^{-1}}_* W^{\psi}[f] =
 {G_{t}^{-1}}_* \Ac W^{\psi}[f] dt +
 {G_{t}^{-1}}_* \Lc_{\sigma} W^{\psi}[f] \dI B_{t}.
\end{equation*}
In order to obtain the exponential function we can just use It\^{o}'s lemma
\begin{equation*}\begin{split}
 &\dI {G_t^{-1}}_* \exp \left( W^{\psi}[f] \right) =
 \dI \exp \left( {G_t^{-1}}_* W^{\psi}[f] \right)
 =\\=&
 \exp \left( {G_{t}^{-1}}_* W^{\psi}[f] \right)
 \left(
  {G_{t}^{-1}}_* \Ac W^{\psi}[f] dt
  + {G_{t}^{-1}}_* \Lc_{\sigma} W^{\psi}[f] \dI B_t
  + \frac12 \left( {G_{t}^{-1}}_* \Lc_{\sigma} W^{\psi}[f] \right)^2 dt
 \right).
\end{split}\end{equation*}
\end{proof}

\section{Some formulas from stochastic calculus}
\label{Appendix: Some relations from stochastic calculus}

We refer to 
\cite{Gardiner1982}
\cite{Oksendal2003}, and
\cite{Protter2004a}
for the definitions and properties of the It\^{o} and Stratonovich calculus and
use the following relation between the  It\^{o} and Stratonovich integrals
\begin{equation*}
 \int\limits_{0}^T F(x_t,t) \dS B_t =
 \int\limits_{0}^T F(x_t,t) \dI B_t 
 + \frac12 \int\limits_{0}^T b_t \de_1 F(x_t,t) dt.
\end{equation*}
The latter item   can also  be expressed in terms of the covariance
\begin{equation}
 \int\limits_{0}^T b_t \de_1 F(x_t,t) dt = \langle F(x_T) ,B_t \rangle.
\end{equation}

In order to obtain 
(\ref{Formula: Slit hol stoch flow It\^{o}})
from
(\ref{Formula: Slit hol stoch flow Strat}), let us
assume 
\begin{equation}
 x_t:=G_t(z),\quad
 b_t:=\sigma(G_t(z)),\quad
 F(x_t,t):=\sigma(x_t)=\sigma(G_t(z)).
\end{equation}
Then
\begin{equation}
 \de_1F(x_t,t) = \sigma'(G_t(z)),
\end{equation}
and
\begin{equation}
 \int\limits_0^T \sigma(G_t(z)) \dS B_t =
 \int\limits_0^T \sigma(G_t(z)) \dI B_t +
 \frac12 \int\limits_0^T \sigma(G_t(z))\sigma'(G_t(z)) dt.
\end{equation}
It is enough now to add $\int\limits_0^T \delta(G_t(z)) dt$ to both parts to obtain the right-hand sides of the integral forms of 
(\ref{Formula: Slit hol stoch flow It\^{o}})
and
(\ref{Formula: Slit hol stoch flow Strat}).

We also use in this paper that 
\begin{equation*}
 \tilde B_{\tilde T} := 
 \int\limits_0^{\tilde T} \dot \lambda^{\frac12}_{\tilde t} \dI B_{\lambda_{\tilde t}}
 = \int\limits_0^{\lambda_{\tilde T}} \dot \lambda^{\frac12}_{\lambda^{-1}_t} \dI B_{t}
\end{equation*}
has the same law as $B_{\tilde T}$ for any monotone and continuously differentiable function 
$\lambda\colon [0,\tilde T]\map[0,T]$.
In differential form this relation becomes
\begin{equation}
 \dI \tilde B_{\tilde t} = \dot \lambda^{\frac12}_{\tilde t} \dI B_{\lambda_{\tilde t}}.
 \label{Formula: dB = lambda dB}
\end{equation}
We need to reformulate relation 
(\ref{Formula: dB = lambda dB})
in the Stratonovich form.
Let now $\lambda$ satisfy
\begin{equation}
 \dS \dot \lambda_{\tilde t} 
 = a_{\tilde t} d \tilde t + b_{\tilde t} \dS \tilde B_{\tilde t}.
 \label{Formula: d lambda = a dt + b dB}
\end{equation}
\begin{equation*}\begin{split}
 \int\limits_0^{\tilde T}  \dS \tilde B_{\tilde t}
 =&\int\limits_0^{\tilde T}  \dI \tilde B_{\tilde t}
 = \int\limits_0^{\lambda_{\tilde T}} \dot \lambda^{\frac12}_{\lambda^{-1}_t} \dI B_{t}
 = \int\limits_0^{\lambda_{\tilde T}} \dot \lambda^{\frac12}_{\lambda^{-1}_t} \dS B_{t}
 - \frac12
 \langle \dot \lambda^{\frac12}_{\tilde T}, B_{\lambda_{\tilde T}}  \rangle
 =\\=& 
 \int\limits_0^{\tilde T} \dot \lambda^{\frac12}_{\tilde t} \dS B_{\lambda_{\tilde t}}
 - \frac12
 \langle 
  \dot \lambda^{\frac12}_{\tilde T}, 
  \int\limits_0^{\tilde T} \lambda^{-\frac12}_{\tilde t} d\tilde B_{\tilde t}
 \rangle
 =
 \int\limits_0^{\tilde T} \dot \lambda^{\frac12}_{\tilde t} \dS B_{\lambda_{\tilde t}}
 - \frac12 \int\limits_0^{\tilde T} 
 \frac12 \dot \lambda^{-\frac12}_{\tilde t} b_{\tilde t} 
 \dot \lambda^{-\frac12}_{\tilde t} d \tilde t
 =\\=&
  \int\limits_0^{\tilde T} \dot \lambda^{\frac12}_{\tilde t} \dS B_{\lambda_{\tilde t}}
 - \frac14 \int\limits_0^{\tilde T} 
 \frac{b_{\tilde t}}{\dot \lambda_{\tilde t} } d \tilde t.
\end{split}\end{equation*}
We conclude that
\begin{equation}
 \dS \tilde B_{\tilde t} = 
 \dot \lambda_{\tilde t}^{\frac12} \dS B_{\lambda_{\tilde t}} - 
 \frac14 \frac{b_{\tilde t}}{\dot \lambda_{\tilde t} } d \tilde t.
 \label{Folrmula: dB = lambda dB - 1/4 b/lambda dt}
\end{equation}

\end{document}